\renewcommand{\epsilon}{\varepsilon}
\newcommand{\newsection}[1]
{\subsection{#1}\setcounter{theorem}{0} \setcounter{equation}{0}
\par\noindent}
\newtheorem{theorem}{Theorem}
\newtheorem{lemma}[theorem]{Lemma}
\newtheorem{corr}[theorem]{Corollary}
\newtheorem{proposition}[theorem]{Proposition}
\newtheorem{deff}[theorem]{Definition}
\newcommand{\bth}{\begin{theorem}}
\newcommand{\ble}{\begin{lemma}}
\newcommand{\bcor}{\begin{corr}}
\newcommand{\bdeff}{\begin{deff}}
\newcommand{\bprop}{\begin{proposition}}
\newcommand{\ele}{\end{lemma}}
\newcommand{\ecor}{\end{corr}}
\newcommand{\edeff}{\end{deff}}
\newcommand{\eprop}{\end{proposition}}
\newcommand{\Rn}{{\mathbb R}^n}
\newcommand{\la}{\lambda}
\newcommand{\e}{\varepsilon}
\renewcommand{\Pi}{\varPi}
\renewcommand{\Re}{\mathrm{Re} \,}
\renewcommand{\Im}{\mathrm{Im} \,}
\renewcommand{\epsilon}{\varepsilon}
\newcommand{\R}{{\mathbb R}}
\newcommand{\Tn}{{\mathbb T}^n}
\newcommand{\Dom}{\mathrm{Dom}}
\newcommand{\1}{{\rm 1\hspace*{-0.4ex}%
\rule{0.1ex}{1.52ex}\hspace*{0.2ex}}}
\begin{document}

\title[Uniform Sobolev Estimates involving singular potentials]
{Uniform Sobolev Estimates on compact manifolds involving   singular potentials}
%
%
%
%
%
%
\keywords{Eigenfunctions, quasimode estimates, uniform Sobolev estimates}
\subjclass[2010]{58J50, 35P15}

\thanks{M.D.B.~was partially supported by NSF Grant DMS-1565436, Y.S.~was partially supported by the Simons Foundation, and X.H. and C.D.S. were partially supported by NSF Grants DMS-1665373 and DMS-1953413}

 \author{Matthew D. Blair}
\address[M.D.B.]{Department of Mathematics and Statistics,
University of New Mexico, Albuquerque, NM 87131,  USA}
\email{blair@math.unm.edu}

\author{Xiaoqi Huang}
\address[X.H.]{Department of Mathematics,  Johns Hopkins University,
Baltimore, MD 21218}
\email{xhuang49@math.jhu.edu}

\author{Yannick Sire}
 \address[Y.S.]{Department of
  Mathematics, Johns Hopkins University, Baltimore, MD 21218, USA}
\email{sire@math.jhu.edu}

\author{Christopher D. Sogge}
\address[C.D.S.]{Department of Mathematics,  Johns Hopkins University,
Baltimore, MD 21218}
\email{sogge@jhu.edu}

\begin{abstract}We obtain generalizations of the uniform Sobolev inequalities of
Kenig, Ruiz and the fourth author~\cite{KRS} for Euclidean spaces and Dos Santos Ferreira, Kenig and Salo~\cite{DKS} for compact Riemannian manifolds
involving critically singular potentials $V\in L^{n/2}$.
We also obtain the analogous improved quasimode estimates of the the first, third and fourth authors \cite{BSS} ,
Hassell and Tacy~\cite{HassellTacy}, the first and fourth author~\cite{SBLog}, and Hickman~\cite{Hickman}
as well as analogues of the improved uniform Sobolev estimates of \cite{BSSY} and \cite{Hickman} involving
such potentials.   Additionally, on $S^n$, we obtain sharp uniform Sobolev inequalities involving such potentials for the optimal range of exponents, which extend the results of S. Huang and the fourth author~\cite{SHSo}. For general Riemannian manifolds
we improve the earlier results in \cite{BSS} by obtaining quasimode estimates for a larger (and optimal) range of exponents
under the weaker assumption that $V\in L^{n/2}$.
\end{abstract}

\maketitle
\setcounter{secnumdepth}{3}

\newsection{Introduction and main results}


The main purpose of this paper is to extend the uniform Sobolev inequalities on compact Riemannian manifolds $(M,g)$ of \cite{DKS}, \cite{BSSY}, and \cite{ShYa} to include Schr\"odinger operators,
\begin{equation}\label{1.1}
H_V=-\Delta_g+V(x),
\end{equation}
with critically singular potentials $V$, which are always assumed to be real-valued.  
For the most part, we  shall merely assume that
\begin{equation}\label{1.2}V
\in 
L^{n/2}(M).
\end{equation} 


In an earlier work of three of the authors \cite{BSS}, 
in addition to \eqref{1.2}, 
it was assumed that $V\in {\mathcal K}$, the Kato class (see \S~\ref{abstractsec}).
The spaces $L^{n/2}$ and ${\mathcal K}$ have the same
scaling properties, and both obey the scaling law of the Laplacian, which accounts for their criticality.  As was shown in \cite{BSS}, this condition that $V$ be
a Kato potential is necessary to obtain quasimode estimates for $q=\infty$.  On the other hand, for the exponents arising in uniform Sobolev assumptions we merely need to assume \eqref{1.2}.  There is also recent related work of the
second and fourth authors \cite{HuS} and Frank and Sabin ~\cite{FS} involving the Weyl counting problem for Kato potentials.  Using the uniform Sobolev estimates that we shall prove, we shall easily be able to obtain $L^q$ quasimode estimates for the
optimal range of exponents \eqref{1.5'}, and if we assume, in addition to \eqref{1.2}, that $V_-=\max\{0,-V\}$ is in
the Kato space ${\mathcal K}(M)$, we shall also be able to prove quasimode estimates for larger exponents.  In an earlier work,
the stronger assumption that $V\in {\mathcal K}(M)$ was used to obtain results for large exponents.



As we shall show in an appendix, if we assume \eqref{1.2}
then $H_V$ is essentially self-adjoint and bounded from
below with discrete spectrum, $\mathrm{Spec  } \, H_V$.  After adding a constant to $V$, we may without loss of
generality assume, as we shall throughout, that
\begin{equation}\label{1.3}
0\in \mathrm{Spec } \, H_V \, \, \, \text{and } \, \, \, \text{Spec }H_V \subset \R_+=[0,\infty).
\end{equation}



In order to prove these uniform Sobolev estimates we shall use the following generalized second resolvent formula, which holds for all $n\ge 3$ if $V$ satisfies \eqref{1.2},
\begin{multline}\label{1.4}
(-\Delta_g+V-\zeta)^{-1}-(-\Delta_g-\zeta)^{-1} \\= 
-\bigl[|V|^{1/2} \, (-\Delta_g-\overline{\zeta}\, )^{-1}\bigr]^* \circ
\bigl[ V^{1/2}(\, -\Delta_g+V-\zeta)^{-1}\bigr], \quad \mathrm{Im} \, \zeta \ne 0,
\end{multline}
along with quasimode estimates and 
 uniform Sobolev estimates for the 
unperturbed operator $H_0=-\Delta_g$ from \cite{sogge88} \cite{DKS}, \cite{BSSY}
and \cite{ShYa}.  Here $V^{1/2}=(\text{sgn} V)|V|^{1/2}$, and $[\,\,\cdot\,\,]$ denotes the (unique) bounded extension to the whole space. The resolvent formula \eqref{1.4} also holds for a more general class of potentials;
see e.g.,  \cite{kato1966wave} and \cite{kuroda1973scattering} for more details.

We shall also mention that, for $n\ge 5$, we have the following simpler form of the second resolvent formula,
\begin{equation}\label{1.4'}
(-\Delta_g+V-\zeta)^{-1}-(-\Delta_g-\zeta)^{-1} = -(-\Delta_g-\zeta)^{-1}\, V(\, -\Delta_g+V-\zeta)^{-1},
\end{equation}
since, as we shall show in the appendix, for these dimensions, the operator domains of $H_V-\zeta$ and $-\Delta_g-\zeta$ coincide if  $\mathrm{Im} \, \zeta \ne 0$.

The universal uniform Sobolev estimates and quasimode estimates that we can obtain are
the following.

\begin{theorem}\label{unifSob}
Let $n\ge 3$ and suppose that
\begin{equation}\label{1.5}
\min\bigl(q, \, p(q)'\bigr)\ge \tfrac{2(n+1)}{n-1},
\quad \text{and } \, \tfrac1{p(q)}-\tfrac1q=\tfrac2n.
\end{equation}
 Then
if $V\in L^{n/2}(M)$ satisfies \eqref{1.3}, 
and $\delta>0$ are fixed we have the uniform bounds
\begin{equation}\label{1.6}
\|u\|_q\le C_V\bigl\|\bigl(H_V-\zeta\bigr)u\bigr\|_{p(q)}, \quad \text{if } \, 
\, \, \zeta\in \Omega_\delta,
\end{equation}
where
\begin{equation}\label{1.7}
 \Omega_\delta = \{\zeta \in {\mathbb C}: \,
(\Im \zeta)^2 \ge \delta |\Re \zeta| \, \, 
\text{if } \, \, \Re \zeta \ge 1, \, \, \text{and }  \, \,
\mathrm{dist }(\zeta, \R_+)
 \ge \delta
\, \, 
\text{if } \, \, \Re \zeta <1\}.
\end{equation} 
Also, suppose that 
\begin{equation}\label{1.5'}
2<q\leq \tfrac{2n}{n-4}, \,\,\,\text{if}\,\,\, n\ge 5,\,\,\,\text{or}\,\,\,2<q< \infty,\,\,\ \text{if}\,\,\, n=3, 4.
\end{equation}
Then if $u\in\text{Dom}(H_V)$, we have
\begin{equation}\label{qm}
\|u\|_q\lesssim \la^{\sigma(q)-1}\|(H_V-\la^2+i\la) u\|_2,
\, \, \, \text{if } \, \la\ge1,
\end{equation}
if
\begin{equation}\label{1.12}
\sigma(q)=
\begin{cases}n\bigl(\tfrac12-\tfrac1q)-\tfrac12, \quad q\ge \tfrac{2(n+1)}{n-1},
\\ \\
\tfrac{n-1}2(\tfrac12-\tfrac1q), \quad 2 \le q<\tfrac{2(n+1)}{n-1}.
\end{cases}
\end{equation}
\end{theorem}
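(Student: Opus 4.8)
The plan is to reduce both estimates to the already-cited bounds for the unperturbed operator $-\Delta_g$ by iterating the second resolvent identity \eqref{1.4}, i.e. $(H_V-\zeta)^{-1}=(-\Delta_g-\zeta)^{-1}-(-\Delta_g-\zeta)^{-1}V(H_V-\zeta)^{-1}$. The key algebraic fact is that the hypothesis $V\in L^{n/2}(M)$ together with the balance $\tfrac1{p(q)}=\tfrac2n+\tfrac1q$ in \eqref{1.5} makes multiplication by $V$ a bounded map $L^q(M)\to L^{p(q)}(M)$ with norm $\|V\|_{L^{n/2}}$ (Hölder), and a bounded map $L^{2n/(n-2)}(M)\to L^2(M)$ with norm $\|V\|_{L^{n/2}}^{1/2}$ (writing $V=|V|^{1/2}V^{1/2}$); these are exactly the exponents for which the unperturbed uniform Sobolev and quasimode bounds are available. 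Since $C^\infty(M)$ is dense in $L^{n/2}(M)$, for any $\eps>0$ I would decompose $V=V_1+V_2$ with $\|V_1\|_{L^{n/2}(M)}<\eps$ and $V_2\in C^\infty(M)$; the small rough part $V_1$ will be absorbed by a convergent Neumann series and the smooth bounded part $V_2$ handled by a compactness/Fredholm argument.

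\textbf{Proof of \eqref{1.6}.} Fix $q$ as in \eqref{1.5}. First treat $H_{V_1}=-\Delta_g+V_1$ by iterating \eqref{1.4}, which yields the Born series $(H_{V_1}-\zeta)^{-1}=\sum_{k\ge0}(-1)^k\big[(-\Delta_g-\zeta)^{-1}V_1\big]^k(-\Delta_g-\zeta)^{-1}$. By the unperturbed uniform Sobolev estimate, $(-\Delta_g-\zeta)^{-1}$ is bounded $L^{p(q)}\to L^q$ uniformly for $\zeta\in\Omega_\delta$ with a constant $C=C(n,\delta,M)$, and by Hölder $\|V_1 h\|_{L^{p(q)}}\le\|V_1\|_{L^{n/2}}\|h\|_{L^q}$; hence $\|(-\Delta_g-\zeta)^{-1}V_1\|_{L^q\to L^q}\le C\eps<1$ once $\eps$ is chosen small. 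The series therefore converges in $\mathcal L(L^{p(q)}(M),L^q(M))$, uniformly in $\zeta\in\Omega_\delta$ (which in particular forces $\mathrm{Spec}\,H_{V_1}\cap\Omega_\delta=\emptyset$ by the Birman--Schwinger principle), and represents $(H_{V_1}-\zeta)^{-1}$ there with $\|(H_{V_1}-\zeta)^{-1}\|_{L^{p(q)}\to L^q}\le C/(1-C\eps)$. To restore $V_2$, write
\[
(H_V-\zeta)^{-1}=\big(I+(H_{V_1}-\zeta)^{-1}V_2\big)^{-1}(H_{V_1}-\zeta)^{-1}.
\]
Choosing $p\in(p(q),q)$, multiplication by $V_2$ maps $L^q(M)\to L^p(M)$ boundedly, $(H_{V_1}-\zeta)^{-1}$ maps $L^p(M)$ into $W^{2,p}(M)$ (factor it through the unperturbed resolvent), and $W^{2,p}(M)\hookrightarrow L^q(M)$ is compact because $p>p(q)$ puts $q$ strictly below the Sobolev exponent of $W^{2,p}$; hence $(H_{V_1}-\zeta)^{-1}V_2$ is compact on $L^q(M)$. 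By the Fredholm alternative, $I+(H_{V_1}-\zeta)^{-1}V_2$ is invertible on $L^q(M)$ unless it has a nontrivial kernel element $h$, which (since $L^q(M)\subset L^2(M)$ and $V_2 h\in L^2$) would lie in $\Dom(H_V)$ and satisfy $(H_V-\zeta)h=0$; this is excluded because $\zeta\in\Omega_\delta$ is at distance $\ge$ a positive constant from $\R_+\supset\mathrm{Spec}\,H_V$. Thus the inverse exists for each $\zeta\in\Omega_\delta$; it is bounded on the compact set $\Omega_\delta\cap\{|\zeta|\le R\}$ by operator-norm continuity, and on $\Omega_\delta\cap\{|\zeta|>R\}$ one chooses $p$ with $\tfrac1p-\tfrac1q$ slightly below $\tfrac2n$ so that the subcritical resolvent bound for $-\Delta_g$ decays in $|\zeta|$, forcing $\|(H_{V_1}-\zeta)^{-1}V_2\|_{L^q\to L^q}\to0$ and a convergent Neumann series. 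Combining the two factors gives \eqref{1.6} with $C_V$ depending only on $V,\delta,n,(M,g)$.

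\textbf{Proof of \eqref{qm}.} Now fix $q$ as in \eqref{1.5'} and $\zeta=\la^2-i\la$, which lies in $\Omega_\delta$ for $\delta\le1$. Here I would shift the base point: with $V=V_1+V_2$ as above, the smooth Schr\"odinger operator $H_{V_2}=-\Delta_g+V_2$ satisfies the same quasimode bounds as $-\Delta_g$, i.e. $\|(H_{V_2}-\zeta)^{-1}\|_{L^2\to L^q}\lesssim\la^{\sigma(q)-1}$ for every $q$ in \eqref{1.5'} (a smooth potential is a lower-order perturbation that does not affect the parametrix, so the classical arguments of \cite{sogge88}, \cite{HassellTacy} go through). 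Writing
\[
(H_V-\zeta)^{-1}=(H_{V_2}-\zeta)^{-1}\big(I+V_1(H_{V_2}-\zeta)^{-1}\big)^{-1},
\]
it remains to bound the correction on $L^2(M)$: by Hölder and the unperturbed-type quasimode bound at the exponent $\tfrac{2n}{n-2}$ (which lies in \eqref{1.5'}, with $\sigma=\tfrac12$),
\[
\big\|V_1(H_{V_2}-\zeta)^{-1}\big\|_{L^2\to L^2}\le\|V_1\|_{L^{n/2}}\,\big\|(H_{V_2}-\zeta)^{-1}\big\|_{L^2\to L^{2n/(n-2)}}\lesssim\|V_1\|_{L^{n/2}}\,\la^{-1/2}<1
\]
once $\eps$ is small, uniformly in $\la\ge1$. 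Composing with $\|(H_{V_2}-\zeta)^{-1}\|_{L^2\to L^q}\lesssim\la^{\sigma(q)-1}$ yields the bound $\|(H_V-\zeta)^{-1}\|_{L^2\to L^q}\lesssim\la^{\sigma(q)-1}$; since $(H_V-\zeta)u\in L^2(M)$ for $u\in\Dom(H_V)$, this shows $u\in L^q(M)$ and gives \eqref{qm}.

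\textbf{Main obstacle.} The Hölder and Born-series bookkeeping is routine; the real difficulty is uniformity of the auxiliary inverses over the \emph{unbounded} region $\Omega_\delta$ (respectively over all $\la\ge1$). The smooth part $V_2$ cannot be absorbed by a Neumann series near the bottom of the spectrum, since the relevant operator norm need not be $<1$ there, so one must combine the Fredholm alternative and the Birman--Schwinger principle to get invertibility and then control the inverse at infinity using that the unperturbed resolvent bounds with gain strictly less than $\tfrac2n$ decay as $|\zeta|\to\infty$; dovetailing the compact-$\zeta$ and large-$\zeta$ regimes is the technical heart of the matter. A secondary point is verifying that the Born series sums to the operator-theoretic resolvent rather than merely to a bounded operator, which is handled by density of $L^2(M)\cap L^{p(q)}(M)$ together with the essential self-adjointness and spectral facts for $H_V$ proved in the appendix.
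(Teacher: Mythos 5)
Your decomposition is genuinely different from the paper's: you split $V=V_1+V_2$ with $V_1$ small in $L^{n/2}$ and $V_2$ smooth, handling the small piece by a Born series and the smooth piece by a Fredholm alternative, whereas the paper truncates $V=V_{\le N}+V_{>N}$ into a \emph{bounded} part and a small-$L^{n/2}$ part, iterates the factored resolvent identity \eqref{1.4}, and closes a bootstrap that absorbs the bounded part for large $\la$ via the asymptotic condition \eqref{2.5}. However, there are concrete gaps in your argument.

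First, the H\"older step in your proof of \eqref{qm} is wrong: with $V_1\in L^{n/2}$, multiplication by $V_1$ maps $L^{2n/(n-4)}\to L^2$, not $L^{2n/(n-2)}\to L^2$; as written, $V_1(H_{V_2}-\zeta)^{-1}$ maps $L^2\to L^{2n/(n+2)}$, which does \emph{not} embed back into $L^2$ on a compact manifold. If you instead target $L^{2n/(n-4)}$, the relevant $\la$-exponent is $\sigma\!\bigl(\tfrac{2n}{n-4}\bigr)-1=\tfrac12>0$, so there is no smallness uniformly in $\la$, and the Neumann absorption fails. The only way to recover decay here (the exponent $\sigma\!\bigl(\tfrac{2n}{n-2}\bigr)-1=-\tfrac12$) is to factor $V_1=|V_1|^{1/2}V_1^{1/2}$ so that each half lives in $L^n$ and is tested against $L^{2n/(n-2)}$ and its dual; that is precisely the paper's mechanism via \eqref{1.4}, and without it your iteration does not close. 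Second, the unfactored second resolvent formula you iterate (the paper's \eqref{1.4'}) is valid only for $n\ge5$: for $n=3,4$ the operator domains of $H_V$ and $-\Delta_g$ differ for $V\in L^{n/2}$, so $V_1(H_{V_1}-\zeta)^{-1}f$ need not make sense, and the factored form \eqref{1.4} is required; your proof as written does not cover $n=3,4$. Third, you assert without proof that $H_{V_2}$ with $V_2\in C^\infty$ satisfies all the quasimode estimates of $-\Delta_g$ over the full range \eqref{1.5'}; this is plausible but nontrivial (especially for the large exponents, which are not purely parametrix-local) and is not contained in the cited references, whereas the paper never needs such a statement because the bounded piece $V_{\le N}$ is handled by the $L^2$ functional calculus and the gain $\la^{\sigma(q)+\sigma(p')-2}\to0$. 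Finally, as you note, making the Fredholm inverse uniform over the unbounded set $\Omega_\delta$ is delicate; the paper's truncation bypasses it entirely, at the cost of a bootstrap requiring $\la\ge\Lambda$ and a separate spectral-theorem argument for $1\le\la<\Lambda$ (see \eqref{spectral}, \eqref{spectral1}).
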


%

Here, $\Dom(H_V)$  denotes the domain of $H_V$.     Also, $r'$ denotes the conjugate exponent for $r$, i.e.,
the one satisfying $1/r+1/r'=1$.  
Additionally, we are using the notation that $A\lesssim B$ means that $A$ is bounded from above by a constant times $B$.  The implicit constant might depend on the parameters involved, such as $(M,g)$, $q$ and $V$ in
\eqref{qm}.  

 
The range of exponents in \eqref{1.5} for the uniform Sobolev estimates \eqref{1.6} is more restrictive
than the corresponding 
estimates for $\Rn$ in \cite{KRS} since we require certain $L^2\to L^r$ quasimode estimates from
\cite{sogge88} for both $r=q$ and $r=p(q)'$, which are only valid when the first part of
\eqref{1.5} holds.  Succinctly put, our proof of \eqref{1.6} requires that we use the
manifold version of the Stein-Tomas extension theorem \cite{TomasRestriction}, which is only valid when this condition
holds (see \cite{SFIO2} for more details).

  The condition in the uniform Sobolev inequalities for $\Rn$ in \cite{KRS} is that we replace 
\eqref{1.5} with the weaker requirement that
\begin{equation}\label{1.17}\min\bigl(q, \, p(q)'\bigr)> \tfrac{2n}{n-1}
\quad \text{and } \, \, \, \tfrac1{p(q)}-\tfrac1q=\tfrac2n,
\end{equation} 
which was shown to be
be sharp in \cite{KRS}.
The gap condition in \eqref{1.5} and \eqref{1.17} that $ \tfrac1{p(q)}-\tfrac1q=\tfrac2n$, follows from scaling considerations,
while the necessity of the first part of \eqref{1.17}  is related to the fact that the Fourier transform of surface measure on the
sphere in $\Rn$ is not in $L^q(\Rn)$ if $q\le \tfrac{2n}{n-1}$.

Even though the range of exponents for the uniform Sobolev estimates above might be non-optimal, the ones in \eqref{1.5'}
for the quasimode estimates \eqref{qm} are best possible.
For $n\ge4$ this is
 due to a counterexample for the $V\equiv 0$ case in \cite{SoggeTothZelditch}
(see also \cite{SoggeZelditchQMNote}), and for $n=3$ it follows from a counterexample in \cite[\S 1] {BSS} involving
a nontrivial $L^{n/2}$ potential.
It was a bit surprising to us that, even though the range of exponents for the
uniform Sobolev estimates \eqref{1.6} might be a bit restrictive, we can use them along with their proof to obtain
quasimode bounds as in \eqref{qm} for the optimal range of exponents.  

In an earlier work \cite{BSS} bounds of the
form \eqref{qm} were only obtained for the smaller range where $q<\tfrac{2n}{n-3}$.    Moreover, the bounds \eqref{qm} also improve
the earlier ones since
we are only assuming that $V\in L^{n/2}(M)$ and not
that $V$ is a Kato potential, i.e.,
$V\in {\mathcal K}(M)$.  

As we mentioned before,  if in addition to \eqref{1.2}, we also assume that the negative part of $V$ satisfies $V_-\in {\mathcal K}(M)$ then we can
also obtain the (modified) quasimode estimates in \eqref{qm} and the
related spectral projection estimates for larger
exponents.   See the end of \S~\ref{abstractsec}.

We would also like to record that by using the quasimode estimates \eqref{qm} in Theorem~\ref{unifSob} we can obtain, as
a corollary, Sobolev estimates for $H_V$ in higher dimensions which appear to be new since they only involve the assumption $V\in L^{n/2}(M)$
under which favorable heat kernel estimates need not be valid (see Aizenman and Simon~\cite{AZ} and Simon~\cite{SimonSurvey}).

\begin{corr}\label{Sob}
Let $(M,g)$ be a compact Riemannian manifold of dimension $n\ge 5$ and assume that $H_V$ is as above with $V\in L^{n/2}(M)$.
Then 
\begin{equation}\label{sobes}
\|(H_V+1)^{-\alpha/2} f\|_{L^q(M)}\lesssim \| f \|_{L^p(M)},
\end{equation}
provided that 
\begin{equation}\label{condition}
n(1/p-1/q)=\alpha \quad \text{and } \, \, \, \tfrac{2n}{n+4}\le p\le 2\le q\le \tfrac{2n}{n-4}.
\end{equation}
\end{corr}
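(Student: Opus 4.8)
The plan is to derive the whole corollary from a single application of the quasimode estimate \eqref{qm} at frequency $\la=1$, together with duality, composition in the functional calculus of $H_V$, and complex interpolation. The key observation is that, by \eqref{1.3}, the number $1-i$ (which is $\la^2-i\la$ at $\la=1$) lies off $\mathrm{Spec}\,H_V\subset\R_+$, so that $(H_V+1)^{-1}$ is a bounded, self-adjoint operator on $L^2(M)$ with $\|(H_V+1)^{-1}\|_{L^2\to L^2}\le\dist(-1,\R_+)^{-1}=1$ and with range contained in $\Dom(H_V)$. Hence \eqref{qm} may be applied with $\la=1$ to $u=(H_V+1)^{-1}g$, $g\in L^2(M)$; using the identity $(H_V-1+i)(H_V+1)^{-1}=\mathrm{Id}+(i-2)(H_V+1)^{-1}$ this gives
\[
\|(H_V+1)^{-1}g\|_q\;\lesssim\;\bigl\|\bigl(\mathrm{Id}+(i-2)(H_V+1)^{-1}\bigr)g\bigr\|_2\;\le\;\bigl(1+\sqrt5\bigr)\|g\|_2
\]
for every $q$ in the range \eqref{1.5'}; since $n\ge5$ this includes the endpoint $q=\tfrac{2n}{n-4}$, i.e.\ $(H_V+1)^{-1}\colon L^2(M)\to L^{2n/(n-4)}(M)$, which is the case $\alpha=2$, $p=2$ of \eqref{sobes}.

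Next I would pass to fractional powers by Stein interpolation of the analytic family $z\mapsto (H_V+1)^{-z}$ on the strip $0\le\Re z\le 1$. On the line $\Re z=0$ the multiplier $(\mu+1)^{-iy}$ is unimodular, so $\|(H_V+1)^{-iy}\|_{L^2\to L^2}=1$; on $\Re z=1$ we have $\|(H_V+1)^{-1-iy}\|_{L^2\to L^{2n/(n-4)}}\lesssim 1$ by the previous paragraph, both uniformly in $y\in\R$. Interpolation then yields, for $0\le\theta\le 1$,
\[
\|(H_V+1)^{-\theta}f\|_{L^{q_\theta}(M)}\;\lesssim\;\|f\|_{L^2(M)},\qquad \tfrac1{q_\theta}=\tfrac12-\tfrac{2\theta}{n},
\]
which is precisely the $p=2$ case of \eqref{sobes}, namely $(H_V+1)^{-\alpha/2}\colon L^2\to L^q$ for $2\le q\le\tfrac{2n}{n-4}$ and $\alpha=n(\tfrac12-\tfrac1q)\in[0,2]$. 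Since $(H_V+1)^{-\alpha/2}$ is self-adjoint on $L^2$, dualizing gives the companion estimate $(H_V+1)^{-\alpha/2}\colon L^p\to L^2$ for $\tfrac{2n}{n+4}\le p\le 2$ and $\alpha=n(\tfrac1p-\tfrac12)\in[0,2]$.

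Finally, for a general admissible pair $(p,q)$ I would split $\alpha=n(\tfrac1p-\tfrac1q)=\alpha_1+\alpha_2$ with $\alpha_1=n(\tfrac1p-\tfrac12)$ and $\alpha_2=n(\tfrac12-\tfrac1q)$; the constraints $\tfrac{2n}{n+4}\le p\le 2\le q\le\tfrac{2n}{n-4}$ force $\alpha_1,\alpha_2\in[0,2]$, and by the functional calculus $(H_V+1)^{-\alpha/2}=(H_V+1)^{-\alpha_2/2}\circ(H_V+1)^{-\alpha_1/2}$. Composing the two estimates from the previous step, $L^p\to L^2\to L^q$, yields \eqref{sobes}.

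The argument is short, and the only real insight is to use \eqref{qm} at a single fixed frequency $\la=1$: an attempt to reach the endpoint $q=\tfrac{2n}{n-4}$ instead by decomposing $(H_V+1)^{-\alpha/2}$ into dyadic spectral pieces and summing the quasimode estimates band by band would lose a logarithm precisely there, since $\sigma(q)-\alpha=-\tfrac12$ for $q\ge\tfrac{2(n+1)}{n-1}$. The remaining steps are routine: one checks that $u=(H_V+1)^{-1}g$ lies in $\Dom(H_V)$ so that \eqref{qm} applies at $\la=1$, and that the admissibility hypotheses of Stein interpolation hold for $z\mapsto(H_V+1)^{-z}$ on $0\le\Re z\le1$ — both are immediate, the latter because the boundary bounds are uniform in the imaginary direction. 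The hypothesis $n\ge5$ is exactly what makes $\tfrac{2n}{n-4}$ a finite exponent exceeding $2$ and is the range in which \eqref{1.5'} permits $q$ up to $\tfrac{2n}{n-4}$.
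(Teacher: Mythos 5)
Your proof is correct and follows essentially the same route as the paper: apply \eqref{qm} at $\la=1$ to get $(H_V+1)^{-1}\colon L^2\to L^{2n/(n-4)}$, Stein-interpolate the analytic family $(H_V+1)^{-z}$ against the trivial $L^2$ bound, dualize, and compose to pass through $L^2$. The only cosmetic difference is that you carry out the resolvent identity and the interpolation family explicitly, where the paper compresses these into "the spectral theorem and Stein's interpolation theorem."
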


The proof is simple.  
Since we are assuming \eqref{1.3}, we obtain
from the spectral theorem and the special case of \eqref{qm} with $\la=1$ that $(H_V+1)^{-1}: \, L^2(M)\to L^{\frac{2n}{n-4}}(M)$, and
by duality it also maps $L^{\frac{2n}{n+4}}(M)\to L^2(M)$.  By applying Stein's interpolation theorem, the spectral theorem and the trivial
$L^2$ bounds, we deduce that $(H_V+1)^{-\alpha/2}: \, L^2(M)\to L^q(M)$ for $2\le q \le \tfrac{2n}{n-4}$ with
$\alpha =n(1/2-1/q)$, and also $(H_V+1)^{-\alpha/2}: \, L^p(M)\to L^2(M)$ for $\tfrac{2n}{n+4}\le p\le 2$ with
$\alpha =n(1/p-1/2)$.  Since these two facts yield the desired $L^p(M)\to L^q(M)$ bounds for 
$(H_V+1)^{-\alpha/2}$ the proof is complete.



As in \cite{BSSY}, in certain geometries we can obtain
improved uniform Sobolev estimates and quasimode estimates using improved
bounds for the unperturbed operator $H_0$.  

First, if we use the improved spectral projection estimates of Hassell and Tacy~\cite{HassellTacy}
and two of us \cite{SBLog}, we can obtain the following.

\begin{theorem}\label{nonpossob}
Let $n\ge 3$ and suppose that
\begin{equation}\label{1.8}
\min\bigl(q, \, p(q)'\bigr)> \tfrac{2(n+1)}{n-1},
\quad \text{and } \, \tfrac1{p(q)}-\tfrac1q=\tfrac2n.
\end{equation}
Assume also that $(M,g)$ has nonpositive sectional curvatures,
$V\in L^{n/2}(M)$ satisfies \eqref{1.3}, and that $\delta>0$ are fixed.  Then we have
\begin{equation}\label{1.9}
\|u\|_q\le C\bigl\|\bigl(H_V-\zeta\bigr)u\bigr\|_{p(q)}, \quad \text{if } \, 
\, \zeta \in \Omega_{\e,\delta},
\end{equation}
where
\begin{multline}\label{1.10}
 \Omega_{\e,\delta}=
\{\zeta: \,
(\Im \zeta)^2 \ge \delta \, 
\bigl(\e(\la)\bigr)^2 \, 
|\Re \zeta| \, \, \, 
\text{if } \,  \Re \zeta \ge 1, 
\\
 \text{and }  \, \,
\mathrm{dist }(\zeta, \R_+) \ge \delta
\, \, \text{if } \,  \Re \zeta <1
\},
\end{multline} 
with
\begin{equation}\label{1.11}
\e(\la)=\bigl(\log(2+\la)\bigr)^{-1}.
\end{equation}
Also, suppose that 
\begin{equation}\label{1.5''}
\tfrac{2(n+1)}{n-1}<q\leq \tfrac{2n}{n-4}, \,\,\,\text{if}\,\,\, n\ge 5,\,\,\,\text{or}\,\,\,\tfrac{2(n+1)}{n-1}<q< \infty,\,\,\ \text{if}\,\,\, n=3, 4.
\end{equation}
Then if $\sigma(q)$ is as in \eqref{1.12} and
$u\in \Dom(H_V)$,
\begin{equation} \label{1.13}
\|u\|_q\lesssim \bigl(\sqrt{\e(\la)}\bigr)^{-1} \, \la^{\sigma(q)-1}\, \| (H_V-\la^2+i\e(\la)\la)u\|_{2},
\,\,\, \text{if } \, \la \ge 1.
\end{equation}
Finally, if $q=q_c=\tfrac{2(n+1)}{n-1}$ we have for some $\delta_n>0$ depending on the dimension
\begin{equation}\label{3.20}
\|u\|_{q_c}\lesssim \la^{\sigma(q_c)-1}\, \bigl(\e(\la)\bigr)^{-1+\delta_n} \, 
\bigl\| (H_V-(\la+i\e(\la))^2)u\bigr\|_2
\end{equation}
\end{theorem}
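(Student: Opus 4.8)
The plan is to follow the proof of Theorem~\ref{unifSob}, substituting the sharper estimates for $H_0=-\Delta_g$ available on manifolds of nonpositive curvature. The common engine is the second resolvent formula \eqref{1.4}, recast in Birman--Schwinger form: for $\zeta$ in the resolvent sets of $H_V$ and of $-\Delta_g$,
$$(H_V-\zeta)^{-1}=(-\Delta_g-\zeta)^{-1}-(-\Delta_g-\zeta)^{-1}|V|^{1/2}\bigl(I+B_\zeta\bigr)^{-1}V^{1/2}(-\Delta_g-\zeta)^{-1},$$
where $B_\zeta=V^{1/2}(-\Delta_g-\zeta)^{-1}|V|^{1/2}$, valid once $B_\zeta$ is bounded and $I+B_\zeta$ is invertible on $L^2(M)$. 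Since $V\in L^{n/2}(M)$, the multiplications $V^{1/2}$ and $|V|^{1/2}$ are bounded $L^{2n/(n-2)}\to L^2$ and $L^2\to L^{2n/(n+2)}$ with norm $\|V\|_{n/2}^{1/2}$, so by H\"older — and, where the exponent exceeds $\tfrac{2n}{n-2}$, by the duality $\|(H_V-\zeta)^{-1}\|_{L^{p(q)}\to L^q}=\|(H_V-\overline\zeta)^{-1}\|_{L^{q'}\to L^{p(q)'}}$ with $p(p(q)')=q'$ — everything reduces to resolvent bounds for $-\Delta_g$ at or below the endpoint pair $(\tfrac{2n}{n+2},\tfrac{2n}{n-2})$, which is always admissible because $\tfrac{2n}{n-2}>\tfrac{2(n+1)}{n-1}$.

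For \eqref{1.9} we feed in the improved uniform Sobolev estimates of \cite{BSSY} on nonpositively curved $M$, namely $\|(-\Delta_g-\zeta)^{-1}\|_{L^{p(q)}\to L^q}\lesssim1$ for all $\zeta\in\Omega_{\e,\delta}$ when $\min(q,p(q)')>\tfrac{2(n+1)}{n-1}$, together with the endpoint bound. By the reductions above these yield $\|(-\Delta_g-\zeta)^{-1}|V|^{1/2}\|_{L^2\to L^q}\lesssim_V1$, $\|V^{1/2}(-\Delta_g-\zeta)^{-1}\|_{L^{p(q)}\to L^2}\lesssim_V1$ and $\|B_\zeta\|_{L^2\to L^2}\lesssim_V1$, uniformly on $\Omega_{\e,\delta}$. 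For the uniform invertibility of $I+B_\zeta$ we argue as in Theorem~\ref{unifSob}: write $V=V_1+V_2$ with $V_1\in L^\infty$ and $\|V_2\|_{n/2}$ so small that the blocks of $B_\zeta$ involving $V_2$ have norm $<\tfrac14$, while the $V_1$-block has norm $\lesssim\|V_1\|_\infty/\dist(\zeta,\mathrm{Spec}\,H_V)$, which tends to $0$ as $\zeta\to\infty$ in $\Omega_{\e,\delta}$ (there $\dist(\zeta,\R_+)\to\infty$, being $\gtrsim\e(\la)\la$ when $\Re\zeta\ge1$); hence $\|B_\zeta\|_{L^2\to L^2}<\tfrac12$ off a compact subset of $\Omega_{\e,\delta}$, and on that compact subset $(I+B_\zeta)^{-1}$ is continuous by the analytic Fredholm theorem, since $\zeta$ stays in the resolvent set of $H_V$. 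Substituting into the Birman--Schwinger expansion gives \eqref{1.9}.

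For the quasimode bounds \eqref{1.13} and \eqref{3.20} the new ingredient is the improved spectral cluster estimates of Hassell--Tacy~\cite{HassellTacy} and \cite{SBLog} on nonpositively curved $M$: $\|\1_{[\la,\la+\e(\la)]}(\sqrt{-\Delta_g})\|_{L^2\to L^q}\lesssim\e(\la)^{1/2}\la^{\sigma(q)}$ for $q>\tfrac{2(n+1)}{n-1}$, and at the critical exponent the weaker gain $\|\1_{[\la,\la+\e(\la)]}(\sqrt{-\Delta_g})\|_{L^2\to L^{q_c}}\lesssim\e(\la)^{\delta_n}\la^{\sigma(q_c)}$. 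One first obtains the unperturbed versions by writing $(-\Delta_g-\zeta)^{-1}=\sum_k(\tau_k-\zeta)^{-1}\1_{I_k}(\sqrt{-\Delta_g})$ over a partition of $[0,\infty)$ into intervals $I_k$ of length $\e(\la)$ about $\la$ (with $\tau_k$ a point of $I_k^2$): for $\zeta=\la^2-i\e(\la)\la$ (resp.\ $(\la+i\e(\la))^2$) one has $|\tau_k-\zeta|\gtrsim\la\,\e(\la)\max(|k|,1)$, the pieces $\1_{I_k}(\sqrt{-\Delta_g})f$ are $L^2$-orthogonal, and Cauchy--Schwarz in $k$ together with $\|\1_{I_k}(\sqrt{-\Delta_g})g\|_q\le\|\1_{I_k}(\sqrt{-\Delta_g})\|_{L^2\to L^q}\|\1_{I_k}(\sqrt{-\Delta_g})g\|_2$ produces the stated $\e(\la)$-power; the high-frequency tail is absorbed by elliptic regularity, since on $\{-\Delta_g\ge4\la^2\}$ the resolvent maps $L^2\to L^q$ with norm $\lesssim\la^{\sigma(q)-3/2}\ll\e(\la)^{-1/2}\la^{\sigma(q)-1}$. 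It then remains to pass to $H_V$ via the second resolvent formula — most cleanly by first transferring the improved cluster estimates to $H_V$, i.e.\ $\|\1_{[\la,\la+\e(\la)]}(\sqrt{H_V})\|_{L^2\to L^q}\lesssim\e(\la)^{1/2}\la^{\sigma(q)}$ (resp.\ the $\delta_n$-gain at $q_c$), by running \eqref{1.4} on an $H_V$-cluster against the corresponding $-\Delta_g$-cluster, and then repeating the resolvent sum for $(H_V-\zeta)^{-1}$.

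I expect the transfer step to be the main obstacle. The spectral parameters in \eqref{1.13} and \eqref{3.20} — of the form $\la^2-i\e(\la)\la$ and $(\la+i\e(\la))^2$ — have imaginary part only $\e(\la)\la$, far smaller than $\sqrt{\Re\zeta}\sim\la$, so they lie outside $\Omega_\delta$: neither the uniform resolvent estimates for $H_V$ nor even the naive bound on $B_\zeta$ are available there, since estimating $B_\zeta$ by dualizing through $L^{2n/(n-2)}$ and summing the spectral decomposition of $(-\Delta_g-\zeta)^{-1}$ term by term costs a factor $\sum_k|\tau_k-\zeta|^{-1}\cdot\la\,\e(\la)\sim\e(\la)^{-1}$, which would swamp the $\e(\la)^{-1/2}$ (resp.\ $\e(\la)^{-1+\delta_n}$) loss being claimed. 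The resolution must keep the spectral decomposition intact and exploit the $L^2$-orthogonality of its pieces even after conjugation by $|V|^{1/2}$ and $V^{1/2}$, rather than treating those factors by brute-force H\"older and duality; this, together with the borderline case $q=\tfrac{2n}{n-4}$ — where $p(q)=2$ and no slack is left for an absorption argument — is where the real work lies.
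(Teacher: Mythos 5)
Your Birman--Schwinger approach for the uniform Sobolev bound \eqref{1.9} is a legitimate alternative to the paper's route (the paper iterates the second resolvent formula \eqref{1.4} directly and closes by a bootstrap/absorption rather than inverting $I+B_\zeta$), and the splitting $V = V_{\le N} + V_{>N}$ plays the same role in both; one small slip: the $V_1$-block of $B_\zeta$ is controlled by $\|V_1\|_\infty/\dist(\zeta,\mathrm{Spec}(-\Delta_g))$, not by $\dist(\zeta,\mathrm{Spec}\,H_V)$. That part of the plan is essentially sound.

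The real gap is in the treatment of the quasimode estimates \eqref{1.13} and \eqref{3.20}, where you flag a ``transfer step'' as the main obstacle and leave it unresolved. You are worried that $\zeta = \la^2 \pm i\e(\la)\la$ lies outside $\Omega_\delta$ and hence outside the reach of the $H_V$ resolvent bounds, but this is a misreading: because $(\Im\zeta)^2 = (\e(\la))^2\la^2 = (\e(\la))^2|\Re\zeta|$, this $\zeta$ does lie in $\Omega_{\e,\delta}$, so \eqref{1.9} is exactly available. Once \eqref{1.9} is in hand, the paper's Corollary~\ref{corr2.2} obtains \eqref{1.13} with one more pass through the second resolvent formula \eqref{2.20}: the $A$-term is controlled by the unperturbed quasimode estimate (dual form), and the $B$-term is controlled by the estimate corresponding to \eqref{l.1'} followed by the $L^{q'}\to L^{p'}$ bound that \eqref{1.9} (via duality) provides for $H_V$. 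There is no need to transfer the cluster estimates to $\sqrt{H_V}$ first, and no loss of a full $\e(\la)^{-1}$ as you feared: $\|B\|_2 \lesssim \sqrt{\e(\la)}\la^{\sigma(q)-1}(\e(\la))^{-1}\|V\|_{n/2}\|(H_V-\zeta)^{-1}f\|_{p'}\lesssim \sqrt{\e(\la)}\la^{\sigma(q)-1}(\e(\la))^{-1}\|f\|_{q'}$, which is the desired bound. The cluster-transfer/$L^2$-orthogonality-after-conjugation mechanism you describe as necessary is not what the paper does, and the worry that drives it (that $\zeta\notin\Omega_\delta$) does not apply because the theorem's hypothesis is membership in the larger set $\Omega_{\e,\delta}$.

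There are two further pieces you do not supply. First, \eqref{1.13} asserts quasimode bounds for all $q$ up to $2n/(n-4)$, which exceeds the upper bound $q < 2n/(n-3)$ imposed by the gap condition in \eqref{1.8}; so for $q\in(2n/(n-3), 2n/(n-4)]$ (including the borderline $q = 2n/(n-4)$, where $p(q)=2$) the uniform Sobolev input is unavailable and a separate mechanism is needed. The paper supplies this in Theorem~\ref{thm2.3} via a Littlewood--Paley split $\beta(P/\la)$ and Sobolev embedding, bootstrapping from the quasimode estimate at a smaller exponent $r$; you do not discuss it. Second, for the critical bound \eqref{3.20} at $q_c=2(n+1)/(n-1)$, \eqref{1.9} is not available either (the range in \eqref{1.8} is open), and the paper uses a direct decomposition $(-\Delta_g-(\la+i\e(\la))^2)^{-1} = T_\la + R_\la$ with $T_\la = T^0_\la + T^1_\la$, estimating $R_\la$ via the $\delta_n$-gain cluster bounds of \cite{SBLog}, $T^0_\la$ locally, and $T^1_\la$ by interpolation with small $\delta_0$, before running an absorption argument like \eqref{3.28}--\eqref{3.32}; your sketch does not contain a functioning substitute for this step.
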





The quasimode estimates \eqref{1.13} improve those
in \cite{BSS} in several ways.  First, as noted before, we
are not assuming that $V$ is a Kato potential, only 
\eqref{1.2}.  Moreover, unlike \cite{BSS} we also do not
have to assume that $V$ has small $L^{n/2}$-norm.  
We also obtain the bounds in \eqref{1.13} for the optimal range of exponents given by \eqref{1.5'}, and the bounds \eqref{3.20} for the critical
exponent $q=q_c$ are new.  We have only stated the bounds of the form \eqref{3.20} for $q=q_c$; however, if one interpolates
with the trivial $L^2$ estimate one sees that bounds of the form \eqref{3.20} also hold for all $q\in (2,q_c)$ if one replaces
$\delta_n$ with the appropriate $\delta_{n,q}>0$.

As
we noted after Theorem~\ref{unifSob} we also can obtain
quasimode bounds for exponents larger than 
the ones in \eqref{1.13}
if we assume that $V_-\in {\mathcal K}(M)$, and here too, in this case,  we can drop the smallness assumption that was
used in \cite{BSS}.

By results in \cite{SoggeZelditchQMNote} the bounds in 
\eqref{1.13} are equivalent to the following spectral projection bounds
\begin{equation}\label{1.14}
\|\chi^V_{[\la, \la+(\log \la)^{-1}]}\|_{L^2(M)\to L^q(M)}
\lesssim (\log \la)^{-1/2} \la^{\sigma(q)}, \quad
\la \ge 1,
\end{equation}
for $q$ as in \eqref{1.5''}, if 
$\chi^V_{[\la, \la+(\log 2+\la)^{-1}]}$ denotes the spectral projection operator which projects onto the part of the spectrum of $\sqrt{H_V}$
 in the corresponding shrinking intervals 
$[\la, \la+(2+\log \la)^{-1}]$.
If in addition to \eqref{1.2} we also assume that $V$
is in the Kato class then we 
also have \eqref{1.14}, as in the
$V\equiv 0$ case in Hassell and Tacy~\cite{HassellTacy}
for all $p>\tfrac{2(n+1)}{n-1}$.  The bounds in \eqref{3.20} extend the log-improvements of two of
us \cite{SBLog} to include singular potentials as above.  Just as was the case for \eqref{1.14}, the quasimode
estimates in \eqref{3.20} yield the equivalent log-improved spectral projection estimates
\begin{equation}\label{3.21}
\bigl\|\chi^V_{ [\la, \la+(\log(2+\la))^{-1}] } f \bigr\|_{q_c} \lesssim \la^{\sigma(q_c)} \, (\log(2+\la))^{-\delta_n} \, \|f\|_2.
\end{equation}

Additionally, in \S~\ref{2d}, we shall obtain quasimode estimates of the form
\eqref{1.13} and \eqref{3.20} when $n=2$; however, as in \cite{BSS} (which handled small potentials), in this case we shall have to assume that
$V \in L^1(M)\cap {\mathcal K}(M)$.  We improve the corresponding results in \cite{BSS}, though, by dropping the smallness assumption
on $V$.

As was shown in Hickman~\cite{Hickman} in higher dimensions and Bourgain, Shao, Yao and one of
us~\cite{BSSY} for $n=3$,  one can use 
the decoupling theorem of Bourgain and Demeter~\cite{BourgainDemeterDecouple} to obtain
substantial improvements of \eqref{1.14}
when $M={\mathbb T}^n$ is the torus, which correspond to taking $\e(\la)=\la^{-1/3+c}$ for
all $c>0$.  Using these improved quasimode
estimates we can prove the corresponding 
stronger version of Theorem~\ref{nonpossob} for
tori.

\begin{theorem}\label{torussob}
Let $n\ge3$ and assume that $p(q)$ and $q$ are as in 
\eqref{1.17}.  Then for $V\in L^{n/2}({\mathbb T}^n)$ satisfying \eqref{1.3},
$\delta>0$ and $c_0>0$ fixed, 
we have
\begin{equation}\label{1.15}
\|u\|_{L^q(\Tn)}\le C\bigl\|\bigl(H_V-\zeta\bigr)u\bigr\|_{L^{p(q)}(\Tn)}, \quad \text{if }
\, \, \zeta\in \Omega_{\e,\delta},
\end{equation}
where $\Omega_{\e,\delta}$ is as in \eqref{1.10} with
\begin{equation}\label{1.16}
\e(\la)=\begin{cases}\la^{-\beta_1(n, p(q)^\prime)+c_0}\,\,\,\text{if}\,\,\, \frac{2n}{n-1}< q<\frac{2n}{n-2} \\
\la^{-\beta_1(n,q)+c_0}\,\,\,\text{if}\,\,\, \frac{2n}{n-2}\le q<\frac{2n}{n-3},
\end{cases}
\end{equation}
for certain $\beta_1(n,r)>0$ and $p(q)^\prime$ such that $\frac{1}{p(q)^\prime}+\frac{1}{p(q)}=1$.
Also, suppose that 
\begin{equation}
\nonumber
\e(\la)=\begin{cases}\la^{-\beta(n, q)+c_0}, \,\,\,\text{if}\,\,\, \frac{2(n+1)}{n-1}< q<\frac{2n}{n-2},\\
\la^{-\frac13+c_0}, \,\,\,\frac{2n}{n-2}\le q\leq \tfrac{2n}{n-4}, \,\,\,\text{if}\,\,\, n\ge 5,\,\,\,\text{or}\,\,\,\frac{2n}{n-2}\le q< \infty,\,\,\ \text{if}\,\,\, n=3, 4,
\end{cases}
\end{equation}
where 
$$\beta(n, q)=\min\{ \beta_1(n, p(q)^\prime),  \tfrac{(n-1)^2q-2(n-1)(n+1)}{(n+1)(n-1)q-2(n+1)^2+8}\}.
$$
Then we have the analog of \eqref{1.13} on ${\mathbb T}^n$
for $q$ satisfying \eqref{1.5''}.
\begin{equation} \label{1.15'}
\|u\|_{L^q(\Tn)}\lesssim \bigl(\sqrt{\e(\la)}\bigr)^{-1} \, \la^{\sigma(q)-1}\, \| (H_V-\la^2+i\e(\la)\la)u\|_{L^2(\Tn)},
\,\,\, \text{if } \, \la \ge 1.
\end{equation}
Additionally, for the critical point $q_c=\frac{2(n+1)}{n-1}$, suppose that $\e(\la)=\la^{-\beta_1(n, p(q_c)^\prime)+c_0}$ which satisfies \eqref{1.16}, or more explicitly 
\begin{equation}\label{1.18}
\,\,\,\e(\la)=\la^{-\frac15+c_0},
\,\,\text{if}\,\,n\ge 4,\,\,\text{and}\,\,\,  \e(\la)=\la^{-\frac{3}{16}+c_0},\,\,\text{if}\,\,n=3,
\end{equation}
we have for $u\in\text{Dom}(H_V)$
\begin{equation}\label{1.19}
\|u\|_{L^{q_c}(\Tn)}
\lesssim \la^{\e_0}
(\e(\la))^{-\frac{n+3}{2(n+1)}}
\la^{-\frac{n+3}{2(n+1)}} 
\|(H_V-(\la + i\e(\la))^2)u\|_{L^2(\Tn)},
\, \, \la \ge 1,
\end{equation}
\end{theorem}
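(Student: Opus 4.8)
\emph{Proof strategy.} The plan is to follow the proof of Theorem~\ref{nonpossob} (which in turn runs the scheme of Theorem~\ref{unifSob}), simply replacing the curvature-improved bounds for $-\Delta_g$ of Hassell--Tacy~\cite{HassellTacy} and \cite{SBLog} by the decoupling-improved bounds for $-\Delta$ on $\Tn$ of Hickman~\cite{Hickman} ($n\ge4$) and Bourgain--Shao--Yao--Sogge~\cite{BSSY} ($n=3$). Thus the only genuinely new input is to record the unperturbed estimates on $\Tn$ that persist on the pinched region $\Omega_{\e,\delta}$: (i) the improved uniform resolvent bound $\|(-\Delta-\zeta)^{-1}\|_{L^{p(q)}(\Tn)\to L^q(\Tn)}\lesssim1$ for $q$ as in \eqref{1.17}, $\zeta\in\Omega_{\e,\delta}$, $\e(\la)$ as in \eqref{1.16}; (ii) the self-dual endpoint bound $\|(-\Delta-\zeta)^{-1}\|_{L^{2n/(n+2)}(\Tn)\to L^{2n/(n-2)}(\Tn)}\lesssim1$ on the same region; (iii) the improved $L^2(\Tn)\to L^q(\Tn)$ quasimode bounds $\|(-\Delta-\la^2+i\e(\la)\la)^{-1}\|_{L^2\to L^q}\lesssim(\sqrt{\e(\la)})^{-1}\la^{\sigma(q)-1}$ for $q$ as in \eqref{1.5''}, and the decoupling-sharp endpoint quasimode estimate at $q_c$ with $\e(\la)$ as in \eqref{1.18}. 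Each follows from the Bourgain--Demeter decoupling theorem together with the standard reduction of $L^p\to L^q$ and $L^2\to L^q$ resolvent bounds to spectral cluster estimates on $O(\e(\la))$-length intervals; for $n=3$ these resolvent forms are essentially contained in \cite{BSSY}.

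\emph{The Birman--Schwinger reduction.} Granting (i)--(iii), the rest is the argument of Theorems~\ref{unifSob}--\ref{nonpossob}. Writing $f=(H_V-\zeta)u$, $A=(-\Delta-\zeta)^{-1}$, $W=|V|^{1/2}$, $V^{1/2}=(\sgn V)W$, the second resolvent formula \eqref{1.4} gives, after multiplying by $V^{1/2}$ and solving,
\[
u=Af-\bigl[W(-\Delta-\overline{\zeta})^{-1}\bigr]^*(I+B(\zeta))^{-1}V^{1/2}Af,\qquad B(\zeta):=V^{1/2}AW .
\]
By H\"older and (ii), $B(\zeta)$ is bounded on $L^2(\Tn)$ with $\|B(\zeta)\|_{2\to2}\lesssim\|V\|_{n/2}$, and it is compact: approximating $V$ in $L^{n/2}$ by bounded potentials approximates $B(\zeta)$ in operator norm by operators $M_1\circ A\circ M_2$ with $M_1,M_2$ bounded multipliers, which are compact since $A\colon L^2\to H^2\hookrightarrow\hookrightarrow L^2$. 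Moreover $I+B(\zeta)$ is injective on $L^2$ for $\zeta\notin\mathrm{Spec}\,H_V$, since a null vector $\phi$ yields a nonzero $\psi=A(|V|^{1/2}\phi)\in L^2$ with $(H_V-\zeta)\psi=0$, impossible when $\Im\zeta\ne0$. Hence $I+B(\zeta)$ is invertible on $L^2$ throughout $\Omega_{\e,\delta}$, and for the uniform bound I would split $\Omega_{\e,\delta}=\{|\zeta|\le R\}\cup\{|\zeta|>R\}$: on the compact piece $\zeta\mapsto(I+B(\zeta))^{-1}$ is norm-continuous hence bounded, while on $\{|\zeta|>R\}$, splitting $V=V_1+V_2$ with $\|V_1\|_{n/2}$ as small as desired and $V_2\in L^\infty$, one gets $\|V_1^{1/2}AW_1\|_{2\to2}\lesssim\|V_1\|_{n/2}$ by (ii) and $\|V_2^{1/2}AW_2\|_{2\to2}\le\|V_2\|_\infty\|A\|_{2\to2}\le\|V_2\|_\infty/\dist(\zeta,\R_+)$, the latter tending to $0$ as $|\zeta|\to\infty$ in $\Omega_{\e,\delta}$ because $\dist(\zeta,\R_+)\ge\sqrt{\delta}\,\e(\la)\la\to\infty$ there for $\Re\zeta\ge1$ (using $\e(\la)\la\to\infty$, as the relevant $\beta_1(n,\cdot),\beta(n,\cdot)$ are $<1$), and $\dist(\zeta,\R_+)\to\infty$ for $\Re\zeta<1$; so $\|B(\zeta)\|_{2\to2}<1$ for $|\zeta|$ large.

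\emph{Conclusion of the proof.} With $(I+B(\zeta))^{-1}$ uniformly bounded on $L^2$, one reads off \eqref{1.15} from the displayed identity at the self-dual exponent $q=\tfrac{2n}{n-2}$ (estimating the three factors by (i) for $\tfrac{2n}{n+2}\to\tfrac{2n}{n-2}$, by (ii), and by H\"older), and then passes to the full range \eqref{1.17} by the same duality- and embedding-based argument as in the proof of Theorem~\ref{unifSob} (using that $\min(q,p(q)')>\tfrac{2n}{n-1}$ makes (i) available at both $q$ and $p(q)'$, and that $\beta_1(n,q)=\beta_1(n,p(q)')$ by duality, so the region does not change). For \eqref{1.15'} and \eqref{1.19} I would run the identical scheme with $\zeta=\la^2-i\e(\la)\la$, resp.\ $\zeta=(\la+i\e(\la))^2$, replacing (i) by (iii): one bounds $\|Af\|_q\lesssim(\sqrt{\e(\la)})^{-1}\la^{\sigma(q)-1}\|f\|_2$ and the correction term as before (invoking (ii) at the self-dual exponent $\tfrac{2n}{n-2}$, which lies in \eqref{1.5''}), obtaining \eqref{1.15'} first on the uniform-Sobolev range and then on all of \eqref{1.5''} by interpolating against $\|A\|_{2\to2}\lesssim(\e(\la)\la)^{-1}$ and the Sobolev embedding for $\Dom(H_V)$, exactly as in the proof of Theorem~\ref{unifSob}; \eqref{1.19} is then the case $q=q_c$ with $\e(\la)$ as in \eqref{1.18}.

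\emph{Main obstacle.} The crux is (a) the uniform invertibility of $I+B(\zeta)$ on the whole of $\Omega_{\e,\delta}$, in particular on the part of $\{\Re\zeta\ge1\}$ pinching toward $\R_+$ as $\la\to\infty$, where one must check both that $\dist(\zeta,\R_+)$ still tends to $\infty$ and that the torus inputs (ii)/(iii) really do hold on this region --- i.e.\ that $\Omega_{\e,\delta}$ for the $\e$ chosen in \eqref{1.16} sits inside the region on which the self-dual endpoint estimate is proved, which reduces to the fact that $\beta_1(n,\cdot)$ over the range \eqref{1.17} is maximized at the self-dual exponent $\tfrac{2n}{n-2}$; and (b) performing the analogous compatibility bookkeeping for the $\la$-dependent quasimode region of \eqref{1.15'}--\eqref{1.19}, and verifying that the extension of \eqref{1.15'} up to the endpoint of \eqref{1.5''} on $\Tn$ goes exactly as on a general compact manifold. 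Everything else follows the templates of Theorems~\ref{unifSob} and~\ref{nonpossob}.
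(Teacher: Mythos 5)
Your strategy of reducing to unperturbed estimates on $\Tn$ and then running an abstract perturbation scheme is the right framework, and the Birman--Schwinger--style inversion of $I+B(\zeta)$ on $L^2$ is a genuine alternative to the paper's route. The paper does not invert $I+B(\zeta)$; instead it applies the second resolvent formula once, splits $V=V_{\le N}+V_{>N}$, runs a bootstrap on $\|(H_V-\zeta)^{-1}f\|_{L^q}$ directly using Lemma~\ref{simple} at the level of $L^p\to L^q$ (with both exponents varying), and closes the argument by choosing $N_1,N_2$ and then $\Lambda$ large (see \eqref{2.16}--\eqref{2.23}). Your route would give a clean on/off statement about invertibility but relies on precisely the same mapping properties ($V^{1/2}A:L^p\to L^2$ and $[W(-\Delta-\bar\zeta)^{-1}]^*:L^2\to L^q$), which for exponent pairs away from the self-dual one are not free: they are exactly the content of Lemma~\ref{simple}.

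There are, however, two genuine gaps.

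First, you assert that the uniform Sobolev estimate \eqref{1.15} at all pairs $(p(q),q)$ satisfying \eqref{1.17} follows from the self-dual case $q=\tfrac{2n}{n-2}$ ``by duality and embedding.'' This does not work: the self-dual endpoint does not control the other pairs, and the reference to ``the argument of Theorem~\ref{unifSob}'' is misplaced, since that proof runs exponent-by-exponent via the bootstrap in Theorem~\ref{thm2.1} and reduces by duality only to the half-range $q\in(\tfrac{2n}{n-1},\tfrac{2n}{n-2}]$, which is still a continuum. In your scheme, to reach a general pair you would need $(-\Delta-\zeta)^{-1}:L^{q'}\to L^{2n/(n-2)}$ or its analogues, and for $q<\tfrac{2n}{n-2}$ (resp.\ $p(q)'>\tfrac{2n}{n-2}$) the exponent jump there is no longer $2/n$, so neither the uniform resolvent bound nor the self-dual bound applies directly. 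You must, as the paper does, carry the full Lemma~\ref{simple} apparatus for each pair. Relatedly, the assertion $\beta_1(n,q)=\beta_1(n,p(q)')$ is not a duality fact; per \eqref{4.48}, $\beta_1$ is only defined for $q\in[\tfrac{2n}{n-2},\tfrac{2n}{n-3})$, and \eqref{1.16} uses $\beta_1(n,p(q)')$ when $q<\tfrac{2n}{n-2}$ precisely because $q$ itself lies outside the domain.

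Second, the extension of \eqref{1.15'} from the ``uniform Sobolev range'' to all of \eqref{1.5''} (i.e.\ to $q$ up to $\tfrac{2n}{n-4}$ when $n\ge5$, or all finite $q$ when $n=3,4$) cannot be obtained by ``interpolating against $\|A\|_{2\to2}$ and Sobolev embedding for $\Dom(H_V)$.'' Interpolation can only lower $q$ toward $2$; it cannot raise it. The paper handles the larger exponents by Theorem~\ref{thm2.3}, which is a separate frequency decomposition argument: one splits $u$ via $\beta(P/\la)$ into a spectrally localized piece (handled by Sobolev embedding from the smaller exponent $r$, using \eqref{2.65}--\eqref{2.66}) and a tail (handled by a fresh second-resolvent bootstrap with a gain of $\la^{-1}$ from the off-resonance region, \eqref{2.67}--\eqref{2.85'}); the accompanying Lemma~\ref{Sobv} is needed even to know a priori that $(H_V-\la^2+i\e\la)^{-1}f\in L^q$ for such $q$. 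Also, within your decomposition $V=V_1+V_2$ into a small $L^{n/2}$ part and a bounded part, the cross terms $V_1^{1/2}AW_2$, $V_2^{1/2}AW_1$ do not vanish and must be estimated (they are small because one factor carries $\|V_1\|_{n/2}^{1/2}$), which is a minor bookkeeping issue but should be acknowledged. Finally, note that the paper proves the $\Tn$ unperturbed inputs \eqref{4.23}--\eqref{4.24} at $q_c$ (and \eqref{4.40} at general $q$) by a nontrivial dyadic-in-time decomposition and interpolation of the kernel and decoupling bounds \eqref{4.26}--\eqref{4.27}; you cite the conclusions from \cite{Hickman,BSSY} as black boxes, which is acceptable as a strategy but means the ``only genuinely new input'' claim understates the work.
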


We shall give the explicit definition of $\beta_1(n, q)$  later in \eqref{4.48}. As we shall see, $\beta_1(n, q)$ is a number that decreases from $1/3$ to $0$ when $q$ increases from $\tfrac{2n}{n-2}$ to $\tfrac{2n}{n-3}$. Similarly, by an explicit calculation, $\beta(n, q)$ is a number that increases from $0$ to $1/3$ when $q$ increases from $\tfrac{2(n+1)}{n-1}$ to $\tfrac{2n}{n-2}$, in particular, when $q=\frac{2n}{n-2}$, $\beta_1(n, q)=\beta(n, q)=1/3$. As a result, \eqref{1.15} generalizes the uniform resolvent estimates of Hickman \cite{Hickman} to the setting of 
Schr\"odinger operators with $V\in L^{n/2}(\Tn)$, which also gives us certain uniform resolvent estimates on the torus for general pairs of exponents $(p, q)$ satisfying \eqref{1.17}. On the other hand, when $q=\frac{2n}{n-2}$, if we take $u$ in \eqref{1.15'} to be $\chi^V_{[\la,\la+\e(\la)]}f$, we have 
$$\|\chi^V_{[\la,\la+\rho)]}f\|_{L^{\frac{2n}{n-2}}(\Tn)}\le (\rho\la)^{1/2}\|f\|_{L^{2}(\Tn)}, \,\,\forall \delta_0>0, \,\,\,\rho\ge \la^{-\frac13+\delta_0},
$$
which generalizes the spectral projection estimates in \cite{Hickman} (and \cite{BSSY} for the $n=3$ case) to the setting of Schr\"odinger operators.


Theorems~\ref{nonpossob} and \ref{torussob} represent an improvement in terms of the $\e(\la)$ defining $\Omega_{\e,\delta}$ as well as the parameter occurring in the quasimode estimates \eqref{1.14}
over Theorem~\ref{unifSob} which corresponds to 
$\e(\la)\equiv 1$.  


 For the sphere no such improvement over the case where $\e(\la)\approx 1$ is possible since one cannot have
$\e(\la)\to 0$ as $\la\to +\infty$ in this case (see \cite{SHSo} and \cite{sogge86}).  Notwithstanding,  for $S^n$, we can get an improvement over
Theorems~\ref{nonpossob} and \ref{torussob}  for the uniform Sobolev estimates by obtaining bounds for the optimal
range of exponents satisfying \eqref{1.17}.
This improvement is possible due to the fact that when $M=S^n$
uniform Sobolev estimates for $H_0$
are known for this  range of exponents (see \cite{SHSo}).

\begin{theorem}\label{sphere}  Consider the standard sphere $S^n$ for $n\ge 3$ and assume
that $V\in L^{n/2} (S^n)$.  If \eqref{1.17} is valid we have
\begin{equation}\label{1.21}
\|u\|_q\le C\bigl\|\bigl(H_V-\zeta\bigr)u\bigr\|_{p(q)}, \quad \text{if } \, 
 \, \, \zeta\in \Omega_\delta,
\end{equation}
where $\Omega_\delta$ is as in \eqref{1.7}. Also, for q satisfying \eqref{1.5'}, if $\sigma(q) $ is as in \eqref{1.12} and $u\in \Dom(H_V)$
\begin{equation} \label{1.22}
\|u\|_q\lesssim \, \la^{\sigma(q)-1}\, \| (H_V-\la^2+i \la)u\|_{2},
\quad \text{if } \, \la \ge 1.
\end{equation}
\end{theorem}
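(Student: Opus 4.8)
The bound \eqref{1.22} needs no separate argument: it is precisely the special case $M=S^n$ of the quasimode estimate \eqref{qm} in Theorem~\ref{unifSob}. Thus the content of the theorem is \eqref{1.21}, i.e.\ the extension of the uniform Sobolev inequality \eqref{1.6} from the range \eqref{1.5} to the sharp Kenig--Ruiz--Sogge range \eqref{1.17} of \cite{KRS}. The plan is to run the proof of \eqref{1.6} with essentially no change, the one substitution being that every use of the unperturbed uniform Sobolev estimates for $-\Delta_g$ on a general compact manifold---available only for \eqref{1.5}, as they are deduced from the Stein--Tomas type $L^2\to L^q$ bounds of \cite{sogge88}---is replaced by the sharper estimates for $H_0=-\Delta_{S^n}$ of \cite{SHSo}: for $\delta>0$ fixed and every pair with $\tfrac1{p(q)}-\tfrac1q=\tfrac2n$ and $\min(q,p(q)')>\tfrac{2n}{n-1}$,
\[
\sup_{\zeta\in\Omega_\delta}\bigl\|(-\Delta_{S^n}-\zeta)^{-1}\bigr\|_{L^{p(q)}(S^n)\to L^q(S^n)}<\infty,
\]
together with the dual bound $L^{q'}\to L^{p(q)'}$, which follows since $-\Delta_{S^n}$ is self-adjoint and $\Omega_\delta$ is symmetric under $\zeta\mapsto\overline\zeta$.

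Granting this, I would argue as in \cite{BSS} (i.e.\ as in the proof of Theorem~\ref{unifSob}). Fix $q$ as in \eqref{1.17}. Since pointwise multiplication by $|V|^{1/2}$ carries $L^q$ into $L^2$ only at $q=\tfrac{2n}{n-2}$, I would split the potential unevenly: set $\theta=\theta(q):=\tfrac n4\bigl(1-\tfrac2q\bigr)$, which lies in $(\tfrac14,\tfrac34)$ exactly because $\tfrac{2n}{n-1}<q<\tfrac{2n}{n-3}$, and write $V=|V|^{1-\theta}\cdot\mathrm{sgn}(V)\,|V|^{\theta}$. Because $V\in L^{n/2}(S^n)$, one has $|V|^{\theta}\in L^{n/(2\theta)}$ and $|V|^{1-\theta}\in L^{n/(2(1-\theta))}$, so multiplication by $|V|^{\theta}$ maps $L^q\to L^2$ and by $|V|^{1-\theta}$ maps $L^{p(q)'}\to L^2$. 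With $R_0(\zeta)=(-\Delta_{S^n}-\zeta)^{-1}$ and $B(\zeta):=|V|^{\theta}R_0(\zeta)\,|V|^{1-\theta}\,\mathrm{sgn}(V)$, the $|V|^{\theta}$-analogue of \eqref{1.4} (valid by the same argument) gives, for $\zeta\in\Omega_\delta$,
\[
(H_V-\zeta)^{-1}=R_0(\zeta)-\bigl(R_0(\zeta)\,|V|^{1-\theta}\bigr)\,\mathrm{sgn}(V)\,\bigl(I+B(\zeta)\bigr)^{-1}\bigl(|V|^{\theta}R_0(\zeta)\bigr).
\]
By \eqref{1.17}, \cite{SHSo} and H\"older's inequality, $|V|^{\theta}R_0(\zeta)\colon L^{p(q)}\to L^2$ and $R_0(\zeta)\,|V|^{1-\theta}\colon L^2\to L^q$ are bounded uniformly in $\zeta\in\Omega_\delta$; and factoring $B(\zeta)$ as $L^2\to L^{r_1}\to L^{r_2}\to L^2$ one checks that the intermediate pair again satisfies $\tfrac1{r_1}-\tfrac1{r_2}=\tfrac2n$ and $\min(r_2,r_1')>\tfrac{2n}{n-1}$ precisely when $\theta\in(\tfrac14,\tfrac34)$, so $B(\zeta)\colon L^2\to L^2$ is also bounded uniformly, with norm $\lesssim\|V\|_{L^{n/2}(S^n)}$. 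Hence \eqref{1.21} reduces to
\[
\sup_{\zeta\in\Omega_\delta}\bigl\|(I+B(\zeta))^{-1}\bigr\|_{L^2(S^n)\to L^2(S^n)}<\infty,
\]
and this uniform invertibility of the Birman--Schwinger operator is the step I expect to be the main obstacle.

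For that step I would proceed as follows. First, $B(\zeta)$ is compact on $L^2(S^n)$ for each $\zeta$: truncating $V_N:=V\mathbf{1}_{|V|\le N}\in L^\infty$ makes $B(\zeta)$ an operator-norm limit of operators $B_N(\zeta)$ that factor through the compact embedding $H^2(S^n)\hookrightarrow L^2(S^n)$, so $I+B(\zeta)$ is Fredholm of index zero. Second, $I+B(\zeta)$ is injective for $\zeta\in\Omega_\delta$: if $(I+B(\zeta))g=0$ then $u:=-R_0(\zeta)\bigl(|V|^{1-\theta}\mathrm{sgn}(V)g\bigr)\in L^2(S^n)$ is a distributional solution of $(H_V-\zeta)u=0$ with $|V|^{\theta}u=g$; since $\mathrm{Spec}\,H_V\subset\R_+$ is discrete and $\Omega_\delta\cap\R_+=\varnothing$, we have $\zeta\notin\mathrm{Spec}\,H_V$, so $u\equiv0$ and $g=0$. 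Consequently $I+B(\zeta)$ is invertible for every $\zeta\in\Omega_\delta$, and $\zeta\mapsto(I+B(\zeta))^{-1}$ is norm-continuous, hence bounded on $\Omega_\delta\cap\{|\zeta|\le R\}$. Third, for $|\zeta|$ large: as $\zeta\to\infty$ within $\Omega_\delta$ one has $\mathrm{dist}(\zeta,\mathrm{Spec}\,H_0)\ge\mathrm{dist}(\zeta,\R_+)\to\infty$, so $\|R_0(\zeta)\|_{L^2\to L^2}\to0$ and thus $\|B_N(\zeta)\|_{L^2\to L^2}\le\|V_N\|_{L^\infty}\|R_0(\zeta)\|_{L^2\to L^2}\to0$ for each fixed $N$; together with $\sup_{\zeta\in\Omega_\delta}\|B(\zeta)-B_N(\zeta)\|_{L^2\to L^2}\to0$ as $N\to\infty$, this forces $\|B(\zeta)\|_{L^2\to L^2}\to0$, so $\|(I+B(\zeta))^{-1}\|_{L^2\to L^2}\le2$ once $|\zeta|$ is large. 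Combining the two regimes yields the desired uniform bound, and then \eqref{1.21} follows at once from the displayed resolvent identity and the uniform bounds on its three factors.
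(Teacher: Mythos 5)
Your proposal takes a genuinely different route from the paper. The paper proves Theorem~\ref{sphere} by feeding the sharp $S^n$ uniform Sobolev and spectral projection estimates of \cite{SHSo}, \cite{sogge86} into the abstract Theorem~\ref{thm2.1} together with Corollary~\ref{corr2.2} and Theorem~\ref{thm2.3}. The proof of Theorem~\ref{thm2.1} contains no Birman--Schwinger/Fredholm argument; instead it splits $V$ by \emph{size}, $V = V_{\le N} + V_{>N}$, applies the second resolvent formula \eqref{1.4} with the fixed factorization $|V|^{1/2}\cdot V^{1/2}$, and runs a quantitative absorption/bootstrap: the high part is absorbed because $\|V_{>N}\|_{L^{n/2}}\to0$, the bounded part is controlled by a second use of \eqref{1.4} together with the $L^2$-based quasimode bounds \eqref{2.2}, and the iteration is justified by the a priori finiteness \eqref{2.12} coming from \eqref{s51}. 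That argument is constructive, gives explicit constants, and is built to be reused (it is exactly what powers the $\e(\la)\to0$ improvements in Theorems~\ref{nonpossob} and \ref{torussob}). Your scheme instead splits $V$ by a $q$-dependent \emph{power}, $V=|V|^{1-\theta}\,\mathrm{sgn}(V)\,|V|^{\theta}$ with $\theta=\tfrac n4(1-\tfrac2q)\in(\tfrac14,\tfrac34)$ (correct algebra, and the intermediate pair $(r_1,r_2)$ does land inside the range of \cite{SHSo}), sets up a Birman--Schwinger operator $B(\zeta)$ on $L^2$, and obtains $\sup_{\zeta\in\Omega_\delta}\|(I+B(\zeta))^{-1}\|<\infty$ from compactness, injectivity off $\mathrm{Spec}\,H_V$, and decay of $\|B(\zeta)\|$ as $|\zeta|\to\infty$. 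This is a clean, ``soft'' alternative, but it yields no effective constant and does not carry over to the quantitative $\e(\la)$-improved settings elsewhere in the paper.

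There is one step you should not wave away. From $(I+B(\zeta))g=0$ you put $u=-R_0(\zeta)\bigl(|V|^{1-\theta}\mathrm{sgn}(V)g\bigr)$ and conclude $u\equiv0$ because $\zeta\notin\mathrm{Spec}\,H_V$. But the regularity you actually obtain from the uniform Sobolev estimate is $u\in L^q(S^n)\cap W^{2,p(q)}(S^n)$, and for $q<\tfrac{2n}{n-2}$ this does \emph{not} place $u$ in $H^1(S^n)=\mathrm{Dom}(\sqrt{-\Delta_g+1})$, hence not in $\mathrm{Dom}(H_V)$ as defined in the Appendix, so you cannot simply invoke the spectral theorem. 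To finish you must justify the weak formulation --- show that $\langle u,(H_V-\bar\zeta)\phi\rangle=0$ for all $\phi$ in an \emph{operator} core for $H_V$ and then use $\overline{\mathrm{Ran}(H_V-\bar\zeta)}=L^2$. Proposition~\ref{self-adjoint} gives $C^\infty(S^n)$ only as a \emph{form} core when $V\in L^{n/2}$, so this needs an additional argument (or a different route, e.g.\ taking imaginary parts of the bilinear form, which again requires checking that the pairings make sense with only $u\in L^q$). Likewise the uniform operator-norm approximation $\sup_{\zeta\in\Omega_\delta}\|B(\zeta)-B_N(\zeta)\|_{L^2\to L^2}\to0$ should be spelled out (it reduces to $\||V|\mathbf{1}_{|V|>N}\|_{L^{n/2}}\to0$ via H\"older and the uniform Sobolev bound for $R_0$, and is fine, but it is the point where $V\in L^{n/2}$ is actually used). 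Both points are fixable, but they are precisely what the paper's restriction to $\tfrac{2n}{n-1}<q\le\tfrac{2n}{n-2}$ plus the a priori bound \eqref{s51} are designed to sidestep. Your observation that \eqref{1.22} is literally the $M=S^n$ case of \eqref{qm} and needs no new argument is correct; the paper's re-derivation of it through the abstract machinery is only for uniformity of exposition.
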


It would be interesting to see if the  uniform Sobolev bounds \eqref{1.21} are universally true or hold for generic Riemannian manifolds. 

 
The study of Schr\"odinger operators can be found in a vast amount of literatures, especially in the Euclidean case, see e.g, \cite{jensen1979spectral}, \cite{journe1991decay}, \cite{rodnianski2004time}. In a companion paper \cite{HuS2} the second and fourth authors will obtain related uniform Sobolev estimates for $\Rn$ which
improve those in \cite{BSS} and provide natural generalizations of those in \cite{KRS}.

The authors are grateful to R.\ Frank and J.\ Sabin for sharing their recent work which influenced this paper.  We are grateful
to R.\ Frank for helpful suggestions which helped us to weaken the hypothesis on our potentials, and also to the referees for several helpful suggestions which improved our exposition.

\newsection{Universal Sobolev inequalities on compact manifolds: Abstract universal bounds}\label{abstractsec}

The purpose  of this section is to prove  simple abstract
theorems that will allow us to 
prove Theorems \ref{unifSob}--\ref{sphere}, and to also
improve  the quasimode
estimates of \cite{BSS} for the operators $H_V$, 
provided that 
we have the analogous improved estimates (quasimode and uniform Sobolev) for the
unperturbed operators $H_0=-\Delta_g$.  


%
Throughout this section we shall assume that $n\ge3$
since we shall be using uniform Sobolev estimates for $-\Delta_g$
which break down in two-dimensions.  We shall obtain
improved quasimode estimates compared to those in \cite{BSS} later by adapting the arguments here.

In this section we shall consider a pair of exponents $(p,q)$ which are among those in the sharp range of exponents in the uniform Sobolev estimates
in \cite{KRS} for the Euclidean case, i.e.,
$1<p<2<q<\infty$, and, moreover,
\begin{equation}\label{2.1}
\tfrac1p-\tfrac1q=\tfrac2n, \quad
\min(q,p')>\tfrac{2n}{n-1}.
\end{equation}
For later use, observe that if the pair $(p,q)$ is as in \eqref{2.1} then so is $(q',p')$.  We also note that if 
$(p,q)$ is as in \eqref{2.1} then $\tfrac{2n}{n-1}<q<\tfrac{2n}{n-3}$.

For both of the exponents in \eqref{2.1}, we shall assume that
we have improvements of the classical quasimode
estimates of the fourth author \cite{sogge88} of the form
\begin{multline}\label{2.2}
\|u\|_r\le C \delta(\la,r) \,
\la^{\sigma(r)-1} \, \bigl(\e(\la)\bigr)^{-1}
\, \bigl\|(-\Delta_g-\la^2+i\e(\la) \la) u\bigr\|_2,
\\
\text{for } \, \, r=q, \, p' \quad \text{and } \, \la \ge 1.
\end{multline}
where 
$\sigma(r)$ is as in \eqref{1.12}.
The  $\delta(\la,r)$ and $\e(\la)$ are assumed to be continuous functions of 
$\la\in [1,\infty)$. In practice
they are nonpositive powers of $\la$ or $\log(2+\la)$.

In order to have improvements over the results in
\cite{sogge88} for $\e(\la)\equiv 1$ we shall assume
that
\begin{equation}\label{2.3}
\e(\la)\searrow  \, \, 
\, \,  \text{and }\, \,
\e(\la)\in [1/\la,1], \, \, \la\ge 1.
\end{equation}
We make the assumption that $\e(\la)\ge 1/\la$
since on compact manifolds it is unreasonable to expect meaningful bounds of
the form \eqref{2.2} when $\e(\la)$ is smaller than
the associated wavelength $1/\la$ with $\la$ large.
The estimates in \cite{sogge88} and the spectral theorem
imply that \eqref{2.2} is valid when $\delta(\la,r)\equiv 1$, and so we shall also assume that
\begin{equation}\label{2.4}
(\e(\la))^{1/2}\le \delta(\la,r)\le 1
\quad \text{and } \, \, 
\delta(\la,r)\searrow \, , \, \, \la \ge 1.
\end{equation}
We assume that $\delta(\la,r)\ge (\e(\la))^{1/2}
$ since, by (5.1.12) and (5.1.13) in \cite{SFIO2}, \eqref{2.2}
cannot hold if $(\e(\la))^{1/2}/\delta(\la,r)\to \infty$
as $\la\to +\infty$.  

Note that  \eqref{1.13} corresponds to the ``critical case'' where
$\delta(\la,r)=(\e(\la))^{1/2}$ for $\e(\la)$ as in \eqref{1.11} in the case of manifolds of nonpositive
curvature, as do the results of \cite{BSSY} for $n=3$ and \cite{Hickman} for $n\ge 4$ with a more favorable numerology
on tori.

Although a bit more cryptic at first glance, it is also natural to assume that
\begin{equation}\label{2.5}
\limsup_{\la\to \infty} \, 
\la^{\sigma(q)+\sigma(p')-2}
\, \bigl(\e(\la)\bigr)^{-2}\, 
\delta(\la,q)\, \delta(\la,p')=0.
\end{equation}
This condition arises naturally in the proofs, and
one can check that, for the the exponents in \eqref{2.1},
it holds for the special case where $\e(\la)=
\delta(\la,q)=\delta(\la,p')\equiv 1$, which will
be a useful observation when we prove certain estimates
on $S^n$.  Also, by the first part of \eqref{2.4},
we have \eqref{2.5} if 
\begin{equation}\label{2.5'}\tag{2.5$'$}
\limsup_{\la\to \infty} \, \la^{\sigma(q)+\sigma(p')-2}
\, \bigl(\e(\la)\bigr)^{-2}\, =0,
\end{equation}
which is a bit more palatable.  

In addition to these quasimode estimates we shall assume
that we have the related uniform Sobolev estimates
for the unperturbed operators:
\begin{equation}\label{2.6}
\|u\|_q\le C_{\delta_0}\bigl\|(-\Delta_g-\la^2 +i\mu \e(\la)\la))u\bigr\|_p, 
\, \, \text{when } \, \, \, 
\la \ge 1 \, \, \,  \text{and } \, \, |\mu|\ge \delta_0,
\end{equation}
if $\delta_0>0$.  
Here and in what follows $\mu\in \R$.
Similar to the remark after \eqref{2.1}, observe that if $(p,q)$ are exponents for which 
\eqref{2.6} is valid, then, by duality this is also true for the pair $(q',p')$.

The abstract theorem that will allow us to prove Theorems~\ref{unifSob}--\ref{sphere} then is the following.

\begin{theorem}\label{thm2.1}  Assume  $(M,g)$ is a compact Riemannian
manifold of dimension $n\ge3$.  Assume further that $(p,q)$  is a pair of exponents 
 satisfying \eqref{2.1}.   Suppose further that \eqref{2.2}, \eqref{2.5}
and \eqref{2.6}
are valid with $\e(\la)$, $\delta(\la,r)$, 
 satisfying \eqref{2.3} and \eqref{2.4}, respectively with $r=p',q$ in the latter.
Then if $V\in L^{n/2}(M)$ we have
\begin{equation}\label{2.7}
\|u\|_q\le C\bigl\|(-\Delta_g+V-\la^2+i\mu \e(\la)\la)u\bigr\|_p, 
 \, \, \, \text{if } \, \, 
|\mu|\ge 1 \, \, \text{and } \, \, 
\la \ge \Lambda,
\end{equation}
assuming that
$\Lambda=\Lambda(M,q,V)\ge 1$ sufficiently large.
\end{theorem}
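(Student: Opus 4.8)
The plan is to derive \eqref{2.7} from the hypotheses \eqref{2.2}, \eqref{2.5}, \eqref{2.6} on the unperturbed operator by combining the resolvent identity with an $L^{n/2}$ splitting of $V$ and a two–scale absorption argument that passes through $\|u\|_2$. Fix $(p,q)$ as in \eqref{2.1}. For $|\mu|\ge1$ and $\la\ge1$ set $\zeta=\zeta_\la:=\la^2-i\mu\e(\la)\la$; then $\Im\zeta\ne0$, and since $|\mu|\ge1$ one has $\zeta\in\Omega_{\e,1}$ as in \eqref{1.10}, so that \eqref{2.6} is available with $\delta_0=1$. Write $R_0(\zeta)=(-\Delta_g-\zeta)^{-1}$. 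Given $u\in\Dom(H_V)$, put $f:=(-\Delta_g+V-\zeta)u$; since $u\in\Dom(H_V)$ we have $f\in L^2(M)\subset L^p(M)$, and we must prove $\|u\|_q\le C\|f\|_p$. It suffices to do this for the bounded truncations $V_N:=V\,\mathbf 1_{\{|V|\le N\}}$ with a constant independent of $N$ and then let $N\to\infty$ (using that $H_{V_N}$ has domain $H^2(M)$, that $R_{V_N}(\zeta)f\to R_V(\zeta)f$ in $L^2$, and Fatou's lemma); so from now on I may assume $\|u\|_q<\infty$, $Vu\in L^2$, and the resolvent identity $u=R_0(\zeta)(f-Vu)$.

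First I would unwind what the hypotheses give for $-\Delta_g$. Estimate \eqref{2.6} gives $\|R_0(\zeta)\|_{L^p\to L^q}\le C_1$ uniformly in $\la$. Estimate \eqref{2.2} is stated for $\mu=1$, but the functional–calculus identity $R_0(\la^2-i\mu\e(\la)\la)=R_0(\la^2-i\e(\la)\la)\,m(-\Delta_g)$, with $m(\tau)=(\tau-\la^2+i\e(\la)\la)(\tau-\la^2+i\mu\e(\la)\la)^{-1}$ satisfying $\sup_{\tau\ge0}|m(\tau)|\le1$ precisely because $|\mu|\ge1$, transfers the $\mu=1$ bounds to all $|\mu|\ge1$ at no cost; together with duality this yields, for our $\zeta$,
\[
\|R_0(\zeta)\|_{L^2\to L^q}\le C\,\delta(\la,q)\,\la^{\sigma(q)-1}\e(\la)^{-1},\qquad\|R_0(\zeta)\|_{L^p\to L^2}\le C\,\delta(\la,p')\,\la^{\sigma(p')-1}\e(\la)^{-1}.
\]

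Now split $V=V_b+V_r$ with $V_b\in L^\infty(M)$ and $\|V_r\|_{L^{n/2}(M)}<\eta$, where $\eta>0$ is chosen in terms of $C_1$ only and then $\|V_b\|_\infty=\|V_b\|_\infty(\eta,V)$ is fixed (this uses density of $C^\infty(M)$ in $L^{n/2}(M)$, and, in the truncation reduction, the common choice $V_b=V\,\mathbf 1_{\{|V|\le M_0\}}$). Applying $R_0(\zeta)$ to $f-Vu=f-V_bu-V_ru$, estimating the $f$– and $V_r$–terms by \eqref{2.6} — using $\|V_ru\|_p\le\|V_r\|_{L^{n/2}}\|u\|_q$, valid since $\tfrac1p-\tfrac1q=\tfrac2n$ — and the $V_b$–term by $\|R_0(\zeta)\|_{L^2\to L^q}\|V_b\|_\infty\|u\|_2$, one gets
\[
\|u\|_q\le C_1\|f\|_p+C_1\eta\,\|u\|_q+C\,\|V_b\|_\infty\,\delta(\la,q)\la^{\sigma(q)-1}\e(\la)^{-1}\,\|u\|_2 .
\]
Separately, applying the $L^p\to L^2$ bound to $R_0(\zeta)f$ and to $R_0(\zeta)(Vu)$ and using $\|Vu\|_p\le\|V\|_{L^{n/2}}\|u\|_q$,
\[
\|u\|_2\le C\,\delta(\la,p')\la^{\sigma(p')-1}\e(\la)^{-1}\bigl(\|f\|_p+\|V\|_{L^{n/2}}\|u\|_q\bigr).
\]
Substituting the second estimate into the first produces the factor $\Theta(\la):=\delta(\la,q)\delta(\la,p')\,\la^{\sigma(q)+\sigma(p')-2}\,\e(\la)^{-2}$ multiplying the leftover $\|u\|_q$ and $\|f\|_p$ terms, and by \eqref{2.5} — for which, by \eqref{2.4}, the simpler condition \eqref{2.5'} already suffices — we have $\Theta(\la)\to0$ as $\la\to\infty$. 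Choosing $\eta$ so that $C_1\eta\le\tfrac14$ and then $\Lambda=\Lambda(M,q,V)$ so large that $C^2\|V_b\|_\infty(1+\|V\|_{L^{n/2}})\,\Theta(\la)\le\tfrac14$ for $\la\ge\Lambda$, the coefficient of $\|u\|_q$ on the right becomes $\le\tfrac12$ and that of $\|f\|_p$ remains bounded; since $\|u\|_q<\infty$, we absorb and obtain $\|u\|_q\le C\|f\|_p$, which is \eqref{2.7}.

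The part I expect to be the main obstacle is not the resolvent algebra, which is elementary, but arranging this absorption to close: the contribution of the bounded part $V_b$ of the potential cannot be made small by itself, since it is controlled only through the $L^2\to L^q$ quasimode bound for $-\Delta_g$, whose constant may grow like a power of $\la$; one is therefore forced to route that term through the auxiliary quantity $\|u\|_2$, where the $L^p\to L^2$ quasimode bound turns the growth into the product $\Theta(\la)$ that hypothesis \eqref{2.5} is tailored to annihilate. A secondary technical point, hidden in the reduction of the first paragraph, is justifying the resolvent identity and the a priori finiteness $\|u\|_q<\infty$ for $u\in\Dom(H_V)$ when $n=3,4$ — where $\Dom(H_V)$ need not equal $H^2(M)$ — which is exactly why one first argues for bounded truncations of $V$ and then passes to the limit.
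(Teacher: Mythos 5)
Your proof is correct and closes the absorption correctly, but it takes a genuinely different route from the paper on the technical points you yourself flag at the end. The paper handles the domain issue for $n=3,4$ (where $\Dom(H_V)$ need only be the form domain, not $H^2$) by invoking the \emph{factored} second resolvent formula \eqref{1.4}, with $|V|^{1/2}$ on one side and $V^{1/2}$ on the other; this in turn forces a two--level truncation $V_{\le N_1}, V_{\le N_2}$ of the potential (terms $II$, $III$, $IV$ of \eqref{2.16}) because both square--root factors must be split. It also needs the a priori finiteness $u\in L^q$ for $q\le \tfrac{2n}{n-2}$ from \eqref{s51}, together with a duality reduction to $q\le\tfrac{2n}{n-2}$, to justify the bootstrap. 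You instead reduce to bounded truncations $V_N$, for which $\Dom(H_{V_N})=H^2(M)$ and the unfactored identity $u=R_0(\zeta)(f-V_Nu)$ is available at face value, then pass to the limit via form/resolvent convergence and Fatou. That lets you keep the simpler single split $V=V_b+V_r$ and avoid both \eqref{1.4} and the duality reduction. What your route buys is a cleaner algebra (one splitting rather than two, no $|V|^{1/2}$/$V^{1/2}$ bookkeeping) at the cost of a regularization--and--limit step that the paper sidesteps; the paper's route is self--contained at fixed $V$ but carries the factored formula through, and indeed the remark after Theorem~\ref{thm2.1} points out that your style of argument (simple resolvent identity, single split) is essentially what one would do for $n\ge5$ using \eqref{1.4'}. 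The core mechanism is identical in both: the bounded part of $V$ is uncontrollable in $L^p\to L^q$ alone and must be routed through $\|u\|_2$ via the $L^2\to L^q$ and $L^p\to L^2$ quasimode bounds, producing exactly the quantity $\Theta(\la)$ that \eqref{2.5} kills for large $\la$. Your $\mu$--transfer via the functional--calculus multiplier $m(\tau)$ with $\sup|m|\le1$ is the same fact the paper invokes as \eqref{spectral1}. One small point worth being explicit about in the limiting step: the uniformity in $N$ of the constants ($\eta$, $\|V_b\|_\infty$, $\Lambda$) holds because for $N\ge M_0$ one has $V_N=V_b+V\mathbf 1_{\{M_0<|V|\le N\}}$ with the tail still $<\eta$ in $L^{n/2}$ and $\|V_N\|_{L^{n/2}}\le\|V\|_{L^{n/2}}$; and the strong resolvent convergence $R_{V_N}(\zeta)f\to R_V(\zeta)f$ should be cited to the KLMN/form--perturbation framework, since it is a consequence of $\|V-V_N\|_{L^{n/2}}\to 0$ plus the form bound \eqref{s1}, rather than being immediate.
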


The assumption that $\lambda$  in \eqref{2.7} is large
arises for  technical reasons from the fact that since we only are assuming that $V\in L^{n/2}$, we only know via \eqref{s51} in the appendix that $u\in L^q(M)$ for $q\le \tfrac{2n}{n-2}$ if $u\in \Dom(H_V)$.  On the other hand, after proving Theorem~\ref{thm2.1} we can use its proof to establish
the following much more favorable results.

\begin{corr}\label{corr2.2}
Assume the hypotheses in Theorem~\ref{thm2.1}.  Then for $u\in \Dom H_V$ 
\begin{multline}\label{2.8}
\|u\|_r\le C_{V,r} \,  \delta(\la,r) \,
\la^{\sigma(r)-1} \, \bigl(\e(\la)\bigr)^{-1}
\, \bigl\|(-\Delta_g+V-\la^2+i\e(\la) \la) u\bigr\|_2,
\\
\text{if } \, \, 
\la \ge 1 \, \, \text{and } \, \, 
r=q \, \, \text{or } \, \, r=p'.
\end{multline}
Additionally,
\begin{equation}\label{2.9}
\|u\|_r\le C_{\delta,V,r}\bigl\|(-\Delta_g+V-\la^2 +i\mu \e(\la)\la))u\bigr\|_s, 
\, \, \text{when } \, \, \, 
\la \ge 1 \, \, \,  \text{and } \, \, |\mu|\ge \delta_0,
\end{equation}
if $\delta_0>0$ and $(r,s)=(q,p)$ or $(p',q')$.
\end{corr}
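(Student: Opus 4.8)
The plan is to bootstrap Corollary~\ref{corr2.2} from the proof of Theorem~\ref{thm2.1} rather than from its statement. The only obstacle to applying Theorem~\ref{thm2.1} directly is that \eqref{2.7} is stated for $\la\ge\Lambda$ large, because a priori we only know $u\in L^{2n/(n-2)}(M)$ when $u\in\Dom(H_V)$ via the appendix estimate \eqref{s51}; so the first task is to remove this restriction. I would argue as follows. Fix $\la_0\ge 1$. We want the bounds in \eqref{2.8} and \eqref{2.9} uniformly for $1\le\la\le\Lambda$ as well. Since $q\le\tfrac{2n}{n-2}$ (which follows from \eqref{2.1}), for $u\in\Dom(H_V)$ we already have $u\in L^q(M)$ with an a priori bound, so the left sides of \eqref{2.8}--\eqref{2.9} are finite; what must be shown is that they are controlled by the stated right-hand sides with a constant independent of $\la$ in the bounded range. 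For a compact $\la$-interval this is a routine perturbation/compactness argument: the map $\zeta\mapsto (-\Delta_g+V-\zeta)^{-1}$ is operator-norm continuous from $L^p(M)\to L^q(M)$ (resp.\ $L^2\to L^q$) on the relevant region once we know it is bounded for a single $\zeta$ there, which is exactly what the proof of Theorem~\ref{thm2.1} gives at, say, $\zeta=\la^2-i\mu\e(\la)\la$ for one admissible $\la\ge\Lambda$, combined with the resolvent identity and the self-adjointness/discrete-spectrum facts from the appendix ruling out that $\la^2$ could be an eigenvalue off the prescribed contour.

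More efficiently, I would simply re-run the proof of Theorem~\ref{thm2.1} keeping track of which inputs need $\la$ large. In that proof one writes, via the second resolvent formula \eqref{1.4} (or \eqref{1.4'}),
\[
u = (-\Delta_g-\zeta)^{-1}F - (-\Delta_g-\zeta)^{-1}\,V\,u,\qquad F=(H_V-\zeta)u,
\]
and absorbs the $Vu$ term using Hölder together with the $L^{n/2}$ smallness of $V$ on a set where $|V|$ is large, plus the unperturbed uniform Sobolev estimate \eqref{2.6} (for \eqref{2.9}) or the unperturbed quasimode estimate \eqref{2.2} (for \eqref{2.8}). The largeness of $\la$ was used only to make the perturbative term have small operator norm $o(1)$ uniformly; but once Theorem~\ref{thm2.1} is known, the full resolvent $(H_V-\zeta)^{-1}\colon L^p\to L^q$ is bounded for all $\la\ge\Lambda$, and feeding \emph{this} bound (rather than the unperturbed one) back into the identity together with \eqref{2.5} — which encodes precisely that the composite operator norm tends to $0$ — lets one run the argument for the quasimode $L^2\to L^q$ version and for the full range $\la\ge1$ by a standard continuity/connectedness argument in $\la$: the set of $\la$ for which the desired bound holds with a fixed constant is open and closed in $[1,\infty)$, hence all of it. The duality remark after \eqref{2.6} gives the $(p',q')$ and $(q',p')$ versions for free, since $(p,q)$ as in \eqref{2.1} implies $(q',p')$ is too.

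Concretely, the steps in order: (i) record that $q\le\tfrac{2n}{n-2}$ so $\Dom(H_V)\subset L^q(M)$ with an a priori (possibly $\la$-dependent) bound; (ii) from Theorem~\ref{thm2.1} and the resolvent identity \eqref{1.4}, deduce that $(H_V-\zeta)^{-1}$ is bounded $L^p(M)\to L^q(M)$, uniformly for $|\mu|\ge1$, $\la\ge\Lambda$, giving \eqref{2.9} there; (iii) split $V=V\mathbf 1_{|V|\le A}+V\mathbf 1_{|V|>A}$, choose $A=A(\eps)$ so that $\|V\mathbf 1_{|V|>A}\|_{n/2}<\eps$, and insert this into the resolvent formula: the large-$|V|$ part is absorbed by the just-established $L^p\to L^q$ bound and a Hölder estimate (the gain being the small $L^{n/2}$ norm), while the bounded part contributes a lower-order term handled by \eqref{2.6} or by the $L^2\to L^q$ estimate \eqref{2.2} together with \eqref{2.5}; this yields \eqref{2.8} and extends \eqref{2.9} to all $\la\ge1$; (iv) invoke the duality symmetry of \eqref{2.1} and \eqref{2.6} to obtain the $(p',q')$ statements. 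The main obstacle is step (iii): one has to verify that the composition
\[
\bigl[\,|V|^{1/2}(-\Delta_g-\bar\zeta)^{-1}\,\bigr]^{*}\circ\bigl[\,V^{1/2}(H_V-\zeta)^{-1}\,\bigr]
\]
has $L^2\to L^q$ (resp.\ $L^p\to L^q$) operator norm that is genuinely $o(1)$ as $\la\to\infty$ and $<1$ for all $\la\ge1$ after the large-$|V|$ truncation — this is where hypothesis \eqref{2.5} (equivalently \eqref{2.5'}) and the monotonicity assumptions \eqref{2.3}–\eqref{2.4} on $\e(\la),\delta(\la,r)$ are consumed, and where care is needed because $q$ and $p'$ play asymmetric roles in the two factors.
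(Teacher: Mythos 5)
Your overall strategy (bootstrap through the second resolvent formula, truncate $V$ to split the perturbation into a small-$L^{n/2}$ part and a bounded part, feed Theorem~\ref{thm2.1} back in as a new input) is the paper's strategy, and steps (ii)--(iv) are essentially correct as far as they go. But the step you leave vaguest is exactly the one the corollary is about, namely removing the $\la\ge\Lambda$ restriction, and the two devices you offer for it do not work.

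The ``the set of $\la$ for which the desired bound holds with a fixed constant is open and closed in $[1,\infty)$'' argument is not valid: nothing forces that set to be closed, and even openness would only give that the bound holds near a point with a \emph{degraded} constant, which is useless for a uniform conclusion. The ``routine perturbation/compactness'' claim via the first resolvent identity also has a gap: writing $(H_V-\zeta)^{-1}=(H_V-\zeta_0)^{-1}+(\zeta-\zeta_0)(H_V-\zeta_0)^{-1}(H_V-\zeta)^{-1}$ and trying to deduce $L^{p}\to L^{q}$ boundedness of the left side from boundedness at $\zeta_0$ requires some a priori mapping property of $(H_V-\zeta)^{-1}$ (e.g.\ on $L^p$ or $L^{p}\to L^2$), which is precisely what one is trying to prove; for $L^{n/2}$ potentials this is not free, and the whole point of the section is that it is delicate. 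One also cannot claim $q\le\tfrac{2n}{n-2}$ ``follows from \eqref{2.1}'' — \eqref{2.1} allows $q$ up to $\tfrac{2n}{n-3}$; that restriction comes from the duality reduction, which you do invoke later but conflate here.

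What the paper actually does to cross the $\la<\Lambda$ barrier is a clean spectral-theorem estimate \eqref{spectral}: for $1\le\la\le\Lambda$,
$\|(H_V-\la^2+i\e(\la)\la)^{-1}f\|_{2}\le C_\Lambda\|(H_V-\Lambda^2+i\e(\Lambda)\Lambda)^{-1}f\|_{2}$,
which reduces the $L^{q'}\to L^2$ bound (the dual form \eqref{2.24} of \eqref{2.8}) on the whole compact interval to the single case $\la=\Lambda$ already handled by Theorem~\ref{thm2.1}; one then observes that $\delta(\la,q)\la^{\sigma(q)-1}(\e(\la))^{-1}$ is bounded below by a positive constant on $[1,\Lambda]$ (continuity plus $\delta,\e>0$), so the fixed $\Lambda$-constant is absorbed. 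This gives \eqref{2.8} for all $\la\ge1$. Only then does the paper prove \eqref{2.9}: it re-runs the splitting \eqref{2.16}, but now the term $IV$ is not \emph{absorbed} — instead it is bounded outright using \eqref{l.2}, the $L^p\to L^2$ resolvent bound \eqref{2.29} (which is the dual of the just-proved \eqref{2.8}), and the fact that the $\la$-dependent quantity in \eqref{2.5} is bounded (not small) uniformly over $\la\ge1$. That change — from needing smallness of $IV$ to merely needing boundedness — is the structural reason \eqref{2.9} holds for all $\la\ge1$, and it depends on having \eqref{2.8} in hand first. Your proposal has the right ingredients but inverts and blurs this dependency, and the two glosses offered in place of \eqref{spectral} do not close the argument.
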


To prove these results we shall appeal to the following simple lemma.

\begin{lemma}\label{simple}  Assume that $n\ge3$.  Let $(p,q)$ be as in \eqref{2.1} and $W\in L^n(M)$.  Then
if \eqref{2.6} is valid
\begin{multline}\label{l.1}
\bigl\| \, \bigl[ W\, (-\Delta_g-\la^2 -i\mu \e(\la)\la)^{-1}\bigr]^*\, \bigr\|_{L^{\overline{p}}(M) \to L^q(M)} 
\\
\le C_{\delta_0}\|W\|_{L^n(M)}, \quad \text{if } \, \frac1{\overline{p}}=\frac1p-\frac1n,
\end{multline}
and, if \eqref{2.2} is valid for $r=s'$
\begin{multline}\label{l.1'}
\bigl\| \, \bigl[ W\, (-\Delta_g-\la^2 -i\mu \e(\la)\la)^{-1}\bigr]^*\, \bigr\|_{L^{\overline{s}}(M) \to L^2(M)} 
\\
\le C\|W\|_{L^n(M)} \delta(\la,s') \la^{\sigma(s')-1} \, \bigl(\e(\la)\bigr)^{-1}, \quad
\text{if } \, \, \frac1{\overline{s}}=\frac1s-\frac1n.
\end{multline}
Finally, if \eqref{2.2} is valid for $r=q$ and if $W\in L^\infty(M)$
\begin{multline}\label{l.2}
\bigl\| \, \bigl[ W\, (-\Delta_g-\la^2 -i\mu \e(\la)\la)^{-1}\bigr]^*\, \bigr\|_{L^{2}(M) \to L^q(M)} 
\\
\le C\|W\|_{L^\infty(M)} \, \delta(\la,q) \, \la^{\sigma(q)-1} \, \bigl(\e(\la)\bigr)^{-1}.
\end{multline}
\end{lemma}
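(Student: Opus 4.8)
The three bounds \eqref{l.1}, \eqref{l.1'}, \eqref{l.2} are all of the same shape: we must control the operator norm of $\bigl[W\,(-\Delta_g-\la^2-i\mu\e(\la)\la)^{-1}\bigr]^*$ between various $L^r$ spaces. The plan is to pass to the adjoint: by duality, for any exponents $1<a\le b<\infty$,
\[
\bigl\|\,[W\,R_\la]^*\,\bigr\|_{L^b\to L^{a'}}=\bigl\|\,W\,R_\la\,\bigr\|_{L^a\to L^{b'}},
\]
where I write $R_\la=(-\Delta_g-\la^2-i\mu\e(\la)\la)^{-1}$ (and note $\bar\zeta$ versus $\zeta$ is irrelevant for operator norms since $R_\la^*$ corresponds to conjugating the spectral parameter). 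Thus \eqref{l.1} is equivalent to the bound $\|W R_\la\|_{L^{q'}\to L^{\overline p\,{}'}}\le C_{\delta_0}\|W\|_{L^n}$, \eqref{l.1'} to $\|W R_\la\|_{L^2\to L^{\overline s\,{}'}}\le C\|W\|_{L^n}\delta(\la,s')\la^{\sigma(s')-1}\e(\la)^{-1}$, and \eqref{l.2} to $\|W R_\la\|_{L^{q'}\to L^2}\le C\|W\|_{L^\infty}\delta(\la,q)\la^{\sigma(q)-1}\e(\la)^{-1}$.

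**Key steps.** First I would factor $W\,R_\la = W\circ R_\la$ and handle the $R_\la$ factor using the hypotheses, then multiply by $W$ using Hölder. For \eqref{l.1}: by hypothesis \eqref{2.6} (applied to the pair $(q',p')$, which is admissible by the duality remark after \eqref{2.6}) we have $R_\la\colon L^{q'}\to L^{p'}$ with norm $\le C_{\delta_0}$; then since $1/n+1/p'=1/\overline p\,{}'$ (equivalently $1/\overline p=1/p-1/n$, i.e. $1/\overline p\,{}'=1-1/p+1/n=1/p'+1/n$), multiplication by $W\in L^n$ maps $L^{p'}\to L^{\overline p\,{}'}$ with norm $\|W\|_{L^n}$, and composing gives the claim. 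For \eqref{l.1'}: by \eqref{2.2} with $r=s'$ (combined with the spectral theorem to replace $\la^2-i\e(\la)\la$ by $\la^2-i\mu\e(\la)\la$ for $|\mu|\ge\delta_0$, absorbing $\mu$-dependence into the constant) we have $R_\la\colon L^2\to L^{s'}$ with norm $\le C\,\delta(\la,s')\la^{\sigma(s')-1}\e(\la)^{-1}$; again Hölder with $W\in L^n$ maps $L^{s'}\to L^{\overline s\,{}'}$ since $1/\overline s\,{}'=1/s'+1/n$, giving the stated bound. For \eqref{l.2}: by \eqref{2.2} with $r=q$ we get $R_\la\colon L^2\to L^q$ with the corresponding norm; multiplication by $W\in L^\infty$ maps $L^q\to L^q$ boundedly — but wait, I want $L^{q'}\to L^2$, so I instead take adjoints once more or equivalently note $\|WR_\la\|_{L^{q'}\to L^2}=\|R_\la^* W^*\|_{L^2\to L^q}\le\|R_\la\|_{L^2\to L^q}\|W\|_{L^\infty}$ using that $R_\la^*$ has the same $L^2\to L^q$ norm (conjugate spectral parameter) and $W$ is real-valued so $W^*=W$ acts boundedly on $L^2$ with norm $\|W\|_\infty$, but to land in $L^q$ we compose as $L^2\xrightarrow{W}L^2\xrightarrow{R_\la^*}L^q$. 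This gives \eqref{l.2}.

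**Main obstacle.** The genuine content is converting the resolvent estimates \eqref{2.2} and \eqref{2.6}, which are phrased as \emph{quasimode} estimates $\|u\|_r\le\cdots\|(-\Delta_g-\zeta)u\|$, into \emph{resolvent bounds} $\|(-\Delta_g-\zeta)^{-1}\|_{\cdots}\le\cdots$. This requires knowing that $-\Delta_g-\la^2+i\mu\e(\la)\la$ is invertible on the relevant domain and that its inverse carries the stated mapping property — this follows from the spectral theorem since $\Im\zeta=\mu\e(\la)\la\ne0$ puts $\zeta$ off the spectrum, but one must check the substitution $i\e(\la)\la\rightsquigarrow i\mu\e(\la)\la$ for general $|\mu|\ge\delta_0$ is harmless (it is: it only improves the distance to the spectrum, and the functional calculus bound is monotone in $|\Im\zeta|$ in the regime $|\Im\zeta|\lesssim\la$, so the constant can be taken uniform in such $\mu$). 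The Hölder step is entirely routine; the bookkeeping of which exponent pairs are admissible (using the self-duality of \eqref{2.1} under $(p,q)\mapsto(q',p')$) is the only other place to be careful.
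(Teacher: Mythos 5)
Your proof is correct and follows essentially the same route as the paper: reduce by duality to a bound on $W\,(-\Delta_g-\la^2-i\mu\e(\la)\la)^{-1}$, use the (dual of the) hypotheses \eqref{2.6} or \eqref{2.2} to control the resolvent factor, and absorb $W$ by H\"older's inequality with the exponent identity $1/(\overline{p})'=1/p'+1/n$ (resp.\ $1/(\overline{s})'=1/s'+1/n$). Your remark about passing from $i\e(\la)\la$ to $i\mu\e(\la)\la$ via the spectral theorem is a fair observation of a small point the paper also relies on (cf.\ the argument surrounding \eqref{spectral1}).
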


\begin{proof}
Notice that, since we are assuming $n\ge3$, the operators in \eqref{l.1}--\eqref{l.2} are bounded on $L^2(M)$ by duality,
H\"older's inequality and Sobolev estimates.

Also, by duality, \eqref{l.1} is a consequence of the following
\begin{equation}\label{l.3}
\bigl\| W(-\Delta_g-\la^2-i\mu\e(\la)\la)^{-1}h\bigr\|_{L^{(\overline{p})'}(M)}
\le C_{\delta_0}\|W\|_{L^n(M)}\|h\|_{L^{q'}(M)}.
\end{equation}
To prove this we first observe that
$$\frac1{(\overline{p})'}=1-\frac1{\overline{p}}=1-\frac1p+\frac1n=\frac1{p'}+\frac1n.$$
Thus, by H\"older's inequality and the dual version of \eqref{2.6} we have
\begin{align*}
\bigl\| W(-\Delta_g-&\la^2-i\mu\e(\la)\la)^{-1}h\bigr\|_{L^{(\overline{p})'}(M)} 
\\
&\le \|W\|_{L^n(M)}
\| (-\Delta_g-\la^2-i\mu\e(\la)\la)^{-1}h\|_{L^{p'}(M)}
\\
&\le C_{\delta_0}\|W\|_{L^n(M)}\|h\|_{L^{q'}(M)},
\end{align*}
as desired.

This argument also yields \eqref{l.1'}.  One obtains the dual version of \eqref{l.1'} by applying \eqref{2.2} and H\"older's inequality.

Similarly \eqref{l.2} is equivalent to
\begin{align*}
\|W(-\Delta_g-&\la^2-i\mu\e(\la)\la)^{-1}h\|_{L^2(M)}
\\
&\le C\|W\|_{L^\infty(M)} \delta(\la,q) \, \la^{\sigma(q)-1} \, \bigl(\e(\la)\bigr)^{-1} \, \|h\|_{L^{q^\prime}(M)}.
\end{align*}
This follows immediately from the dual version of \eqref{2.2}.
\end{proof}

\begin{proof}[Proof of Theorem~\ref{thm2.1}]

Let us first note that proving \eqref{2.7} is equivalent
to showing that
$$\bigl\| (H_V-\la^2+i\mu\e(\la)\la)^{-1}\bigr\|_{L^p\to
L^q}\lesssim 1, \quad \text{if } \, \,
\la \ge \Lambda \, \, \text{and } \, \, |\mu|\ge1,$$
with $\Lambda$ sufficiently large and $(p,q)$ as
in \eqref{2.1}.  By duality, it suffices prove this inequality when 
\begin{equation}\label{2.10}
\tfrac{2n}{n-1}<q\le \tfrac{2n}{n-2}.
\end{equation}
Thus, our task is to show that
\begin{equation}\label{2.11}
\bigl\| (H_V-\la^2+i\mu\e(\la)\la)^{-1}f\bigr\|_{L^q(M)}
\le C\|f\|_{L^p(M)} \quad \text{if } \, \,
\la \ge \Lambda \, \, \text{and } \, \, |\mu|\ge1,
\end{equation}
with $(p,q)$ satisfying \eqref{2.1} and \eqref{2.10}.
As in Theorem~\ref{thm2.1} we are also assuming that \eqref{2.2} and \eqref{2.6} are valid for this pair of exponents.

We are assuming \eqref{2.10} since by \eqref{s51}
in the appendix we have
$$u\in L^q(M), \quad 2\le q\le \tfrac{2n}{n-2}
\, \, \text{if } \, \, \,
(H_V-\la^2+i\mu\e(\la)\la)u\in L^2.$$
Thus for $q$ as in \eqref{2.10}
\begin{equation}\label{2.12}
\bigl\| (H_V-\la^2+i\mu\e(\la)\la)^{-1}f\bigr\|_{L^q(M)}
<\infty \quad \text{if } \, \, f\in L^2(M).
\end{equation}
In proving \eqref{2.11} since $L^2$ is dense in $L^p$
we may and shall assume that $f\in L^2(M)$ to be
able to use \eqref{2.12} to justify a bootstrapping
argument that follows.

The bootstrapping argument shall also exploit the
simple fact that if we let
\begin{equation}\label{2.13}
V_{\le N}(x)=
\begin{cases}V(x), \, \, \, \text{if } \, \,
|V(x)|\le N,
\\
0, \, \, \, \text{otherwise},
\end{cases}
\end{equation}
then, of course,
\begin{equation}\label{2.14}
\|V_{\le N}\|_{L^\infty}\le N,
\end{equation}
and, if $V_{>N}(x)=V(x)-V_{\le N}(x)$,
\begin{equation}\label{2.15}
\|V_{>N}\|_{L^{n/2}(M)}\le \delta(N), 
\quad \text{with } \, \, \delta(N) \searrow
0, \, \, \, \text{as } \, \, N\to \infty,
\end{equation}
since we are assuming that $V\in L^{n/2}(M)$.

To exploit this we use the second resolvent formula
\eqref{1.4} to write
\begin{align}\label{2.16}
&(H_V-\la^2+i\mu \e(\la)\la)^{-1}f
\\
&= (-\Delta_g-\la^2+i\mu \e(\la)\la)^{-1}f
\notag
\\
&-\bigl[ \,  |V_{>N_1}|^{\frac12}] \, (-\Delta_g-\la^2-i\mu\e(\la)\la)^{-1} \, \bigr]^* \, \bigl(
\bigl(V^{1/2}\cdot (H_V-\la^2+i\mu \e(\la)\la)^{-1}f \bigr)\bigr)
\notag
\\
&-\bigl[ \,  |V_{\le N_1}|^{\frac12}] \, (-\Delta_g-\la^2-i\mu\e(\la)\la)^{-1} \, \bigr]^* \, \bigl(
\bigl((V_{>N_2})^{\frac12}\cdot (H_V-\la^2+i\mu \e(\la)\la)^{-1}f \bigr)\bigr)
\notag
\\
&-\bigl[ \,  |V_{\le N_1}|^{\frac12}] \, (-\Delta_g-\la^2-i\mu\e(\la)\la)^{-1} \, \bigr]^* \, \bigl(
\bigl((V_{\le N_2})^{\frac12}\cdot (H_V-\la^2+i\mu \e(\la)\la)^{-1}f \bigr)\bigr)
\notag
\\
&=I-II-III-IV. \notag
\end{align}
Here and for the remainder of the proof of 
Theorem~\ref{thm2.1} we are
assuming that
$$|\mu|\ge1.$$
We shall not appeal to our assumption that $\la$ is
large until the end of the proof.

By the uniform Sobolev estimates \eqref{2.6} for the 
unperturbed operator we have
\begin{equation}\label{2.17}
\|I\|_q\le C\|f\|_p.
\end{equation}
Also, by \eqref{l.1} and H\"older's inequality
\begin{equation}\nonumber
\begin{aligned}
\|II\|_q &\le C \| \, |V_{>N_1}|^{1/2} \|_{L^{n}}
\bigl\|V^{\frac12}\cdot
(H_V-\la^2+i\mu \e(\la)\la)^{-1}f
\bigr\|_{\overline{p}} \\ &\le 
C\|V_{>N_1}\|^{\frac12}_{L^{n/2}}\cdot \|V\|^{\frac12}_{L^{n/2}}
\cdot
\bigl\| (H_V-\la^2+i\mu \e(\la)\la)^{-1}f\bigr\|_{L^q},
\end{aligned}
\end{equation} 
since $\frac {1}{\overline{p}}=\frac1p-\frac1n$.
 By \eqref{2.15} we can
fix $N_1$ large enough so that
$C\|V_{>N_1}\|^{\frac12}_{L^{n/2}}\cdot \|V\|^{\frac12}_{L^{n/2}}<1/6$, yielding the bounds
\begin{equation}\label{2.18}
\|II\|_q<\frac16 \, \bigl\|
(H_V-\la^2+i\mu \e(\la)\la)^{-1}f
\bigr\|_q.
\end{equation}
Similarly, 
\begin{equation}\nonumber
\begin{aligned}
\|III\|_q &\le C \|V_{\le N_1}\|^{\frac12}_{L^{n/2}}
\bigl\|(V_{>N_2})^{\frac12}\cdot
(H_V-\la^2+i\mu \e(\la)\la)^{-1}f
\bigr\|_{\overline{p}} \\ &\le 
C\|V\|^{\frac12}_{L^{n/2}}\cdot \|V_{>N_2}\|^{\frac12}_{L^{n/2}}
\cdot
\bigl\| (H_V-\la^2+i\mu \e(\la)\la)^{-1}f\bigr\|_{L^q},
\end{aligned}
\end{equation}
by \eqref{2.15} we can
fix $N_2$ large enough so that
$C\|V\|^{\frac12}_{L^{n/2}}\cdot \|V_{>N_2}\|^{\frac12}_{L^{n/2}}<1/6$, which implies
\begin{equation}\label{2.18'}
\|III\|_q<\frac16 \, \bigl\|
(H_V-\la^2+i\mu \e(\la)\la)^{-1}f
\bigr\|_q.
\end{equation}

It remains to estimate the norm of $IV$ in \eqref{2.16}.
We first note that by 
\eqref{l.2}
\begin{multline}\label{2.19}
\|IV \|_q\le CN^{\frac12}_1\delta(\la,q)\la^{\sigma(q)-1}
(\e(\la))^{-1}
\bigl\| (V_{\le N_2})^{\frac12}\cdot (H_V-\la^2+i\mu \e(\la)\la)^{-1}f
\bigr\|_2
\\
\le CN^{\frac12}_1N^{\frac12}_2 \delta(\la,q)\la^{\sigma(q)-1}
(\e(\la))^{-1}
\|  (H_V-\la^2+i\mu \e(\la)\la)^{-1}f
\|_2.
\end{multline}

We can estimate the last factor by appealing to the second
resolvent formula one more time.  Here there is no need
to split the potential, and, instead, we write
\begin{multline}\label{2.20}
(H_V-\la^2+i\mu \e(\la)\la)^{-1}f=
(-\Delta_g-\la^2+i\mu\e(\la)\la)^{-1}f
\\
-\bigl[ \, |V|^{\frac12} (-\Delta_g-\la^2-i\mu\e(\la)\la)^{-1} \, \bigr]^*
\bigl(\bigl(V^{\frac12}\cdot (H_V-\la^2+i\mu \e(\la)\la)^{-1}f\bigr)\bigr)
=A-B.
\end{multline}
By the dual version of \eqref{2.2} with $r=p'$, we have
\begin{equation}\label{2.21}
\|A\|_2\le C\delta(\la,p') \la^{\sigma(p')-1}
(\e(\la))^{-1}\|f\|_p.
\end{equation}
Also, if $\frac {1}{\overline{p}}=\frac1p-\frac1n$, then
 by  \eqref{l.1'} and H\"older's inequality,
\begin{multline}\label{2.22}
\|B\|_2\le C\delta(\la,p') \la^{\sigma(p')-1}
(\e(\la))^{-1}
\|V\|^{\frac12}_{L^{n/2}}
\|V^{\frac12} (H_V-\la^2+i\mu \e(\la)\la)^{-1}f\bigr\|_{\overline{p}} \\
\le C\delta(\la,p') \la^{\sigma(p')-1}
(\e(\la))^{-1}
\|V\|_{L^{n/2}}
\| (H_V-\la^2+i\mu \e(\la)\la)^{-1}f\bigr\|_q.
\end{multline}

If we combine \eqref{2.20}, \eqref{2.21} and 
\eqref{2.22} and use \eqref{2.19} we
conclude that
\begin{align}\label{2.23}
\|IV \|_q &\le CN^{\frac12}_1N^{\frac12}_2 \la^{\sigma(q)+\sigma(p')-2}
\, \bigl(\e(\la)\bigr)^{-2}\, 
\delta(\la,q)\, \delta(\la,p')
\\
&\qquad\qquad \qquad
\times \bigl(\|f\|_p +\|V\|_{L^{n/2}}
\| (H_V-\la^2+i\mu \e(\la)\la)^{-1}f\bigr\|_q\bigr)
\notag
\\
&\le C\|f\|_p + \frac16 \| (H_V-\la^2+i\mu \e(\la)\la)^{-1}f\bigr\|_q,
\notag
\end{align}
by \eqref{2.5} if $\la\ge \Lambda$, with $\Lambda$
sufficiently large, 
since $N_1$ and $N_2$ have been
fixed.

If we combine \eqref{2.17}, \eqref{2.18}, \eqref{2.18'} and \eqref{2.23}, we conclude that for $\la \ge \Lambda$
we have
$$\|(H_V-\la^2+i\mu \e(\la)\la)^{-1}f\|_{L^q(M)}
\le C\|f\|_{L^p(M)}
+\frac12 \, \| (H_V-\la^2+i\mu \e(\la)\la)^{-1}f\bigr\|_{L^q(M)}.$$
By \eqref{2.12}, this leads to \eqref{2.11} since
we are assuming, as we may, that $f\in L^2(M)$.
\end{proof}

\noindent {\bf Remark:}
In dimensions $n\ge 5$ the arguments can be simplified a little bit, since, in these cases, we may appeal to the more
straightforward second resolvent formula \eqref{1.4'} instead of relying on \eqref{1.4} (as we must do for
$n=3, \, 4$).  If we do so for $n\ge5$, then we may replace \eqref{2.16} with a simpler variant
\begin{align*}
\bigl(H_V-\la^2+i\mu \e(\la)\bigr)^{-1} f &= \bigl(-\Delta_g-\la^2+i\mu \e(\la)\bigr)^{-1} f
\\
&\quad- \bigl[ \bigl(-\Delta_g-\la^2+i\mu \e(\la)\bigr)^{-1}\bigr] \, \bigl(V_{>N}\cdot (H_V-\la^2+i\mu \e(\la))f\bigr)
\\
&\quad-\bigl[ \bigl(-\Delta_g-\la^2+i\mu \e(\la)\bigr)^{-1}\bigr] \, \bigl(V_{\le N}\cdot (H_V-\la^2+i\mu \e(\la))f\bigr).
\end{align*}
Then the arguments that were used to control $II$ and $III$ in \eqref{2.16} can easily be adapted to control the second and
third terms, respectively, in the right side of the above identity.  As we alluded to earlier, we need to use the more
complicated  second resolvent formula \eqref{1.4} when $n=3,4$ due to the fact that
the {\em form} domains (but not {\em operator} domains) of $H_V$ and $H_0$ coincide in this case, while for
$n\ge5$ we may use \eqref{1.5} since, in these cases, the operator domains coincide.\footnote{We are grateful 
to one of the referees for pointing this out to us.}
\bigskip

\begin{proof}[Proof of Corollary~\ref{corr2.2}]
Let us first prove the quasimode estimates \eqref{2.8}.
To be able to use the uniform Sobolev estimates
in Theorem~\ref{thm2.1} we shall initially assume
that $\la\ge \Lambda$, where $\Lambda=\Lambda(M,q,V)\ge
1$ is as in this theorem.

Proving the quasimode estimate is equivalent to showing that for
$q$ as in \eqref{2.8} we have
$$\bigl\|(H_V-\la^2+i\e(\la)\la)^{-1}
\bigr\|_{L^2\to L^q}\le C\delta(\la,q)
\la^{\sigma(q)-1}(\e(\la))^{-1}, \quad 
\la \ge \Lambda,
$$
or, by duality,
\begin{equation}\label{2.24}
\bigl\|(H_V-\la^2+i\e(\la)\la)^{-1}f\bigr\|_{L^2(M)}
\le C\delta(\la,q)\la^{\sigma(q)-1}(\e(\la))^{-1}
\|f\|_{L^{q'}(M)}, \quad \la \ge \Lambda.
\end{equation}

To prove this we note that \eqref{2.2} and duality
yield
\begin{equation}\label{2.25}
\bigl\|(-\Delta_g-\la^2+i\e(\la)\la)^{-1}\bigr\|_{L^{q'}\to L^2} 
\le C\delta(\la,q)\la^{\sigma(q)-1}(\e(\la))^{-1},
\quad \la\ge 1,
\end{equation}
while, \eqref{2.7} yields
\begin{equation}\label{2.26}
\bigl\| (H_V-\la^2+i\e(\la)\la)^{-1}
\bigr\|_{L^{q'}\to L^{r'}}\le C, \quad
\la \ge \Lambda,
\end{equation}
since, as remarked after \eqref{2.1}, if $(p,q)$
is as in \eqref{2.1} then so is $(q',p')$.

If we use  the decomposition \eqref{2.20} again with $\mu=1$, then by 
\eqref{2.25} we can estimate the first term in the
right side of this equality as follows:
\begin{equation}\label{2.27}
\|A\|_2 \le C\delta(\la,q)\la^{\sigma(q)-1}(\e(\la))^{-1}
\|f\|_{L^{q'}(M)}.
\end{equation}
Since $\frac1{q'}-\frac1{p'}=\frac2n$, by 
\eqref{l.1'}
and H\"older's inequality we also obtain
\begin{multline*}
\|B\|_2\le C\delta(\la,q)\la^{\sigma(q)-1}(\e(\la))^{-1} \|V\|^{\frac12}_{L^{n/2}} 
\bigl\|V^{\frac12} \cdot (H_V-\la^2+i\e(\la)\la)^{-1}f\bigr\|_{\overline{q}^\prime}
\\
\le C\|V\|_{L^{n/2}} \delta(\la,q)\la^{\sigma(q)-1}
(\e(\la))^{-1} \| H_V-\la^2+i\e(\la)\la)^{-1}f\|_{p'},
\end{multline*}
if the pair $(q',p')$ is as in 
\eqref{2.1} and $\frac{1}{\overline{q}^\prime}=\frac{1}{q^\prime}-\frac1n$.

By \eqref{2.7}
$$\| (H_V-\la^2+i\e(\la)\la)^{-1}f\|_{p'} \le
C_{p',V}\|f\|_{q'}, \quad \la\ge \Lambda,
$$
and since $V\in L^{n/2}$ we conclude that $\|B\|_2$ is also dominated
by the right side of \eqref{2.24} for $\la$ as above.

To obtain the quasimode estimate \eqref{2.9} in the Corollary we need to see that the bounds
in \eqref{2.24} are also valid when $1\le \la<\Lambda$,
with $\Lambda=\Lambda(M,q,V)\ge1$ being the fixed constant in Theorem~\ref{thm2.1}.  This just follows
from the fact that $\delta(\la,q)$ and $\e(\la)$ are
assumed to be nonzero and continuous, and also
by the spectral theorem
\begin{multline}\label{spectral}
\bigl\|(H_V-\la^2+i\e(\la)\la)^{-1}f\bigr\|_{L^2(M)}
\le C
\bigl\|(H_V-\la^2+i\e(\Lambda)\Lambda)^{-1}f\bigr\|_{L^2(M)}, \\
 \text{if } \, \, 1\le \la\le \Lambda.
\end{multline}

Let us finish the proof of the Corollary by proving
\eqref{2.9}, which is equivalent to showing that
for $(p,q)$ as in \eqref{2.1} we have
\begin{equation}\label{2.28}
\|(H_V-\la^2+i\mu \e(\la)\la)^{-1}f\|_q
\le C_{\delta,V,q}
\|f\|_p, \quad
\text{if } \, \, \la \ge 1 \, \, 
\text{and } \, \, |\mu|\ge \delta.
\end{equation}
As before, we may assume that $q\in (\tfrac{2n}{n-1},
\tfrac{2n}{n-2}]$ to justify the bootstrap argument.

Since, similar to \eqref{spectral}, by the spectral theorem, we have
\begin{multline}\label{spectral1}
\bigl\|(H_V-\la^2+i\mu\e(\la)\la)^{-1}f\bigr\|_{L^2(M)}
\le C_{\delta_0}
\bigl\|(H_V-\la^2+i\e(\lambda)\lambda)^{-1}f\bigr\|_{L^2(M)}, \\
\text{if } \, \, |\mu|\ge \delta_0 \, \, \,
\text{and } \, \, \la\ge 1.
\end{multline}
Thus, by \eqref{2.8} and duality
\begin{multline}\label{2.29}
\bigl\|(H_V-\la^2+i\mu\e(\la)\la)^{-1}
\bigr\|_{L^p\to L^2}
\le C_{\delta_0}\delta(\la,p')\la^{\sigma(p')-1}
(\e(\la))^{-1}, 
\\
\text{if } \, \, |\mu|\ge \delta_0 \, \, \,
\text{and } \, \, \la\ge 1,
\end{multline}
while by \eqref{2.2} we have
\begin{multline}\label{temp}
\bigl\|(-\Delta_g-\la^2+i\mu\e(\la)\la)^{-1}
\bigr\|_{L^2\to L^q}\le C_{\delta_0}\delta(\la,q)
\la^{\sigma(q)-1}(\e(\la))^{-1}
\\ \text{if } \, \, |\mu|\ge \delta_0 \, \, \,
\text{and } \, \, \la\ge 1,
\end{multline}
Also, by \eqref{2.6}
\begin{equation}\label{2.30}
\bigl\| (-\Delta_g-\la^2+i\mu \e(\la)\la)^{-1}
\bigr\|_{L^p\to L^q}\le C_{\delta_0}, 
\quad \text{if } \, \, |\mu|\ge \delta_0 \, \, \,
\text{and } \, \, \la\ge 1.
\end{equation}

If we then split as in \eqref{2.16} and argue
as before, we find that \eqref{2.30} yields
\begin{equation}\label{2.31}
\|I\|_q\le C_{\delta_0}\|f\|_p,
\end{equation}
and
\begin{equation}\label{2.32}
\|II\|_q+\|III\|_q\le \frac12 \bigl\|
(H_V-\la^2+i\mu\e(\la)\la)^{-1}f\bigr\|_{L^q},
\end{equation}
if for the latter $N_1,\,N_2$ are fixed large
enough and $|\mu|\ge \delta_0$ and $\la\ge 1$.

If we use
\eqref{l.2}
and an earlier argument we obtain
\begin{align*}
\|IV\|_q&\le C_{\delta_0}N^{\frac12}_1N^{\frac12}_2\delta(\la,q)\la^{\sigma(q)-1}
(\e(\la))^{-1}
\bigl\|(H_V-\la^2+i\mu\e(\la)\la)^{-1}f\bigr\|_2
\\
&\le C'_{\delta_0}N^{\frac12}_1N^{\frac12}_2\delta(\la,q)\la^{\sigma(q)-1}
\delta(\la,p')\la^{\sigma(p')-1}
(\e(\la))^{-2}\|f\|_p,
\end{align*}
and since we are assuming \eqref{2.5}
this yields
\begin{equation}\label{2.34}
\|IV\|_q\le C_{\delta_0}\|f\|_p.
\end{equation}

Since \eqref{2.30}, \eqref{2.32} and \eqref{2.34} yield
\eqref{2.28} the proof is complete.
\end{proof}

Now we show another abstract theorem that gives us quasimode estimates for larger exponents.
\begin{theorem}\label{thm2.3}  Assume  $(M,g)$ is a compact Riemannian
manifold of dimension $n\ge5$.  Assume further that \eqref{2.8} holds for some $\tfrac{2(n+1)}{n-1}\leq r< \tfrac{2n}{n-4}$, with $\e(\la)$, $\delta(\la,r)$
 satisfying \eqref{2.3} and \eqref{2.4} respectively.
Then if $V\in L^{n/2}(M)$ we have for $u\in \Dom(H_V)$
\begin{multline}\label{2.60}
\|u\|_{q}\le C_{V,r} \,  \delta(\la,r) \,
\la^{\sigma(q)-1} \, \bigl(\e(\la)\bigr)^{-1}\bigl\|(-\Delta_g+V-\la^2+i \e(\la)\la)u\bigr\|_2, \\
 \, \, \, \text{if } \, \, \,\,
\la \ge 1\,\,\,\, r< q\leq \tfrac{2n}{n-4}.
\end{multline}
Similarly, for n=3 or n=4, assume that \eqref{2.8} holds for some $\tfrac{2(n+1)}{n-1}\leq r< \infty$, with $\e(\la)$, $\delta(\la,r)$
 satisfying \eqref{2.3} and \eqref{2.4}, we have
\begin{multline}\label{2.61}
\|u\|_{q}\le C_{V,r} \,  \delta(\la,r) \,
\la^{\sigma(q)-1} \, \bigl(\e(\la)\bigr)^{-1}\bigl\|(-\Delta_g+V-\la^2+i \e(\la)\la)u\bigr\|_2, \\
 \, \, \, \text{if } \, \, 
\la \ge 1,\,\,\, r< q<\infty.
\end{multline}
\end{theorem}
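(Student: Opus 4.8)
The plan is to follow the scheme of the proofs of Theorem~\ref{thm2.1} and Corollary~\ref{corr2.2}, inserting a Bernstein (frequency‑localization) step which is responsible for the gain in Lebesgue exponent from $r$ to $q$. Put $\zeta=\la^2-i\e(\la)\la$. As before it suffices to bound $\|(H_V-\zeta)^{-1}\|_{L^2\to L^q}$, and I would first prove, for $\la\ge\Lambda$ with $\Lambda=\Lambda(M,q,V)$ large,
\[
\bigl\|(H_V-\zeta)^{-1}\bigr\|_{L^2(M)\to L^q(M)}\le C\,\delta(\la,r)\,\la^{\sigma(q)-1}\,\bigl(\e(\la)\bigr)^{-1},
\]
and then pass to $1\le\la\le\Lambda$ via the spectral theorem together with a single use of the resolvent formula, exactly as in the proof of Corollary~\ref{corr2.2}. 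Two elementary facts drive the argument: (a) for a fixed $\beta\in C_0^\infty((0,\infty))$ with $\beta\equiv1$ on $[\tfrac12,2]$ one has the Bernstein bound $\|\beta(\sqrt{-\Delta_g}/\la)\|_{L^a(M)\to L^b(M)}\lesssim\la^{n(\frac1a-\frac1b)}$ for $1\le a\le b\le\infty$, uniformly in $\la\ge1$; and (b) since $r,q\ge\tfrac{2(n+1)}{n-1}$ we have $\sigma(q)-\sigma(r)=n(\tfrac1r-\tfrac1q)$, which is precisely why $\sigma(q)$ — and not $\sigma(r)$ — appears on the right of \eqref{2.60}.

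Split $(H_V-\zeta)^{-1}=\beta(\sqrt{-\Delta_g}/\la)(H_V-\zeta)^{-1}+\bigl(1-\beta(\sqrt{-\Delta_g}/\la)\bigr)(H_V-\zeta)^{-1}$. For the first term, (a) together with the hypothesis \eqref{2.8} at the exponent $r$ gives
\[
\bigl\|\beta(\sqrt{-\Delta_g}/\la)(H_V-\zeta)^{-1}\bigr\|_{L^2\to L^q}\lesssim \la^{n(\frac1r-\frac1q)}\bigl\|(H_V-\zeta)^{-1}\bigr\|_{L^2\to L^r}\lesssim \la^{n(\frac1r-\frac1q)}\,\delta(\la,r)\,\la^{\sigma(r)-1}\bigl(\e(\la)\bigr)^{-1},
\]
which is exactly the claimed bound by (b). For the second term I would use the operator identity $(-\Delta_g-\zeta)(H_V-\zeta)^{-1}=I-V(H_V-\zeta)^{-1}$ (valid for $n\ge5$; when $n=3,4$ one instead uses the form \eqref{1.4} of the second resolvent formula, commuting $1-\beta(\sqrt{-\Delta_g}/\la)$ through the adjoint so that each factor is a bona fide bounded operator — the resulting estimates are the same), to write
\[
\bigl(1-\beta(\sqrt{-\Delta_g}/\la)\bigr)(H_V-\zeta)^{-1}=\bigl(1-\beta(\sqrt{-\Delta_g}/\la)\bigr)(-\Delta_g-\zeta)^{-1}\bigl(I-V(H_V-\zeta)^{-1}\bigr).
\]
On $\supp(1-\beta(\cdot/\la))$ one has $|\tau^2-\zeta|\gtrsim\tau^2+\la^2$, so $\bigl(1-\beta(\sqrt{-\Delta_g}/\la)\bigr)(-\Delta_g-\zeta)^{-1}$ is a negative‑order elliptic operator behaving like $(-\Delta_g+\la^2)^{-1}$; hence it maps $L^2\to L^q$ with norm $\lesssim\la^{n(\frac12-\frac1q)-2}$ in the relevant range of exponents (the borderline $q=\tfrac{2n}{n-4}$ needing instead the sharp Sobolev embedding $H^2(M)\hookrightarrow L^{\frac{2n}{n-4}}(M)$), and it maps $L^{\overline q}\to L^q$, with $\tfrac1{\overline q}=\tfrac1q+\tfrac2n$, with norm $\lesssim1$.

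Feeding $f\in L^2$ into the first factor and using \eqref{2.3}–\eqref{2.4} (which give $\delta(\la,r)(\e(\la))^{-1}\ge(\e(\la))^{-1/2}\ge1$ and $\e(\la)\ge1/\la$), one checks $\la^{n(\frac12-\frac1q)-2}\lesssim\delta(\la,r)\,\la^{\sigma(q)-1}(\e(\la))^{-1}$, so $\bigl(1-\beta(\sqrt{-\Delta_g}/\la)\bigr)(-\Delta_g-\zeta)^{-1}f$ is acceptable. For $\bigl(1-\beta(\sqrt{-\Delta_g}/\la)\bigr)(-\Delta_g-\zeta)^{-1}V(H_V-\zeta)^{-1}$ I would split $V=V_{\le N}+V_{>N}$ as in \eqref{2.13} (for $n=3,4$, splitting the potential in the two factors of \eqref{1.4} as in \eqref{2.16}): using $\|V_{\le N}\|_{L^\infty}\le N$ and $\|(H_V-\zeta)^{-1}\|_{L^2\to L^2}\le(\e(\la)\la)^{-1}$, the $V_{\le N}$ piece is $\lesssim N\la^{n(\frac12-\frac1q)-2}(\e(\la)\la)^{-1}$ in $L^2\to L^q$, which by the same elementary inequalities is $\le\tfrac16\,\delta(\la,r)\la^{\sigma(q)-1}(\e(\la))^{-1}$ once $\la\ge\Lambda$; for the $V_{>N}$ piece, H\"older sends $L^q\to L^{\overline q}$ at cost $\|V_{>N}\|_{L^{n/2}}$ and then $\bigl(1-\beta(\sqrt{-\Delta_g}/\la)\bigr)(-\Delta_g-\zeta)^{-1}$ sends $L^{\overline q}\to L^q$ with norm $\lesssim1$, so by \eqref{2.15} a suitable choice of $N$ makes this piece $\le\tfrac16\|(H_V-\zeta)^{-1}\|_{L^2\to L^q}$, absorbed into the left side. (When $n\ge5$ this left‑hand norm is a priori finite since $\Dom(H_V)=\Dom(-\Delta_g)\hookrightarrow L^{\frac{2n}{n-4}}$; when $n=3,4$, where a priori $\Dom(H_V)\hookrightarrow L^{\frac{2n}{n-2}}$ only, I would first run the whole argument with $V$ replaced by the bounded truncation $V_{\le M}$, for which the estimate holds with a constant independent of $M$ since it depends on $V$ only through $\|V\|_{L^{n/2}}$ and the tails $\|V_{>N}\|_{L^{n/2}}$, and then let $M\to\infty$.) Collecting the estimates yields the claim for $\la\ge\Lambda$.

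I expect the main obstacle to be bookkeeping rather than analysis: the gain in integrability from $r$ to $q$ must be extracted \emph{only} from the Bernstein estimate on the frequency‑$\la$ block $\beta(\sqrt{-\Delta_g}/\la)(H_V-\zeta)^{-1}$, because the complementary elliptic operator $\bigl(1-\beta(\sqrt{-\Delta_g}/\la)\bigr)(-\Delta_g-\zeta)^{-1}$ gains only two derivatives and so cannot improve integrability from $L^r$ up to $L^q$; hence every occurrence of $V(H_V-\zeta)^{-1}$ composed with it must be routed through $L^2$ or through $L^q$, never through $L^r$. A secondary point is that the $L^2\to L^q$ bound for $\bigl(1-\beta(\sqrt{-\Delta_g}/\la)\bigr)(-\Delta_g-\zeta)^{-1}$ is borderline at the endpoint $q=\tfrac{2n}{n-4}$, and that, for $n=3,4$, the truncation above is genuinely needed to make the absorption step legitimate.
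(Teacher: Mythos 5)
Your proposal follows the same route as the paper's proof: a frequency decomposition with $\beta(\sqrt{-\Delta_g}/\la)$, with the Bernstein estimate on the frequency-$\la$ block furnishing the gain from $r$ to $q$ via the identity $\sigma(q)-\sigma(r)=n(\tfrac1r-\tfrac1q)$, and the complementary block treated through the second resolvent formula together with an elliptic symbol bound on $(1-\beta(\sqrt{-\Delta_g}/\la))(-\Delta_g-\zeta)^{-1}$, a split $V=V_{\le N}+V_{>N}$, and an absorption argument. That structure is identical to the paper's (cf.\ \eqref{2.64}--\eqref{2.72}). Two minor deviations are worth flagging. First, for the $V_{\le N}$ piece the paper routes through $L^q$: it uses the one-derivative gain $\|(1-\beta(P/\la))(-\Delta_g-\zeta)^{-1}\|_{L^{\overline p}\to L^q}\lesssim\la^{-1}$ and absorbs into $\|(H_V-\zeta)^{-1}f\|_q$ once $\la\ge\Lambda$ (see \eqref{2.83'}--\eqref{2.85'}), whereas you route through $L^2$ using $\|(H_V-\zeta)^{-1}\|_{L^2\to L^2}\le(\e(\la)\la)^{-1}$; both work and give a comparable absorption condition on $\la$. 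Second, and more substantively, for the a priori finiteness $\|(H_V-\zeta)^{-1}f\|_q<\infty$ needed to license the bootstrap: your observation that $\Dom(H_V)=\Dom(-\Delta_g)\hookrightarrow L^{2n/(n-4)}$ for $n\ge5$ is a clean shortcut and is valid by \eqref{s7}. For $n=3,4$ you propose a truncation $V\mapsto V_{\le M}$ and passage $M\to\infty$; this is plausible but, as stated, incomplete, because you would still need to show convergence of $(H_{V_{\le M}}-\zeta)^{-1}f$ to $(H_V-\zeta)^{-1}f$ in $L^q$ (or a.e.\ along a subsequence) to transfer the uniform bound. The paper avoids this by proving Lemma~\ref{Sobv} --- a separate bootstrap at a fixed large shift $N_0$ giving $\|u\|_q\lesssim\|(H_V+N_0)u\|_{p(q)}$ for all $\frac{n}{n-2}<q<\infty$ --- which is self-contained since $p(q)\le2$ in the relevant range so $u\in L^q$ follows directly from $f\in L^2$. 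Either route is fine, but the truncation argument would need that resolvent-convergence step filled in to be complete.
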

Here compared with the non-perturbed case \eqref{2.2}, we have $\delta(\la, r)$ on the right side of \eqref{2.60} and \eqref{2.61} instead of $\delta(\la, q)$
for larger exponents $q$. This is because we are using the bound \eqref{2.8} for the exponent $r$ in our proof. And as we can see in the first section, except for the case $q_c=\frac{2(n+1)}{n-1}$, for our applications we have $\delta(\la,q)\equiv \sqrt{\e(\la)}$ for all larger exponents in the quasimode estimates.

\begin{proof}[Proof of Theorem \ref{thm2.3}]
Throughout the proof we shall assume that 
\begin{equation}\label{2.61'}
\tfrac{2(n+1)}{n-1}\le r<q\leq \tfrac{2n}{n-4}, \,\,\,\text{if}\,\,\, n\ge 5,\,\,\,\text{or}\,\,\,\tfrac{2(n+1)}{n-1}\le r<q< \infty,\,\,\ \text{if}\,\,\, n=3, 4.
\end{equation}
Note that proving \eqref{2.60} is equivalent to showing that for $q$ satisfying \eqref{2.61'}
\begin{equation}\label{2.62}
\bigl\|(H_V-\la^2+i \e(\la)\la)^{-1}f\bigr\|_q \leq C_{V,r} \,  \delta(\la,r) \,
\la^{\sigma(q)-1} \, \bigl(\e(\la)\bigr)^{-1} \|f\|_2,
 \, \, \, \text{if } \, \, 
\la \ge 1.
\end{equation}
As before, in order to justify a bootstrapping argument that follows, we shall temporarily assume that for $q$ as in \eqref{2.61'}
\begin{equation}\label{2.63}
\bigl\| (H_V-\la^2+i\e(\la)\la)^{-1}f\bigr\|_{L^q(M)}
<\infty \quad \text{if } \, \, f\in L^2(M).
\end{equation}
We shall give the proof of \eqref{2.63} later in Lemma \ref{Sobv}  by obtaining Sobolev type inequalities for the operator $H_V$.

Fix a smooth bump function $\beta\in C_0^\infty(1/4,4)$ with $\beta\equiv 1$ in $(1/2,2)$, and let $P=\sqrt{\Delta_g}$, write
\begin{multline}\label{2.64}
(H_V-\la^2+i\e(\la)\la)^{-1}f
\\ \qquad \qquad\qquad= \beta(P/\la)(H_V-\la^2+i\e(\la)\la)^{-1}f+ \big(1-\beta(P/\la)\big)(H_V-\la^2+i\e(\la)\la)^{-1}f \\
=A+B. \qquad\qquad \qquad\qquad\qquad \qquad\qquad\qquad \qquad\qquad\qquad \qquad\qquad \quad
\end{multline}

To deal with the first term, note that since $\la^{-\alpha}\tau^\alpha \beta(\tau/\la)$ is a symbol of order 0, by Theorem 4.3.1 in \cite{SFIO2}, $\la^{-\alpha}(-\Delta_g)^{\frac{\alpha}{2}} \beta(P/\la)$ is a 0 order pseudo-differential operator, thus
\begin{equation}\label{2.65}
\|(-\Delta_g)^{\frac{\alpha}{2}} \beta(P/\la)\|_{L^r\rightarrow L^r} \lesssim \la^{\alpha},\,\,\,\,\, \text{if}\,\,\, 1<r<\infty.
\end{equation} So by Sobolev estimates, \eqref{2.65} and \eqref{2.8}, if $\alpha=n(\frac1r-\frac1q)$, we have
\begin{equation}\label{2.66}
\begin{aligned}
\|A\|_q &\leq \|(\Delta_g)^{\frac{\alpha}{2}}\beta(P/\la)(H_V-\la^2+i\e(\la)\la)^{-1}f\|_r  \\
&\le \la^{n(\frac1r-\frac1q)} \|(H_V-\la^2+i\e(\la)\la)^{-1}f\|_r \\
&\le  C_{V,r} \,  \delta(\la,r) \,
\la^{n(\frac1r-\frac1q)}\la^{\sigma(r)-1}\|f\|_2.  \\
\end{aligned}
\end{equation}
Since $n(\frac1r-\frac1q)+\sigma(r)=\sigma(q)$, the first term is dominated by the right side of \eqref{2.62}.

To bound the second term, we shall use the second resolvent formula \eqref{1.4} to write
\begin{equation}\label{2.66'}
\begin{aligned}
&\big(1-\beta(P/\la)\big)(H_V-\la^2+i \e(\la)\la)^{-1}f
\\
&= \big(1-\beta(P/\la)\big)(-\Delta_g-\la^2+i \e(\la)\la)^{-1}f
\notag
\\
&-\bigl(1-\beta(P/\la)\big)
\bigl[ |V_{>N}|^{1/2} (-\Delta_g-\la^2-i\e(\la)\la)^{-1}\bigr]^*
\bigl(V^{\frac12}\cdot (H_V-\la^2+i \e(\la)\la)^{-1}f\bigr)
\notag
\\
&-\big(1-\beta(P/\la)\big)
\bigl[ |V_{\le N}|^{1/2} (-\Delta_g-\la^2-i\e(\la)\la)^{-1}\bigr]^*
\bigl(V^{\frac12}\cdot (H_V-\la^2+i\e(\la)\la)^{-1}f\bigr)
\notag
\\
&=I-II-III. \notag
\end{aligned}
\end{equation}

Since the function $1-\beta(\tau/\la)$ vanishes in a dyadic neighborhood of $\la$, it is easy to see that 
$$\big(1-\beta(\tau/\la)\big)(\tau^2-\la^2+i \e(\la)\la)^{-1}(\tau^2+\la^2)
$$
is a symbol of order zero, again by Theorem 4.3.1 in \cite{SFIO2},
$$\big(1-\beta(P/\la)\big)(-\Delta_g-\la^2+i \e(\la)\la)^{-1}(-\Delta_g+\la^2)$$
is a 0 order pseudo-differential operator, thus
\begin{equation}\label{2.67}
\|\big(1-\beta(P/\la)\big)(-\Delta_g-\la^2+i \e(\la)\la)^{-1}f\|_r \lesssim \|(-\Delta_g+\la^2)^{-1}f\|_r,\,\,\,\,\, \text{if}\,\,\, 1<r<\infty.
\end{equation}
So by \eqref{2.67}, Sobolev estimates, the proof of \eqref{l.1} and the fact that 
\begin{multline}\bigl(1-\beta(P/\la)\big)
\bigl[ |V_{>N}|^{1/2} (-\Delta_g-\la^2-i\e(\la)\la)^{-1}\bigr]^*\\=\bigl[ |V_{>N}|^{1/2} \bigl(1-\beta(P/\la)\big)(-\Delta_g-\la^2-i\e(\la)\la)^{-1}\bigr]^*,
\end{multline}
we have for $q$ satisfying \eqref{2.61'}
\begin{equation}\label{2.68}
\begin{aligned}
\|II\|_q&\le C \|V_{>N}\|^{\frac12}_{L^{n/2}}
\bigl\|V^{\frac12}\cdot
(H_V-\la^2+i \e(\la)\la)^{-1}f
\bigr\|_{\overline{p}}  \\
&\le
C\|V_{>N}\|^{\frac12}_{L^{n/2}} \|V\|^{\frac12}_{L^{n/2}}
\cdot
\bigl\| (H_V-\la^2+i\e(\la)\la)^{-1}f\bigr\|_{L^q},
\end{aligned}
\end{equation}
where $\frac{1}{\overline{p}}-\frac1q=\frac1n$. By \eqref{2.15} we can
fix $N$ large enough so that
$C\|V_{>N}\|^{\frac12}_{L^{n/2}} \|V\|^{\frac12}_{L^{n/2}}<1/4$, yielding the bounds
\begin{equation}\label{2.69}
\|II\|_q\le \frac14 \, \bigl\|
(H_V -\la^2+i \e(\la)\la )^{-1}f
\bigr\|_q.
\end{equation}

To bound the third term, note that since
$( \frac{\la^2}{\tau^2+\la^2})^{\frac12}
$
is a symbol of order 0, by Theorem 4.3.1 in \cite{SFIO2},
$$(-\Delta_g/\la^2+1)^{-\frac12}$$
is a 0 order pseudo-differential operator, thus
if $\frac{1}{\overline{p}}=\frac1q-\frac1n$ then by Sobolev estimates
\begin{equation}\label{2.83'}
\|(-\Delta_g+\la^2)^{-1}f\|_q\le C\|(-\Delta_g+\la^2)^{-\frac12}f\|_{\overline{p}} \le C\la^{-1}\|f\|_{\overline{p}}.
\end{equation}
Thus, \eqref{2.67} and \eqref{2.83'} and our earlier arguments (i.e., the proof of Lemma~\ref{simple}) yield
\begin{equation}\label{2.84'}
\begin{aligned}
\|III\|_q\le& C\la^{-1}N^{1/2}\bigl\|V^{\frac12}\cdot
(H_V-\la^2+i \e(\la)\la)^{-1}f
\bigr\|_{\overline{p}} \\ \le& C\la^{-1}N^{1/2}\|V\|^{\frac12}_{L^{n/2}} \, \bigl\|
(H_V-\la^2+i \e(\la)\la)^{-1}f
\bigr\|_q.
\end{aligned}
\end{equation}
If we choose $\Lambda$ such that $C\Lambda^{-1}N^{1/2}\|V\|^{\frac12}_{L^{n/2}}=\frac14$, we conclude that
\begin{equation}\label{2.85'}
\|III\|_q\le \frac14 \, \bigl\|
(H_V-\la^2+i \e(\la)\la)^{-1}f
\bigr\|_q,\,\,\,\text{if}\,\,\,\la\ge\Lambda.
\end{equation}

Also note that for $q$ satisfying \eqref{2.61'}, we have $\frac12-\frac1q \leq \frac2n$. By Sobolev estimates, if $\alpha=n(\frac12-\frac1q)$
\begin{multline}\label{2.70}
\|\big(1-\beta(P/\la)\big)(-\Delta_g-\la^2+i \e(\la)\la)^{-1}f\|_q \\
\le \|(-\Delta_g)^{\frac\alpha2}\big(1-\beta(P/\la)\big)(-\Delta_g-\la^2+i \e(\la)\la)^{-1}f\|_2
\end{multline}
Since the symbol of the operator on the right side of \eqref{2.70} satisfies 
\begin{equation}\label{2.71}
\tau^\alpha \big(1-\beta(\tau/\la)\big)(\tau^2-\la^2+i \e(\la)\la)^{-1}\le \la^{\alpha-2},
\end{equation}
a combination of \eqref{2.70} and \eqref{2.71} yields the bounds
\begin{equation}\label{2.72}
\|I\|_q\le \la^{n(\frac12-\frac1q)-2}\|f\|_2,
\end{equation}
which is better than the right side of \eqref{2.60} and \eqref{2.61}, due to the condition on $\e(\la)$ and $\delta(\la,r)$.

If we combine \eqref{2.66}, \eqref{2.69}, \eqref{2.85'} and \eqref{2.72}, we conclude that for $\la \ge \Lambda$
we have
\begin{multline}\label{2.74}
\|(H_V-\la^2+i \e(\la)\la)^{-1}f\|_{L^q(M)}
\le C_{V,r} \,  \delta(\la,r) \,
\la^{n(\frac1r-\frac1q)}\la^{\sigma(r)-1}\|f\|_{L^2(M)} \\
+\frac12 \, \| (H_V-\la^2+i \e(\la)\la)^{-1}f\bigr\|_{L^q(M)}.
\end{multline}
By \eqref{2.63}, this leads to \eqref{2.60} and \eqref{2.61} for $\la \ge \Lambda$ since
we are assuming that $f\in L^2(M)$. On the other hand, by \eqref{spectral}, the quasimode estimates for $1\le \la < \Lambda$ follow as a corollary of the special case when $\la=\Lambda$.

To finish the proof of Theorem \ref{thm2.3} we shall need the following lemma which gives us \eqref{2.63}.
\begin{lemma}\label{Sobv}
 Assume  $(M,g)$ is a compact Riemannian
manifold of dimension $n\ge3$, if $V\in L^{n/2}(M)$, there exists a constant $N_0>1$ large enough such that 
\begin{equation}\label{2.76}
\|u\|_{q}\le \bigl\|(-\Delta_g+V+N_0)u\bigr\|_{p(q)},\,\,\,\text{if}\,\,\,\tfrac{1}{p(q)}-\tfrac1q=\tfrac2n \,\,\text{and}\,\, \tfrac{n}{n-2}<q< \infty.
\end{equation}
\end{lemma}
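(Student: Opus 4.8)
The plan is to establish \eqref{2.76} by the same bootstrapping mechanism used in the proof of Theorem~\ref{thm2.1}, but now anchored to the elliptic (nonselfadjoint spectral parameter replaced by a large positive constant $N_0$) resolvent bounds for $-\Delta_g$ rather than the oscillatory ones. First I would record the unperturbed Sobolev-type inequality: for $1<p(q)<2<q<\infty$ with $\tfrac1{p(q)}-\tfrac1q=\tfrac2n$, one has $\|u\|_q\le C\|(-\Delta_g+N_0)u\|_{p(q)}$ uniformly in $N_0\ge 1$; indeed $(-\Delta_g+N_0)^{-1}$ maps $L^{p(q)}\to L^q$ with norm $O(1)$ by the standard elliptic estimate $\|(-\Delta_g+N_0)^{-1}\|_{L^{p(q)}\to L^q}\lesssim N_0^{-1+\frac n2(\frac1{p(q)}-\frac1q)}=O(1)$ (this is, e.g., a consequence of the heat-kernel / Mikhlin-type bounds on compact manifolds, or of Corollary~\ref{Sob} with $\alpha=2$ when $n\ge5$; for $n=3,4$ one uses the classical Sobolev embedding directly since then $\tfrac n{n-2}<q$ forces $q\le$ the relevant Sobolev exponent range). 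I would also need the ``partial'' boundedness $\|W(-\Delta_g+N_0)^{-1}\|_{L^{\overline p}\to L^q}\le C\|W\|_{L^n}$ with $\tfrac1{\overline p}=\tfrac1{p(q)}-\tfrac1n$, which follows exactly as in the proof of \eqref{l.1} in Lemma~\ref{simple}, replacing $(-\Delta_g-\la^2-i\mu\e(\la)\la)^{-1}$ by $(-\Delta_g+N_0)^{-1}$ and using Hölder plus the elliptic bound $\|(-\Delta_g+N_0)^{-1}\|_{L^{p(q)}\to L^{p(q)'}}=O(1)$ when $\tfrac1{p(q)}-\tfrac1{p(q)'}=\tfrac2n$, i.e.\ $p(q)=\tfrac{2n}{n+2}$, and otherwise via Stein interpolation between this endpoint and the trivial $L^2$ bound — here one only needs the range $\tfrac n{n-2}<q<\infty$, equivalently $\tfrac{2n}{n+2}<p(q)<\tfrac{2n}{n-2}$, to keep $\overline p$ and its conjugate admissible.

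Next I would split $V=V_{\le N}+V_{>N}$ as in \eqref{2.13}, so that $\|V_{>N}\|_{L^{n/2}}\le\delta(N)\to0$ and $\|V_{\le N}\|_{L^\infty}\le N$, and write, via the second resolvent formula \eqref{1.4} with spectral parameter $\zeta=-N_0$ (legitimate once $N_0$ is chosen so large that $-N_0\notin\mathrm{Spec}\,H_V$, which holds since $\mathrm{Spec}\,H_V\subset[0,\infty)$ by \eqref{1.3}),
\begin{align*}
(H_V+N_0)^{-1}f &= (-\Delta_g+N_0)^{-1}f \\
&\quad - \bigl[|V_{>N}|^{1/2}(-\Delta_g+N_0)^{-1}\bigr]^*\bigl(V^{1/2}\cdot(H_V+N_0)^{-1}f\bigr) \\
&\quad - \bigl[|V_{\le N}|^{1/2}(-\Delta_g+N_0)^{-1}\bigr]^*\bigl((V_{\le N})^{1/2}\cdot(H_V+N_0)^{-1}f\bigr).
\end{align*}
The first term is bounded in $L^q$ by $C\|f\|_{p(q)}$ by the unperturbed Sobolev inequality. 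The middle term is bounded in $L^q$, using the $L^n$-partial bound above and Hölder, by $C\|V_{>N}\|_{L^{n/2}}^{1/2}\|V\|_{L^{n/2}}^{1/2}\,\|(H_V+N_0)^{-1}f\|_q$, which we can absorb into the left-hand side by first fixing $N$ so large that this constant is $<\tfrac12$ — this is the only place where the criticality $V\in L^{n/2}$ (rather than merely $L^1$) is used in an essential way, and it is the analogue of the estimate \eqref{2.18} for $II$. Finally, the third term has bounded potential, so $(V_{\le N})^{1/2}\cdot(H_V+N_0)^{-1}f\in L^2$ once $(H_V+N_0)^{-1}f\in L^2$, and since $(-\Delta_g+N_0)^{-1}$ maps $L^2$ into $L^q$ for the range of $q$ in question (Sobolev embedding $H^2\hookrightarrow L^q$ when $q\le\tfrac{2n}{n-4}$, and for larger $q$ in dimensions $n=3,4$ directly), with norm $\le CN_0^{-1+\frac n2(\frac12-\frac1q)}$, this term is finite and in fact $O_N(1)\cdot\|f\|_2$ for $f\in L^2$.

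Here the bootstrapping subtlety — and the main obstacle — is the same one flagged in the proof of Theorem~\ref{thm2.1}: to move the middle term to the left we must already know a priori that $\|(H_V+N_0)^{-1}f\|_q<\infty$, and the only cheap input is the appendix bound \eqref{s51}, which gives membership in $L^q$ only up to $q\le\tfrac{2n}{n-2}$. So I would first prove \eqref{2.76} for $\tfrac n{n-2}<q\le\tfrac{2n}{n-2}$ directly by the above absorption argument applied to $f\in L^2$ (dense in $L^{p(q)}$), which is legitimate since the left side is then known finite. Having \eqref{2.76} in that first range, $(H_V+N_0)^{-1}$ maps $L^{p(q_0)}\to L^{q_0}$ for $q_0$ up to $\tfrac{2n}{n-2}$; then I iterate: given the bound up to some exponent $q_0$, the identity above together with the mapping $(-\Delta_g+N_0)^{-1}\colon L^{q_0}\to L^{q_1}$ (gaining two derivatives, hence $\tfrac1{q_1}=\tfrac1{q_0}-\tfrac2n$ once $q_0<\tfrac n2$, or unrestrictedly into any $L^{q_1}$, $q_1<\infty$, once $q_0\ge\tfrac n2$) upgrades the conclusion to $q_1$, and finitely many steps reach every $q<\infty$ (and in particular all $q\le\tfrac{2n}{n-4}$ when $n\ge5$). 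At each step the absorbed middle term keeps the same small constant $<\tfrac12$ because $N$ was fixed once and for all; only the constant $C_{V,q}$ (depending on $N$ and on the elliptic norms) grows. This completes the proof; I expect the only genuinely delicate bookkeeping to be checking that $\overline p$ and the various Hölder exponents stay in the admissible open ranges throughout the iteration, which is exactly why the hypothesis is stated as $\tfrac n{n-2}<q<\infty$ rather than $2<q<\infty$.
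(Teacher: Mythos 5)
Your decomposition via the second resolvent formula and the absorption of the $V_{>N}$ piece (term $II$) by taking $N$ large are exactly the paper's moves, and your observation that the a priori finiteness $\|(H_V+N_0)^{-1}f\|_q<\infty$ only comes for free from \eqref{s51} in the range $q\le\tfrac{2n}{n-2}$ is correctly identified as the crux of the bootstrap. However, your treatment of the bounded-potential piece (term $III$) has a genuine gap. You bound it by $O_N(1)\|f\|_2$, using $(-\Delta_g+N_0)^{-1}\colon L^2\to L^q$. But the lemma requires the final estimate $\|(H_V+N_0)^{-1}f\|_q\lesssim\|f\|_{p(q)}$, and on a compact manifold with $p(q)<2$ one only has $\|f\|_{p(q)}\lesssim\|f\|_2$, not the converse, so a bound of the form $O_N(1)\|f\|_2$ cannot be folded into $\lesssim\|f\|_{p(q)}$ and is also not a small multiple of $\|(H_V+N_0)^{-1}f\|_q$; the argument does not close. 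The correct mechanism is to keep the inner factor $V^{1/2}\cdot(H_V+N_0)^{-1}f$ in $L^{\overline p}$ (not $L^2$) via H\"older with $V^{1/2}\in L^n$, so that the outer elliptic resolvent only needs to gain one derivative across $L^{\overline p}\to L^q$, leaving a spare derivative that converts into the factor $N_0^{-1/2}$ of \eqref{2.83}; this yields $\|III\|_q\le CN_0^{-1/2}N^{1/2}\|V\|_{L^{n/2}}^{1/2}\|(H_V+N_0)^{-1}f\|_q$, which is what makes $III$ absorbable once $N_0$ is chosen large after $N$ is fixed. Relatedly, the way you wrote the decomposition, with inner factor $(V_{\le N})^{1/2}$ in the third term rather than $V^{1/2}$, is not what the resolvent identity \eqref{1.4} produces when one splits only the outer weight $|V|^{1/2}=|V_{>N}|^{1/2}+|V_{\le N}|^{1/2}$; both inner factors should be $V^{1/2}$.

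A second, smaller difference: you propose an iteration scheme to reach $q>\tfrac{2n}{n-2}$, whereas the paper simply observes that the bound is self-dual, so proving it for $\tfrac n{n-2}<q\le\tfrac{2n}{n-2}$ suffices (the dual pair $(q',p')$ lands in the same restricted range). Your iteration is not obviously wrong, but it introduces exactly the bookkeeping issues you flag at the end, and each step re-encounters the unresolved absorption of term $III$; duality sidesteps both.
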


The condition on $q$ in \eqref{2.76} is necessary since we do not have the corresponding Sobolev inequalities even for the non perturbed operator at the two endpoints $p=1$ or $q=\infty$. Also observe that for $q$ satisfying \eqref{2.61'}, we have $p(q)\le 2$. Thus, by the above inequality, we have $\|u\|_{L^q(M)}<\infty$ for $q$ satisfying \eqref{2.61'} if $u\in \text{Dom}(H_V)$, which implies \eqref{2.63}.

To prove \eqref{2.76}, note that it is equivalent to showing that
\begin{equation}\label{2.77}
\bigl\| (H_V+N_0)^{-1}f\bigr\|_{L^q(M)}
\le C\|f\|_{L^p(M)}
\,\,\,\quad\text{if} \,\,\, f\in L^2(M),
\end{equation}
with $(p,q)$ as in \eqref{2.76}. 
 By duality, it suffices prove this inequality when 
\begin{equation}\label{2.78}
\tfrac{n}{n-2}<q\le \tfrac{2n}{n-2}.
\end{equation}

We are assuming \eqref{2.78}, since by \eqref{s51}
in the appendix we have
$$u\in L^q(M), \quad 2\le q\le \tfrac{2n}{n-2}
\, \, \text{if } \, \, \,
u\in \Dom(H_V).$$
Thus for $q$ as in \eqref{2.78}
\begin{equation}\label{2.79}
\bigl\| (H_V+N_0)^{-1}f\bigr\|_{L^q(M)}
<\infty \quad \text{if } \, \, f\in L^2(M).
\end{equation}

As before, in proving \eqref{2.77}, since $L^2$ is dense in $L^p$,
we shall assume that $f\in L^2(M)$ to be
able to use \eqref{2.12} to justify a bootstrapping
argument that follows.

We shall use the second resolvent formula \eqref{1.4} to write 
\begin{align}\label{2.80}
(H_V+N_0)^{-1}f
= &(-\Delta_g+N_0)^{-1}f
-\bigl[ |V_{>N}|^{1/2}(-\Delta_g+N_0)^{-1}\bigr]^*
\bigl(V^{\frac12}\cdot (H_V+N_0)^{-1}f\bigr)
\notag
\\
&\qquad-\bigl[ |V_{\le N}|^{1/2}(-\Delta_g+N_0)^{-1}\bigr]^*
\bigl(V^{\frac12}\cdot (H_V+N_0)^{-1}f\bigr)
\\
=&I-II-III. \notag
\end{align}

By the Sobolev estimates for the 
unperturbed operator we have
\begin{equation}\label{2.81}
\|I\|_q\le C\|f\|_p,
\end{equation}
where the constant $C$ does not depend on $N_0$. Similarly our earlier arguments yield
$$\|II\|_q\le C \|V_{>N}\|^{\frac12}_{L^{n/2}}
\bigl\|V^{\frac12}\cdot
(H_V+N_0)^{-1}f
\bigr\|_{\overline{p}} \le 
C\|V_{>N}\|^{\frac12}_{L^{n/2}} \|V\|^{\frac12}_{L^{n/2}}
\cdot
\bigl\| (H_V+N_0)^{-1}f\bigr\|_{L^q},
$$
using H\"older's inequality and the fact that $\frac{1}{\overline{p}}=\frac1q+\frac1n$ in the last step.  By \eqref{2.15} we can
fix $N$ large enough so that
$C\|V_{>N}\|^{\frac12}_{L^{n/2}} \|V\|^{\frac12}_{L^{n/2}}
<1/4$, yielding the bounds
\begin{equation}\label{2.82}
\|II\|_q<\frac14 \, \bigl\|
(H_V+N_0)^{-1}f
\bigr\|_q.
\end{equation}

To bound the third term, note that since
$( \frac{N_0}{\tau^2+N_0})^{\frac12}
$
is a symbol of order 0, by Theorem 4.3.1 in \cite{SFIO2},
$$(-\Delta_g/N_0+1)^{-\frac12}$$
is a 0 order pseudo-differential operator, thus
\begin{equation}\label{2.83}
\|(-\Delta_g+N_0)^{-1}f\|_q\le C\|(-\Delta_g+N_0)^{-\frac12}f\|_{\overline{p}} \le CN_0^{-1/2}\|f\|_{\overline{p}},
\end{equation}
using Sobolev estimates and the fact that $\frac{1}{\overline{p}}=\frac1q+\frac1n$ in the first inequality. Thus, 
\begin{equation}\label{2.84}
\begin{aligned}
\|III\|_q\le& CN_0^{-1/2}N^{1/2}\bigl\|V^{\frac12}\cdot
(H_V+N_0)^{-1}f
\bigr\|_{\overline{p}} \\ \le& CN_0^{-1/2}N^{1/2}\|V\|^{\frac12}_{L^{n/2}} \, \bigl\|
(H_V+N_0)^{-1}f
\bigr\|_q.
\end{aligned}
\end{equation}

If we choose $N_0$ such that $CN_0^{-1/2}N^{1/2}\|V\|^{\frac12}_{L^{n/2}}<\frac14$ , \eqref{2.81}, \eqref{2.82} and \eqref{2.84} imply
$$\|(H_V+N_0)^{-1}f\|_{L^q(M)}
\le C\|f\|_{L^p(M)}
+\frac12 \, \| (H_V+N_0)^{-1}f\bigr\|_{L^q(M)}.$$
By \eqref{2.79}, this leads to \eqref{2.76}, the proof is complete.
\end{proof}

Let us next show how Theorem~\ref{unifSob} is also a corollary of Theorem~\ref{thm2.1} and Theorem~\ref{thm2.3}.

\begin{proof}[Proof of Theorem~\ref{unifSob}]
We shall use Theorem~\ref{thm2.1} with
\begin{equation}\label{2.35}
\delta(\la,r)=\e(\la)\equiv 1, \, \, \, \la\ge 1,
\end{equation}
and $r=q$ and $r=p= p(q)'$ satisfying \eqref{1.5}.

Then by the spectral projection estimates of the fourth author \cite{sogge88} we have the quasimode estimates
\eqref{2.2} for the unperturbed operators $H_0=-\Delta_g$.  The uniform Sobolev estimates \eqref{2.7}
are due to Dos Santos Ferreira, Kenig and Salo~\cite{DKS}.
Also, it is a simple exercise using \eqref{1.12} to check that for $(p,q)$ as above we have
$\sigma(q)+\sigma(p')-2<0$, and so \eqref{2.5} is also
trivially valid.  

Thus, by inequality \eqref{2.8}
in Corollary~\ref{corr2.2} and Theorem~\ref{thm2.3}, we have \eqref{qm} for
$q\in [\tfrac{2(n+1)}{n-1}, \tfrac{2n}{n-4}]$ if $n\ge 5$, and $q\in [\tfrac{2(n+1)}{n-1}, \infty)$ if $n=3, \text{or}\,\, 4$.
If we use the bound for $q=\tfrac{2(n+1)}{n-1}$ along
with H\"older's inequality and the trivial quasimode
estimate for $q=2$ (which follows from the spectral theorem), we also see that \eqref{qm} is valid
for $2<q<\tfrac{2(n+1)}{n-1}$.

The other inequality in Corollary~\ref{corr2.2},
\eqref{2.9}, also trivially implies the uniform
Sobolev estimates \eqref{1.6} in the region where
$\Re \zeta \ge 1$.  Since the bounds for $\{\zeta \in \Omega_\delta: \, \, \Re \zeta<1\}$ are valid for the unperturbed operators $H_0=-\Delta_g$ by \cite{DKS} we can use the quasimode estimates \eqref{qm} for $\la=1$
and the proof that \eqref{2.7} implies \eqref{2.9} to see that the uniform Sobolev bounds in Theorem~\ref{unifSob} in the region where $\Re \zeta<1$ are also valid,
which finishes the proof. \end{proof}

%

Next, let us also see how we can use Theorem~\ref{thm2.1} and Theorem~\ref{thm2.3} to prove Theorem~\ref{sphere} which says that when $(M,g)$ is the standard
sphere we can improve Theorem~\ref{unifSob} by obtaining the inequalities for a larger range of exponents when $V\in L^{n/2}(S^n)$.

\begin{proof}[Proof of Theorem~\ref{sphere}]
It is easy to modify the proof of Theorem~\ref{unifSob} to obtain the uniform Sobolev estimates for $S^n$ which involve 
the improved range of exponents in \eqref{1.17}.    As in the preceding proof we shall use Theorem~\ref{thm2.1}
with $\delta(\la,r)=\e(\la)\equiv 1$ when $\la\ge1$.  Here $r=q$ and $r=p=p(q)'$ are assumed to be
as in \eqref{1.17}.  A simple calculation using \eqref{1.12} then shows that we have $\sigma(q)+\sigma(p')-2\in [-1, -1+1/2n]$
and so \eqref{2.5} is trivially valid.  As a result, $\text{for}\,\, q<\tfrac{2n}{n-3}$, we would have the bounds in \eqref{1.21} and \eqref{1.22} when $\Re \zeta$ and
$\la$ are larger than one, respectively, if we had the quasimode estimates \eqref{2.2}  and the uniform Sobolev estimates \eqref{2.7}
for the unperturbed operators
$H_0$, for $\e(\la)$ and $\delta(\la,r)$ as above and exponents satisfying \eqref{1.17}.  
The quasimode estimates are due to the fourth author
\cite{sogge86} (see also \cite{SHSo}), and the uniform Sobolev estimates are due to S.\ Huang and this author~\cite{SHSo}. 

Since the remaining larger exponents $q$ in \eqref{1.22} follows from the case $q<\tfrac{2n}{n-3}$ and Theorem~\ref{thm2.3}, and the cases where $\zeta\in \Omega_\delta$ has $\Re \zeta<1$ or $\la\ge 1$ in \eqref{1.21} follow from our earlier arguments, 
the proof is complete.
\end{proof}

\subsection*{ Spectral projection estimates for larger exponents}

${}$ 
\newline

 Let us conclude this section by briefly reviewing how if, in addition to assuming \eqref{1.2} (i.e., $V\in L^{n/2}$), we assume
 that $V_{-}=\max\{0, -V\} \in {\mathcal K}(M)$, then we can obtain spectral projection and quasimode estimates for exponents which
 are larger than those in Theorems \ref{unifSob}--\ref{sphere} or Corollary~\ref{corr2.2}.
 
 Recall that $V$ is in the Kato class ${\mathcal K}(M)$ if
\begin{equation}\label{kato}\lim_{r\searrow 0} \sup_{x}\int_{B_r(x)}
h_n\bigl(d_g(x,y)\bigr) \, |V(y)| \, dy=0,
\end{equation}
where 
\begin{equation*}
h_n(r)=
\begin{cases}
\log(2+ r^{-1}), \quad \text{if } \, n=2
\\
r^{2-n},\quad \text{if } \, n\ge3.
\end{cases}
\end{equation*}
Here $d_g(x,y)$ is the geodesic distance between $x$ and $y$ in $M$ and $B_r(x)$ denotes the geodesic ball of radius
$r$ about $x$.

Let us first show that we can use estimates like \eqref{2.8} to obtain certain spectral projection estimates. Specifically, if 
\begin{equation}\label{chi}
\chi^V_{[\la,\la+\e(\la)]}=\1_{[\la,\la+\e(\la)]}(\sqrt{H_V})
\end{equation}
is the projection onto the part of the spectrum of $\sqrt{H_V}$ in the interval $[\la,\la+\e(\la)]$, then, by the spectral theorem \eqref{2.8}
implies that
\begin{equation}\label{2.36}
\|\chi^V_{[\la,\la+\e(\la)]}f\|_r \le C_V \delta(\la,r) \la^{\sigma(r)} \|f\|_2, \, \, \, \la\ge 1.
\end{equation}
To see this one takes $u$ in \eqref{2.8} to be $\chi^V_{[\la,\la+\e(\la)]}f$ and then uses the spectral theorem to see that
that for this choice of $u$ the right side of \eqref{2.8} is dominated by the right hand side of \eqref{2.36}. 

Next we recall that, if $V_{-}\in {\mathcal K}(M)$, then we have favorable heat kernel bounds (see \cite{Sturm}), and, consequently, if $\beta\in C^\infty_0((1/2,1))$
is a nonnegative function with integral one and if 
$$\widetilde \beta_\la(\tau)=\int_0^\infty e^{-t\tau} \la^2 \beta(\la^2 t) \, dt, \quad
\tau\ge 0, \, \, \la\ge 1,
$$
we have
\begin{equation}\label{2.37}
\bigl\| \widetilde \beta_\la(H_V)\bigr\|_{L^r\to L^q}\lesssim \la^{n(\frac1r-\frac1q)}, \quad \text{if } \, 
2\le r\le q\le \infty.
\end{equation}
For details see \S 6 of \cite{BSS}\footnote{In \cite{BSS} this inequality was only proved under the 
stronger assumption that $V\in {\mathcal K}$; however, since the proof only relied on the heat kernel estimates of
Sturm~\cite{Sturm} which are valid when $V_-\in {\mathcal K}$, it also yields \eqref{2.37}}.  
Arguing as in \cite{BSS} it is a simple matter to use the spectral theorem
and \eqref{2.37} to see that if \eqref{2.36} is valid then we have
\begin{equation}\label{2.38}
\bigl\| \chi^V_{[\la,\la+\e(\la)]}f\bigr\|_q \lesssim \delta(\la,r)\la^{\sigma(r)+n(\frac1r-\frac1q)}\|f\|_2, \, \, \la\ge 1, \, \, \,
\text{if } \, \, q\in (r,\infty]
\end{equation}
when $V_{-}\in {\mathcal K}(M)$.

Based on this and the aforementioned relationships between spectral projection estimates and quasimode estimates, 
if $V\in L^{n/2}(M)$ and $V_{-}\in {\mathcal K}(M)$, by Theorem~\ref{unifSob}, for all $(M,g)$ we can also obtain
\eqref{2.36} with $\e(\la)=\delta(\la,r)\equiv 1$ when $r>\tfrac{2n}{n-4}$ if $n\ge 5$, or $r=\infty$ if $n=3,\, 4$, since $\sigma(r)+n(\tfrac1r-\tfrac1q)=\sigma(q)$ if
$\tfrac{2(n+1)}{n-1}\le r<q\le \infty$.  Thus, for such exponents we recover the universal bounds
in \cite{BSS} while for smaller ones the ones Theorem~\ref{unifSob} is stronger since it only requires $V\in L^{n/2}(M)$.

In the case of the standard sphere $S^n$, if $V\in L^{n/2}(M)$ and $V_{-}\in{\mathcal K}(M)$ , we can similarly obtain \eqref{2.36} with $\e(\la)=\delta(\la,r)\equiv 1$ for $r=\infty$ when
$n=3,\,4$, and $r> \tfrac{2n}{n-4}$ when $n\ge5$.



We note that Theorem~\ref{unifSob} says that when $n=3$ or $n=4$ we have \eqref{2.36} with $\e(\la)=\delta(\la,r)\equiv 1$ 
for all $2<r<\infty$.  As noted in \cite{BSS}, such spectral projection estimates can break down for $r=\infty$ on $S^n$ in all dimensions
if one merely assumes $V\in L^{n/2}(S^n)$, and there is related recent results for general manifolds in Frank and Sabin~\cite{FS}.


We have focused here on variants of the spectral projection estimates for larger exponents than the ones in Theorems \ref{unifSob} and
\ref{sphere}.  As we shall see in the next two sections, there are similar results corresponding to Theorems \ref{nonpossob}
and \ref{torussob}.

\newsection{Improved bounds for manifolds of nonpositive curvature}\label{nonpossec}

The main purpose of this section is to prove Theorem~\ref{nonpossob}.  Consequently, we shall assume throughout this section that $n\ge 3$ and that $(M,g)$ is
an $n$-dimensional manifold all of whose sectional curvatures are nonpositive.  In \S~\ref{2d} we shall prove that the quasimode estimates in Theorem~\ref{nonpossob} are valid in the two dimensional case if in addition to \eqref{1.2} we assume that $V$
is a Kato potential.

By Corollary~\ref{corr2.2} and Theorem~\ref{thm2.3} we would have
Theorem~\ref{nonpossob} if we knew that
for $(p,q)$ exponents satisfying
\begin{equation}\label{3.1}
\min\bigl(p',q)>\tfrac{2(n+1)}{n-1}
\quad \text{and } \, \, \tfrac1p-\tfrac1q=\tfrac2n
\end{equation}
we had the classical quasimode estimates
\begin{equation}\label{3.2}
\|u\|_r\lesssim \la^{\sigma(r)-1}\, \bigl(\e(\la)\bigr)^{-1/2}
\|(-\Delta_g-(\la+i\e(\la))^2\|_2, \, \,
\text{for } \, r=q,p', \, \, \text{and } \, \la\ge 1,
\end{equation}
as well as
\begin{equation}\label{3.3}
\|u\|_q\lesssim \|(-\Delta_g-(\la+i\e(\la))^2)u\|_p,
\quad \la\ge1,
\end{equation}
where here and throughout this section we shall take
\begin{equation}\label{3.4}
\e(\la)=\bigl(\log(2+\la)\bigr)^{-1}.
\end{equation}
Even though we have replaced $\la^2+i\e(\la)\la$ by
$(\la+i\e(\la))^2$ here to simplify some calculations
to follow, \eqref{3.2} and \eqref{3.3} are equivalent to
\eqref{2.2} and \eqref{2.6}, respectively, with 
$\delta(\la,r)=\sqrt{\e(\la)}$ as in \eqref{3.4} in the former.

Even though the first inequality is a consequence of spectral projection estimates in
Hassell and Tacy~\cite{HassellTacy} following earlier
results of B\'erard~\cite{Berard} and even though  the 
resolvent estimates are in \cite{BSSY} and \cite{ShYa},
let us sketch their proofs since we shall need to adapt
them in order to show that we also get improved 
quasimode estimates for $q=q_c=\tfrac{2(n+1)}{n-1}$, which is missing in \eqref{3.2}.  We cannot appeal
to Corollary~\ref{corr2.2} to obtain these estimates
since it is not known whether the uniform Sobolev estimates \eqref{3.3} are valid when $q=q_c$.  The quasimode estimates for this exponent are analogs involving
$L^{n/2}$ potentials of those of two of us in
\cite{SBLog}, which treated the $V\equiv0$ case.

Let us start with the sketch of \eqref{3.2}.  Since
both $r=p'$ and $r=q$ in \eqref{3.2} are smaller than
$2n/(n-4)$ when $n\ge 4$, by the discussion at the end of the last section, it is simple to see that \eqref{3.2}
is equivalent to the spectral projection estimates
for the unperturbed operator $H_0=-\Delta_g$:
\begin{equation}\tag{3.2$'$}\label{3.2'}
\|\chi_{[\la, \la +\e(\la)]} f\|_{r} \lesssim
\sqrt{\e(\la)} \, \la^{\sigma(r)} \, \|f\|_2,
\, \, \la\ge 1, \, \, r> \tfrac{2(n+1)}{n-1}, \, \,
\end{equation}
with $r$ as in \eqref{3.2} (see \cite{SoggeZelditchQMNote}).  We shall actually indicate why this inequality is valid for all $r>\tfrac{2(n+1)}{n-1}$.  Here $\chi_{[\la, \la +\e(\la)]}$ is the operator projecting onto the part of the spectrum of $\sqrt{-\Delta_g}$ in the shrinking 
intervals $[\la, \la +\e(\la)]$.

To establish this fix a  real-valued function $a
\in {\mathcal S}(\R)$
satisfying
\begin{equation}\label{3.5}
\text{supp } \Hat a \subset (-\delta_0,\delta_0)
\quad \text{and  } \, \, a(t)\ge 1, \, \, t\in [-1,1],
\end{equation}
where $\delta_0>0$ will be specified later on.
We then claim that \eqref{3.2'} would be a consequence of the following:
\begin{equation}\label{3.6}
\bigl\| a\bigl((\e(\la))^{-1}(P-\la)\bigr)h\bigr\|_r\lesssim \sqrt{\e(\la)} \, 
\la^{\sigma(r)}\, \|h\|_2, \, \, \la\ge 1, \,
r> \tfrac{2(n+1)}{n-1}, \quad
\text{if } \, \, 
P=\sqrt{-\Delta_g}.
\end{equation}
To verify this claim one just takes $h$ to be
$\widetilde\chi_{[\la, \la +\e(\la)]} f$
where $$\widetilde \chi_{[\la,\la+\e(\la)]}(\tau)=\1_{[\la,\la+\e(\la)]}(\tau) \cdot
\bigl(a\bigl((\e(\la))^{-1}(\la-\tau)\bigr)\bigr)^{-1}.$$
Since this function has sup-norm
smaller than one
and since $a\bigr((\e(\la))^{-1}(P-\la)\bigr)h= \chi_{[\la, \la +\e(\la)]} f$, one obtains \eqref{3.2'} from \eqref{3.6}
and the spectral theorem.

We next observe that, by duality, \eqref{3.6} is
equivalent to the statement that 
$$\bigl\| a\bigl((\e(\la))^{-1}(P-\la)\bigr)h\bigr\|_{L^{r'}(M)\to L^2(M)} \lesssim \sqrt{\e(\la)} \,\la^{\sigma(r)}, \, \, \la\ge1 \quad \text{if } \, r>\tfrac{2(n+1)}{n-1}.$$
By a routine $TT^*$ argument this is equivalent to the
following
\begin{multline}\label{3.6'}\tag{3.6$'$}
\bigl\| b\bigl((\e(\la))^{-1}(P-\la)\bigr)\bigr\|_{L^{r'}(M)\to L^r(M)}\lesssim \e(\la)\la^{2\sigma(r)}, \\\la\ge 1, \quad\text{if } \, r>\tfrac{2(n+1)}{n-1}, 
\, \, \text{and } \, b(\tau)=\bigl(a(\tau)\bigr)^2.
\end{multline}

Next, since, by the first part
of \eqref{3.5}, $\Hat b$ is 
supported in $(-2\delta_0,2\delta_0)$, it follows from Fourier's inversion theorem, Euler's formula and the first part
of \eqref{3.5} that
\begin{multline}\label{3.7}
b\bigl((\e(\la))^{-1}(P-\la)\bigr)h
=\frac{\e(\la)}{\pi}
\int_{-T}^T \Hat b\bigl(\e(\la)t\bigr)\,
 e^{-it\la} \, \bigl(\cos tP\bigr)h \, dt 
\\+b\bigl((\e(\la))^{-1}(
P+\la)\bigr)h,
\quad \text{where } \, \, T=2\delta_0 \cdot(\e(\la))^{-1}.
\end{multline}
Since $\la\ge1$ and $P\ge0$ using crude eigenfunction bounds one obtains
$$\bigl\| b\bigl((\e(\la))^{-1}(
P+\la)\bigr)\bigr\|_{L^1(M)\to L^\infty(M)} =O(\la^{-N}), \quad \la\ge1, \, \, \, N=1,2,3,\dots,$$
and consequently we would have \eqref{3.6'} if we could show that for small enough fixed $\delta_0>0$ we have
\begin{multline}\label{3.8}
\Bigl\| 
\int_{-T}^T \Hat b\bigl(\e(\la)t\bigr)\,
 e^{-it\la} \, \cos tP \, dt 
\Bigr\|_{L^{r'}(M)\to L^r(M)} \lesssim \la^{2\sigma(r)}, \, \, \la\ge 1, \\ \text{if } \, r>\tfrac{2(n+1)}{n-1},
\quad \text{and } \, \, T=2\delta_0 \cdot (\e(\la))^{-1}.
\end{multline}

Next, let us fix $\eta\in C^\infty_0(\R)$ satisfying
\begin{equation}\label{3.9}
\eta(t)=1, \, \, t\in (-1/2,1/2) \quad \text{and } \, \, \text{supp }\eta \subset (-1,1).
\end{equation}
Then it follows from the universal spectral projection estimates of one of us \cite{sogge88} that
\begin{equation}\label{3.10}
\Bigl\| \int \eta(t) \, \Hat b\bigl(\e(\la)t\bigr)\,
 e^{-it\la} \, (\cos tP)f \, dt \Bigr\|_{r}\lesssim \la^{2\sigma(r)} \|f\|_{r'}, \, \, \la\ge 1,
 \end{equation}
 for all $r>2$.  Consequently we would have \eqref{3.8} if we could show that when
 $\delta_0$ as in \eqref{3.5} and \eqref{3.8} is sufficiently small we have
\begin{equation}\label{3.11}
\Bigl\| 
\int (1-\eta(t))\, \Hat b\bigl(\e(\la)t\bigr)\,
 e^{-it\la}  \cos tP \, dt 
\Bigr\|_{L^{r'}(M)\to L^r(M)} \lesssim \la^{2\sigma(r)}, \,  \la\ge 1,  \text{if }  r>\tfrac{2(n+1)}{n-1}.
\end{equation}

Since the function
$$\tau \to \Psi_\la(\tau)=\int (1-\eta(t))\, \Hat b\bigl(\e(\la)t\bigr)\,
 e^{-it\la}  \cos t\tau \, dt$$
 clearly satisifes
 $$|\Psi_\la(\tau)|\lesssim (\e(\la))^{-1},$$
 it follows from the spectral theorem that
 \begin{equation}\label{3.12}
\Bigl\| 
\int (1-\eta(t))\, \Hat b\bigl(\e(\la)t\bigr)\,
 e^{-it\la}  \cos tP \, dt 
\Bigr\|_{L^{2}(M)\to L^2(M)} \lesssim (\e(\la))^{-1}=\log(2+\la).
\end{equation}
We claim that if we also had for some $c_0<\infty$
 \begin{equation}\label{3.12.1}
\Bigl\| 
\int (1-\eta(t))\, \Hat b\bigl(\e(\la)t\bigr)\,
 e^{-it\la}  \cos tP \, dt 
\Bigr\|_{L^{1}(M)\to L^\infty(M)} \lesssim \la^{\frac{n-1}2} e^{c_0T}\lesssim \la^{\frac{n-1}2} \la^{c_0\delta_0},
\end{equation}
then for $\delta_0$ small enough depending on $r$, we would have \eqref{3.10}.  This just follows from
a simple interpolation argument and the observation that if $\theta =2/r$ then
$(1-\theta)\cdot \tfrac{n-1}2<2\sigma(r)$ provided that $r>\tfrac{2(n+1)}{n-1}$.

One can prove \eqref{3.12.1} using the Hadamard parametrix after lifting the calculation to the universal
cover of $(M,g)$ as in B\'erard~\cite{Berard} and Hassell and Tacy~\cite{HassellTacy} (see also \cite{SoggeHangzhou}).
This completes the proof of \eqref{3.6'} and hence that of \eqref{3.2}.

\medskip

The proof of \eqref{3.3} is very similar.  As in \cite[\S 2]{BSSY} we shall use the formula
\begin{equation}\label{3.13}
\bigl(-\Delta_g-(\la+i\e(\la))^2\bigr)^{-1}f
=\frac{i}{\la+i\e(\la)}\int_0^\infty e^{i\la t}e^{-\e(\la)t}\, (\cos tP)f \, dt.
\end{equation}

If $\eta$ is as in \eqref{3.9}, we shall write
$$\bigl(-\Delta_g-(\la+i\e(\la))^2\bigr)^{-1}=T^0_\la+T^1_\la+R_\la,$$
where if $T=2\delta_0\cdot (\e(\la))^{-1}$ is as in \eqref{3.7}
\begin{equation}\label{3.14}
T^0_\la = \frac{i}{\la+i\e(\la)}\int_0^\infty \eta(t) \, \eta(t/T) \, e^{i\la t}e^{-\e(\la)t}\, \cos tP \, dt
\end{equation}
is a local operator, while
\begin{equation}\label{3.15}
T^1_\la = \frac{i}{\la+i\e(\la)}\int_0^\infty (1-\eta(t)) \, \eta(t/T) \, e^{i\la t}e^{-\e(\la)t}\, \cos tP \, dt,
\end{equation}
and
\begin{equation}\label{3.16}
R_\la = \frac{i}{\la+i\e(\la)}\int_0^\infty (1- \eta(t/T)) \, e^{i\la t}e^{-\e(\la)t}\, \cos tP \, dt
\end{equation}

To prove \eqref{3.3}, by duality, it suffices to handle the case where $q\in (\tfrac{2(n+1)}{n-1}, \tfrac{2n}{n-2}]$, in which case
the estimate is equivalent to the statement that
\begin{multline}\label{3.3'}\tag{3.3$'$}
\bigl\| \, (-\Delta_g-(\la+i\e(\la))^2)^{-1}\, \bigr\|_{L^{p(q)}(M)\to L^q(M)}=O(1), \, \, \la\ge 1
\\
\text{if } \, q\in (\tfrac{2(n+1)}{n-1},\tfrac{2n}{n-2}] \quad \text{and } \, \, \tfrac1{p(q)}-\tfrac1q=\tfrac2n.
\end{multline}
In view of the above decomposition, this would follow from
\begin{equation}\label{3.17}
\|S_\la\|_{L^{p(q)}(M)\to L^q(M)}=O(1)
\quad \text{if } \, \, S_\la=T^0_\la, \, T^1_\la \, \, \text{or } \, R_\la.
\end{equation}

As observed in \cite{ShYa} the bounds for $R_\la$ are in immediate consequence of \eqref{3.2'} and
a simple orthogonality argument after observing that $\sigma(q)+\sigma((p(q))')=1$ if $(p(q),q)$ are as in
\eqref{3.3'} and
$$\tau\to m_\la(\tau)=\frac{i}{\la+i\e(\la)}\int_0^\infty (1- \eta(t/T)) \, e^{i\la t}e^{-\e(\la)t}\, \cos t\tau \, dt$$
satisfies
\begin{equation}\label{blah}
|m_\la(\tau)|\lesssim (\e(\la))^{-1} \bigl(1+(\e(\la))^{-1}|\la-\tau|\bigr)^{-N} \, \, \forall \, N, \, \, 
\text{if } \, \tau\ge 0, \, \, \la\ge 1,
\end{equation}
assuming that, as above, $T=2\delta_0\cdot (\e(\la))^{-1}$.

The local operator $T_\la^0$ was estimated in 
\cite{DKS}  and then later in \cite{BSSY} (see also \cite{sogge88}) where it was
shown that this operator enjoys the bounds in \eqref{3.17} even for the larger range of exponents
where $q>\tfrac{2n}{n-1}$.  One proves this result using stationary phase and Stein's oscillatory integral theorem in
\cite{steinbeijing}.  For this step it is convenient to assume, as we may, that the injectivity radius of $(M,g)$ is ten or more.

Based on this only one estimate in \eqref{3.17} remains.  We just need to handle $T^1_\la$, i.e., if 
$T=2\delta_0\cdot \log(2+\la)$ with $\delta_0$ small enough
$$\la^{-1}
\Bigl\| \, \int_0^\infty (1-\eta(t)) \, \eta(t/T) e^{i\la t} e^{-\e(\la)t} \cos tP\, dt\,
\Bigr\|_{L^{p(q)}(M)\to L^q(M)}=O(1).$$
Since $q\le (p(q))'$ if $(p(q),q)$ are is in \eqref{3.3'} or \eqref{3.16}, by H\"older's inequality, this would follow from
\begin{equation}\label{3.17.1}
\Bigl\|  \int_0^\infty (1-\eta(t))\eta(t/T) e^{i\la t} e^{-\e(\la)t} \cos tP\, dt
\Bigr\|_{L^{r}(M)\to L^{r'}(M)}=O(\la), \,  \text{if } \, 
r'<\tfrac{2n(n+1)}{n^2-n-4},
\end{equation}
assuming that $\delta_0>0$ is small.  Here, we are using the fact that
$(p(q))'<\tfrac{2n(n+1)}{n^2-n-4}$ (see \eqref{3.1}).

One can repeat the proof of \eqref{3.12} to see that
\begin{equation}\label{3.18}
\Bigl\| \, \int_0^\infty (1-\eta(t))\eta(t/T) e^{i\la t} e^{-\e(\la)t} \cos tP\, dt\,
\Bigr\|_{L^{2}(M)\to L^{2}(M)}=O(T)=O(\log(2+\la)).
\end{equation}
Also, by using the Hadamard parametrix and arguing as in \cite{Berard} one can adapt the proof of
\eqref{3.12.1} to see that
\begin{equation}\label{3.19}
\Bigl\| \, \int_0^\infty (1-\eta(t))\eta(t/T) e^{i\la t} e^{-\e(\la)t} \cos tP\, dt\,
\Bigr\|_{L^{1}(M)\to L^{\infty}(M)}=O(\la^{\frac{n-1}2} \, \la^{c_0\delta_0}),\end{equation}
if $\delta_0>0$ is small.  Since
$$\tfrac{2n(n+1)}{n^2-n-4}<\tfrac{2(n-1)}{n-3},$$
we have 
$$\tfrac{n-1}2 \cdot (1-\theta)<1 \quad \text{if } \, 
\theta =2/r' \, \, \text{and } \, \, r'<\tfrac{2(n-1)}{n-3},
$$
we obtain \eqref{3.17} via interpolation if $\delta_0=\delta_0(r')$ is small enough.

This completes our proof of Theorem~\ref{nonpossob} except for the quasimode estimates \eqref{3.20} for
the critical exponent $q=q_c=\tfrac{2(n+1)}{n-1}$, which we shall handle in the next subsection.


\subsection*{Improved quasimode bounds for the critical exponent}

As we noted before, we cannot appeal to Corollary~\ref{corr2.2} to obtain
improved quasimode estimates for the critical exponent $q_c=\tfrac{2(n+1)}{n-1}$ on manifolds of 
nonpositive curvature since we do not have the uniform Sobolev estimates \eqref{3.3}
when $q=q_c$.
Despite this, we can use the above arguments to obtain \eqref{3.20} which extends the critical quasimode estimates of two of us
\cite{SBLog} for the $V\equiv0$ case to include singular potentials when $n\ge3$.  In a later section we shall prove analogous
estimates for the two-dimensional case.

%
%
%

To prove the quasimode estimates in \eqref{3.20}, we shall of course use the fact that by \cite{SBLog} we have \eqref{3.20} when $V\equiv0$ which
is equivalent to the following
\begin{equation}\label{3.22}
\bigl\| \, (-\Delta_g-(\la+i\e(\la))^2)^{-1}\, \bigr\|_{L^2(M)\to L^{q_c}(M)} \lesssim
\la^{\sigma(q_c)-1}\, \bigl(\e(\la)\bigr)^{-1+\delta_n},
\end{equation}
as well as the following bounds for the spectral projection operators associated to $H_0=-\Delta_g$:
\begin{equation}\label{3.23}
\bigl\| \chi_{[\la,\la+\e(\la)] }\bigr\|_{L^2(M)\to L^{q_c}(M)} \lesssim \la^{\sigma(q_c)} \, \bigl(\e(\la)\bigr)^{\delta_n}.
\end{equation}

To proceed, just as before we shall write
\begin{equation}\label{3.27.1}
\bigl(-\Delta_g-(\la+i\e(\la))^2\bigr)^{-1}=T_\la+R_\la, \quad
\text{where } \, T_\la=T_0+T^1_\la,\end{equation}
with $T_\la^0$, $T^1_\la$ and $R_\la$ as in \eqref{3.14}, \eqref{3.15} and \eqref{3.16}, respectively.

Since $R_\la=m_\la(\sqrt{H_0})$ with $m_\la(\tau)$ as in \eqref{blah} one can use
\eqref{3.23} and a simple orthogonality argument to see that
\begin{equation}\label{3.24}
\|R_\la\|_{L^2(M)\to L^{q_c}(M)} \lesssim \bigl(\e(\la)\bigr)^{-1+\delta_n} \,  \la^{\sigma(q_c)-1},
\end{equation}
and also
\begin{equation}\label{3.25}
\bigl\| R_\la \circ (-\Delta_g-(\la+i\e(\la))^2)\bigr\|_{L^2(M)\to L^{q_c}(M)}
\lesssim  \bigl(\e(\la)\bigr)^{-1+\delta_n} \,  \la^{\sigma(q_c)-1} \cdot (\la \,  \e(\la)).
\end{equation}

If we set $T_\la=T^0_\la+T^1_\la$ as above, then since 
$T_\la=(-\Delta_g-(\la+i\e(\la))^2)^{-1}-R_\la$, we trivially obtain from \eqref{3.22} and \eqref{3.24} the bound
\begin{equation}\label{3.26}
\|T_\la\|_{L^2(M)\to L^{q_c}(M)}\lesssim \bigl(\e(\la)\bigr)^{-1+\delta_n} \, \la^{\sigma(q_c)-1}.
\end{equation}

We noted before that
$$\|T^0_\la\|_{L^{p(q_c)}(M)\to L^{q_c}(M)}=O(1) \quad 
\text{if } \, \, \tfrac1{p(q_c)}-\tfrac1{q_c}=\tfrac2n.$$
Additionally, by our earlier argument, if the $\delta_0>0$ used to define $T^1_\la$ is small enough we also have
\begin{equation}\label{T1}\|T^1_\la\|_{L^{p(q_c)}(M)\to L^{q_c}(M)}=O(1),\end{equation}
by H\"older's inequality as $q_c<(p(q_c))'$ and $(p(q_c))'<\tfrac{2(n-1)}{n-3}$.  

If we combine the last two estimates we conclude that
\begin{equation}\label{3.27}
\|T_\la\|_{L^{p(q_c)}(M)\to L^{q_c}(M)}=O(1).
\end{equation}

To use these bounds write
\begin{align}\label{3.28}
u&= \bigl(-\Delta_g-(\la+i\e(\la))^2\bigr)^{-1} \circ (-\Delta_g-(\la+i\e(\la))^2)u
\\
&=T_\la\bigl(-\Delta_g+V-(\la+i\e(\la))^2\bigr)u \, + \, T_\la\bigl( V_{\le N}\cdot u\bigr) \, + T_\la\bigl(V_{>N}\cdot u\bigr) \notag
\\
&\qquad\qquad+R_\la\bigl(-\Delta_g-(\la +i\e(\la))^2\bigr)u \
\notag
\\
&= I +II +III + IV, \notag
\end{align}
with $V_{\le N}$ and $V_{>N}$ as in \eqref{2.13}.

By \eqref{3.26}
\begin{equation}\label{3.29}
\|I\|_{q_c}\lesssim \bigl(\e(\la)\bigr)^{-1+\delta_n} \la^{\sigma(q_c)-1} \|(H_V-(\la+i\e(\la))^2)u\|_2,
\end{equation}
and by \eqref{3.25} we similarly obtain
\begin{multline}\label{3.30}
\|IV\|_{q_c}
\lesssim  \bigl(\e(\la)\bigr)^{-1+\delta_n} \,  \la^{\sigma(q_c)-1} \cdot (\la \,  \e(\la)) \|u\|_2
\\
\lesssim  \bigl(\e(\la)\bigr)^{-1+\delta_n} \,  \la^{\sigma(q_c)-1} \|(H_V-(\la+i\e(\la))^2)u\|_2,
\end{multline}
using the spectral theorem in the last inequality.

If we use \eqref{2.15} along with H\"older's inequality and \eqref{3.27} 
along with the arguments from \S~\ref{abstractsec}, we conclude that we can fix
$N$ large enough so that
\begin{equation}\label{3.31}
\|III\|_{q_c}\le \tfrac12 \, \|u\|_{q_c}.
\end{equation}
Also, \eqref{3.26}  and \eqref{2.14} yield for this fixed $N$
\begin{multline}\label{3.32}
\|II\|_{q_c} \le C_N  \bigl(\e(\la)\bigr)^{-1+\delta_n} \,  \la^{\sigma(q_c)-1} \|u\|_2
\\
\lesssim  \bigl(\e(\la)\bigr)^{-1+\delta_n} \,  \la^{\sigma(q_c)-1} \|(H_V-(\la+i\e(\la))^2)u\|_2,
\end{multline}
using the spectral theorem and the fact that $\e(\la)\cdot \la\ge1$ if $\la\ge 1$.

Combining \eqref{3.29}, \eqref{3.30}, \eqref{3.31} and \eqref{3.32} yields
$$\|u\|_{q_c}\lesssim \bigl(\e(\la)\bigr)^{-1+\delta_n} \,  \la^{\sigma(q_c)-1} \|(H_V-(\la+i\e(\la))^2)u\|_2,$$
and since this is equivalent to \eqref{3.22}, the proof of the quasimode estimates for $q=q_c$ in  Theorem~\ref{nonpossob} is complete.

\newsection{Improved bounds for tori}\label{torussec}

In this section we shall prove Theorem~\ref{torussob}.  Let us start by going over the proof of quasimode and
uniform Sobolev estimates for the unperturbed operator
$H_0=-\Delta_{\Tn}$, which involve the exponent $q=\tfrac{2n}{n-2}$:
\begin{align}\label{4.1}
\|u\|_{L^{\frac{2n}{n-2}}(\Tn)}&\lesssim
\la^{-1/2}(\e(\la))^{-1/2}\|(-\Delta_g-(\la+i\e(\la))^2)
u
\|_{L^2(\Tn)}
\\
\|u\|_{L^{\frac{2n}{n-2}}(\Tn)}&\lesssim 
\| (-\Delta_g-(\la+i\e(\la))^2)
u\|_{L^{\frac{2n}{n+2}}(\Tn)}, \label{4.2}
\end{align}
for $\la\ge1$ with
\begin{equation}\label{4.3}
\e(\la)=\la^{-1/3+\delta_0}, \quad \forall \, \delta_0>0.
\end{equation}
Recall that $\sigma\bigl(\tfrac{2n}{n-2}\bigr)=\tfrac12$, and so \eqref{4.1}
corresponds to \eqref{2.2} for $q=\tfrac{2n}{n-2}$ with the
optimal
$\delta(\la,q)=\sqrt{\e(\la)}$.

Even though these estimates are in \cite{BSSY} for 
$n=3$ and Hickman~\cite{Hickman} for other dimensions,
let us start by reviewing their proofs, since, 
as in the preceding section, we shall need to modify
them to handle the estimates for $H_V$, especially the ones involving exponents $q$ for which appropriate
uniform Sobolev estimates are unavailable, which includes
the case where $q=q_c$.

The main estimate that is used to prove these two inequalities is a discrete version of the Stein-Tomas restriction theorem:
\begin{multline}\label{4.4}
\|\chi_{[\la,\la+\rho]}f\|_{L^{q_c}(\Tn)}
\lesssim (\rho\la)^{1/q_c} \, \la^{\e_0} \, \|f\|_{L^2(\Tn)}, \, \, \forall \, \e_0>0, \\ \text{if } \,
\, \, \, q_c=\tfrac{2(n+1)}{n-1} \, \, \text{and }
\, \la^{-1}\le \rho\le 1.
\end{multline}
Here, $\chi_I$ denotes the spectral projection operator
associated with the interval $I$ for $H_0$.
Since $\sigma(q_c)=1/q_c$, this represents a substantial improvement
over the unit band ($\rho=1$) spectral projection estimates
of one of us \cite{sogge88}.  On the other hand, unlike
\eqref{4.1}, it does not involve $\delta(\rho)=\sqrt{\rho}$.  Indeed, no such estimate can be valid for $\rho$ close to
the associated wavelength $\la^{-1}$.

Hickman~\cite{Hickman} proved \eqref{4.4} using the
decoupling estimates of Bourgain and Demeter~\cite{BourgainDemeterDecouple}.  Specifically,
Hickman showed that \eqref{4.4} is a consequence of Theorem 2.2 in
\cite{BourgainDemeterDecouple}.  Before that,
Bourgain, Shao, Yao and one of us \cite{BSSY} obtained
a somewhat weaker form of \eqref{4.4} when $n=3$ in which it was required that $\la^{-1/3}\le \rho\le1$.  This paper preceded the decoupling estimates of Bourgain and Demeter, and instead relied on multilinear techniques
of Bourgain and Guth~\cite{BourgainGuth}.

We shall require an equivalent form of \eqref{4.4}:
\begin{multline}\label{4.4'}\tag{4.4$'$}
\bigr\|\, m_{\la,\rho}(\sqrt{H_0})f\, \bigr\|_{L^{q_c}(\Tn)}
\lesssim \|m_{\la,\rho}\|_\infty \cdot (\rho\la)^{2/q_c}\la^{\e_0}\, \|f\|_{L^{q_c'}(\Tn)} \, \, \forall \, \e_0>0, 
\\ \,  
\text{if } \, \text{supp }m_{\la,\rho}\subset [\la,\la
+\rho]
\, \, \text{and } \,
 \, \la^{-1}\le \rho\le 1.
\end{multline}
After observing that \eqref{4.4} and orthogonality implies that
$\|m_{\la,\rho}\|_{L^2(\Tn)\to L^{q_c}(\Tn)}
=O\bigl((\rho\la)^{1/q_c}\la^{\e_0}\bigr)$ for all
$\e_0>0$, one obtains \eqref{4.4'} from this and 
a standard $TT^*$ argument.

Let us now briefly recall the proof of \eqref{4.1}.
As we mentioned earlier, it is equivalent to the
statement that
\begin{equation}\label{4.1'}\tag{4.1$'$}
\|\chi_{[\la, \la+\e(\la)]}\|_{L^2(\Tn)\to L^{\frac{2n}{n-2}}(\Tn)} \lesssim \sqrt{\e(\la)} \, \la^{\frac12}, 
\quad \e(\la)=\la^{-1/3+\delta_0}, \, \, \,
\forall \, \delta_0>0.
\end{equation}
If $a_0\in {\mathcal S}(\R)$ satisfies
\begin{equation}\label{4.5}
a_0(0)=1 \quad \text{and } \, \, \text{supp }
\Hat a_0\subset (-1/2,1/2),
\end{equation}
then \eqref{4.1'} is equivalent to the statement 
that $a_0\bigl((\e(\la))^{-1}(\la-P)\bigr)$, $P=
\sqrt{H_0}$, maps $L^2(\Tn)$ to $L^{\frac{2n}{n-2}}(\Tn)$
with norm $O(\sqrt{\e(\la)}\la^{\frac12})$, and by a simple
$TT^*$ argument this in turn is equivalent to the 
statement that
\begin{equation}\label{4.1''}\tag{4.1$''$}
\bigl\| a\bigl((\e(\la))^{-1}(\la-P)\bigr)f\bigr\|_{L^{\frac{2n}{n-2}}(\Tn)}\lesssim \e(\la)\, \la \,
\|f\|_{L^{\frac{2n}{n+2}}(\Tn)}, \quad \text{with } \, a(\tau)
=\bigl(a_0(\tau)\bigr)^2.
\end{equation}
Next we note that
\begin{multline*}a\bigl((\e(\la))^{-1}(\la-P)\bigr)f
=\frac{\e(\la)}{2\pi}
\int \Hat a\bigl(\e(\la)t\bigr) \, e^{i\la t}
e^{-itP}f \, dt
\\
= \frac{\e(\la)}{\pi}\int \Hat a\bigl(\e(\la)t\bigr) \, e^{i\la t}
(\cos tP)f \, dt + a\bigl((\e(\la))^{-1}(\la+P)\bigr)f.
\end{multline*}
Since $(\e(\la))^{-1}, \, \la\ge1$ and $P$ is a positive operator, it is
a simple matter to use either Sobolev estimates
or spectral projection estimates from \cite{sogge88} to see that the operator in the last term in the right maps
$L^2(\Tn)$ to $L^{\frac{2n}{n-2}}(\Tn)$ with norm
$O(\la^{-N})$ for any $N$.  Thus we would have
\eqref{4.1''}, and consequently \eqref{4.1}, if we
could show that
\begin{multline}\label{4.6}
\| Tf\|_{L^{\frac{2n}{n-2}}(\Tn)}
\lesssim \la \, \|f\|_{L^{\frac{2n}{n+2}}(\Tn)}, \quad
\e(\la)=\la^{-\frac13+\delta_0},
\\
\text{where } Tf=\int \Hat a\bigl(\e(\la)t\bigr) \, e^{i\la t}
(\cos tP)f \, dt.
\end{multline}

Note that, by \eqref{4.5}, the integrand vanishes
when $|t|>2(\e(\la))^{-1}$.  To exploit this,
let us fix a Littlewood-Paley bump function
$\beta\in C^\infty_0((1/2,2))$ satisfying
\begin{equation}\label{4.7}
\sum_{j=-\infty}^\infty \beta(2^{-j}t)\equiv 1, \, \, 
t>0,
\end{equation}
and set
\begin{equation}\label{4.8}
\beta_0(t)=1-\sum_{j=1}^\infty \beta(2^{-j}|t|)
\in C^\infty_0(\R).
\end{equation}
Using these we can split the operator in \eqref{4.6}
as
\begin{equation}\label{4.9}
Tf=\sum_{j=0}^\infty T_jf,
\end{equation}
where
\begin{multline}\label{4.10}
T_0f= \int \beta_0(t) \, \Hat a\bigl(\e(\la)t\bigr) \, e^{i\la t}
(\cos tP)f \, dt \\
\text{and } \, 
T_jf= \int \beta(2^{-j}|t|) \,  \Hat a\bigl(\e(\la)t\bigr) \, e^{i\la t}
(\cos tP)f \, dt, \, \, j=1,2,\dots.
\end{multline}

Clearly then, \eqref{4.6} would be a consequence of the
following
\begin{equation}\label{4.11}
\|T_jf\|_{L^{\frac{2n}{n-2}}(\Tn)}\lesssim 
2^{-\delta j}\la \|f\|_{L^{\frac{2n}{n+2}}(\Tn)}, \, \,
\, j=0,1,2,\dots,
\end{equation}
for some $\delta>0$ which depends on $n$ and the
$\delta_0>0$ in \eqref{4.3}.

The bound for $j=0$ is a simple consequence of the
spectral projection estimates of one of us
\cite{sogge88}.  It is simple to check that the remaining bounds follow, by
interpolation from the following two estimates:
\begin{equation}\label{4.12}
\|T_jf\|_{L^{\frac{2(n+1)}{n-1}}(\Tn)}
\lesssim 
 \la^{\e_0}\la^{\frac{n-1}{n+1}} 2^{\frac{2}{n+1}j}
\|f\|_{L^{\frac{2(n+1)}{n+3}}(\Tn)}, \quad \forall 
\, \e_0>0,
\end{equation}
and
\begin{equation}\label{4.13}
\|T_jf\|_{L^{\infty}(\Tn)}
\lesssim \la^{\frac{n-1}2} 2^{\frac{n+1}2j}
\|f\|_{L^{1}(\Tn)}.
\end{equation}
Indeed, since $\tfrac{n-2}{2n}=\theta \tfrac{n-1}{2(n+1)}
+(1-\theta)\tfrac1\infty$, with $\theta = 
\tfrac{(n+1)(n-2)}{n(n-1)}$, by interpolation \eqref{4.12} and \eqref{4.13} yield for all $\e_0>0$
\begin{equation}\label{4.13''}
\|T_j\|_{L^{\frac{2n}{n+2}}(\Tn)\to L^{\frac{2n}{n-2}}
(\Tn)}\lesssim \la^{1+\e_0}
\la^{-\frac1n} 2^{\frac{3j}n},
\end{equation}
which implies \eqref{4.11}, since by \eqref{4.3} and \eqref{4.5},
$T_j=0$ for $2^j$ larger than a fixed constant times
$\la^{-\frac13+\delta_0}$.  So, given any fixed
$\delta_0$ as in \eqref{4.3}, we obtain \eqref{4.11} 
with $\delta= \delta_0/n$ if
the loss 
$\e_0>0$ here is small enough.

To finish our proof of \eqref{4.1} it remains to prove
\eqref{4.12} and \eqref{4.13}.

The first inequality follows from applying \eqref{4.4'}
with $\rho=2^{-j}$ since
$$\int \beta(2^{-j}|t|) \,  \Hat a\bigl(\e(\la)t\bigr) \, e^{i\la t}
\cos (t\tau) \, dt =O\bigl(2^j(1+2^j|\la-\tau|)^{-N}),
\quad
\forall \, N \, \,  \text{if }  \, \la\ge1 \, 
\, \text{ and } \, \,  \tau\ge 0.$$
Note that the integral in the left vanishes if $2^j$ is larger than a fixed
multiple of $(\e(\la))^{-1}$.

The remaining inequality, \eqref{4.13}, amounts to
showing that the kernel $K_j(x,y)$ of $T_j$ satisfies
\begin{equation}\label{4.13'}\tag{4.13$'$}
K_j(x,y)=O(\la^{\frac{n-1}2} 2^{\frac{n+1}2 j}).
\end{equation}
If we relate $\Tn$ to $(-\pi,\pi]^n$ and 
the wave kernel $\cos tP$ on $\Tn$ to the Euclidean one (see, e.g., \cite[\S 3.5]{SoggeHangzhou}),
we can write this kernel as follows
\begin{equation}\label{4.14}
K_j(x,y)=(2\pi)^{-n}
\sum_{\ell \in {\mathbb Z}^n}
\int_{-\infty}^\infty
\beta(2^{-j}|t|) \,  \Hat a\bigl(\e(\la)t\bigr) \, e^{i\la t}
(\cos t\sqrt{-\Delta_{\Rn}})(x,y+\ell) \, dt,
\end{equation}
with $(\cos t\sqrt{-\Delta_{\Rn}})(x,y+\ell)$ denoting
the wave kernel in $\Rn$.  If we call the $\ell$-th
summand above $K_{j,\ell}(x,y)$ then by using
stationary phase and arguing as
in \cite{BSSY} or \cite[\S 3.5]{SoggeHangzhou} shows
that 
\begin{equation}\label{4.15}
|K_{j,\ell}(x,y)|\lesssim \la^{\frac{n-1}2}
(1+|x-y-\ell|)^{-\frac{n-1}2}
\lesssim \la^{\frac{n-1}2} (1+|\ell|)^{-\frac{n-1}2},
\quad x,y\in (-\pi,\pi]^n.
\end{equation}
Furthermore, by Huygens' principle $K_{j,\ell}(x,y)=0$
when $x,y\in (-\pi,\pi]^n$ and $|\ell|$ is larger than
a fixed multiple of $2^j$.  Therefore, for such $x,y$ we have
\begin{equation}\label{4.15'}
|K_{j}(x,y)|\lesssim \la^{\frac{n-1}2} \sum_{\{\ell \in {\mathbb Z}^n: \, 
|\ell|\le 2^j \}} (1+|\ell|)^{-\frac{n-1}2}
\lesssim \la^{\frac{n-1}2} 2^{\frac{n+1}2j},
\end{equation}
as desired.

Let us now see how we can use this argument to prove
the uniform Sobolev estimates \eqref{4.2}. 
As was the case in \S~\ref{nonpossec}, we shall make use of 
the splitting of the resolvent operator
$(-\Delta_{\Tn}-(\la+i\e(\la))^2)^{-1}$ as in
\eqref{3.13}--\eqref{3.17}, where $\e(\la)$ now as in \eqref{4.3}.  In our setting, we may simplify things
a bit compared to the argument in \S~\ref{nonpossec}
by taking $T=(\e(\la))^{-1}$, with, as we said,
$\e(\la)$ is now as in \eqref{4.3}.
We then would obtain \eqref{4.2}
if we had \eqref{3.17} in the current setting.

The bounds there for $R_\la$ follow from a simple
orthogonality argument and \eqref{4.1'}.  Also,
just as before the bounds in \eqref{3.17}
for the local operator are known (see \cite{DKS}, 
\cite{BSSY}).

To prove the bounds for the remaining operator
$T^1_\la$ in \eqref{3.17}, we split up the integral
dyadically as before by
writing
$$T^{1}_\la = T^{1,0}_\la +\sum_{j=1}^\infty T^{1,j}_\la,
$$
where for $j=1,2,\dots$
\begin{equation}\label{4.16}
T^{1,j}_\la = \frac{i}{\la+i\e(\la)}\int_0^\infty 
\beta(2^{-j}t) \,
(1-\eta(t)) \, \eta(t/T) \, e^{i\la t}e^{-\e(\la)t}\, \cos tP \, dt,
\end{equation}
and $T^{1,0}_\la$ is given by an analogous
formula with $\beta(2^{-j}t)$ replaced by
$\beta_0(t)\in C^\infty_0(\Rn)$.  Since
$T^{1,0}_\la$ is a local operator which shares
the same properties as $T^0_\la$, we have the analog
of \eqref{3.17} with $S_\la=T^{1,0}_\la$.  As a result,
we would have the remaining inequality, \eqref{3.17}
with $S_\la=T^1_\la$ if we could show that
when \eqref{4.3} is valid we have, as before
for some $\delta>0$ depending on $\delta_0$ and $n$
\begin{equation}\label{4.17}
\|T^{1,j}_\la\|_{L^{\frac{2n}{n+2}}(\Tn)
\to L^{\frac{2n}{n-2}}(\Tn)}
\lesssim 2^{-\delta j}.
\end{equation}

Since $T^{1,j}_\la=0$ when $2^j$ is larger than a fixed
multiple of $(\e(\la))^{-1}$ by the proof of \eqref{4.1}
we would obtain this estimate via interpolation via
the following two estimates:
\begin{equation}\label{4.18}
\|T_\la^{1,j}f\|_{L^{\frac{2(n+1)}{n-1}}(\Tn)}
\lesssim 
\la^{-1}\cdot \la^{\e_0}\la^{\frac{n-1}{n+1}} 2^{\frac{2}{n+1}j}
\|f\|_{L^{\frac{2(n+1)}{n+3}}(\Tn)}, \quad \forall 
\, \e_0>0,
\end{equation}
and
\begin{equation}\label{4.19}
\|T_\la^{1,j}f\|_{L^{\infty}(\Tn)}
\lesssim \la^{-1}\cdot\la^{\frac{n-1}2} 2^{\frac{n+1}2j}
\|f\|_{L^{1}(\Tn)}.
\end{equation}
Due to the $(\la+i\e(\la))^{-1}$ factor in 
\eqref{4.16} one sees from this formula and \eqref{4.10}
that $T^{1,j}_\la$ behaves like $\la^{-1} T_j$ and so
it is clear that the proof of \eqref{4.12} and 
\eqref{4.13} yield \eqref{4.18} and \eqref{4.19}, respectively.  This finishes our proofs of 
\eqref{4.1} and \eqref{4.2}.

\medskip

Using \eqref{4.1} and \eqref{4.2} along with Corollary~\ref{corr2.2} we obtain the bounds in
Theorem~\ref{torussob} involving $q=\tfrac{2n}{n-2}$.


\subsection*{Quasimode and uniform Sobolev estimates for the critical exponent }

Suppose that 
$
q_c=\frac{2(n+1)}{n-1}, \frac{1}{p(q_c)}-\frac{1}{q_c}=\tfrac2n
$, and, as in Theorem~\ref{torussob}, let us assume that for an arbitrary  fixed $\delta_0>0$
\begin{equation}\label{4.20}
\e(\la)=\la^{-\frac15+\delta_0},
\,\,\text{if}\,\,n\ge 4,\,\,\text{and}\,\,\,  \e(\la)=\la^{-\frac{3}{16}+\delta_0},\,\,\text{if}\,\,n=3.
\end{equation}
 We then recall that the estimates in Theorem~\ref{torussob} for $q=q_c$ say that for $u\in\text{Dom}(H_V)$ we have
\begin{equation}\label{4.21}
\|u\|_{L^{q_c}(\Tn)}
\lesssim \la^{\e_0}
(\e(\la))^{-\frac{n+3}{2(n+1)}}
\la^{-\frac{n+3}{2(n+1)}} 
\|(H_V-(\la + i\e(\la))^2)u\|_{L^2(\Tn)},
\, \, \la \ge 1,
\end{equation}
as well as 
\begin{equation}\label{4.22}
\|u\|_{L^{q_c}(\Tn)}
\lesssim 
\|(H_V-(\la + i\e(\la))^2)u\|_{L^{p(q_c)}(\Tn)}, \, \, \la \ge 1.
\end{equation}

As noted before, the inequality \eqref{4.21} is equivalent
to the spectral projection estimates
\begin{multline}\label{4.22'}\tag{4.23$'$}
\|\chi^V_{[\la,\la+\rho]}f\|_{L^{q_c}(\Tn)}\lesssim
(\rho\la)^{1/q_c} \, \la^{\e_0} \, \|f\|_{L^2(\Tn)}, \\
 \forall \, \delta_0>0
\, \,\, \rho\in [\la^{-\frac{1}{5}+\delta_0}, 1]\,\,\,\text{if}\,\,\, n\ge 4\,\,\,\text{or}\,\,\, \rho\in [\la^{-\frac{3}{16}+\delta_0}, 1]\,\,\,\text{if}\,\,\, n=3.
\end{multline}
This is weaker than the $V\equiv0$ results of
Hickman~\cite{Hickman}, i.e., \eqref{4.4}.  Even though the
$\rho$-intervals  in \eqref{4.22'} do not shrink to
$\{1\}$ as  $n\to \infty$, it would be interesting to try
to improve the range of $\rho$ in this inequality.

Since it is straightforward to check that for $\e(\la)$ satisfying \eqref{4.20}, \eqref{2.5'} is valid, by Corollary~\ref{corr2.2}, we would have
\eqref{4.21} and \eqref{4.22} if we knew that
for such $\e(\la)$,
we had the quasimode estimates
\begin{equation}\label{4.23}
\|u\|_{q_c}\lesssim \la^{\e_0}
(\e(\la))^{-\frac{n+3}{2(n+1)}}
\la^{-\frac{n+3}{2(n+1)}} 
\|(-\Delta_g-(\la+i\e(\la))^2u\|_2,\,\,
\text{for }\,\, \la\ge 1,
\end{equation}
and 
\begin{equation}\label{4.23'}
\|u\|_{p(q_c)^\prime}\lesssim 
(\e(\la))^{-1}
\la^{\sigma(p(q_c)^\prime)-1} 
\|(-\Delta_g-(\la+i\e(\la))^2u\|_2,\,\,
\text{for }\,\, \la\ge 1,
\end{equation}
as well as
\begin{equation}\label{4.24}
\|u\|_{q_c}\lesssim \|(-\Delta_g-(\la+i\e(\la))^2)u\|_{p(q_c)},
\quad \la\ge1.
\end{equation}

Inequality \eqref{4.23} follows from Hickman's estimate \eqref{4.4} and a simple orthogonality argument. 
And \eqref{4.23'} follows from the same argument by using the general spectral projection estimates of the fourth author \cite{sogge88}.
As we shall see at the end of this section, we can get a better bound than the right side of  \eqref{4.23'} for $q=p(q_c)^\prime$, but 
here as long as \eqref{2.5'} is valid for $\e(\la)$ satisfying \eqref{4.20}, the powers of $\e(\la)$ in inequalities \eqref{4.23} and \eqref{4.23'}, which are numbers between $[-1, -\frac12]$, are not crucial in the proof of \eqref{4.22}.

Now let us see how we can modify the proof of 
\eqref{4.2} to obtain \eqref{4.24}.
We shall make use of 
the splitting of the resolvent operator
$(-\Delta_{\Tn}-(\la+i\e(\la))^2)^{-1}$ as in
\eqref{3.13}--\eqref{3.17}, where $\e(\la)$ now as in \eqref{4.20}.
We then would obtain \eqref{4.24}
if we had \eqref{3.17} in the current setting.

Unlike previous cases, we do not have sharp spectral projection bounds here for the exponent $q_c$. The operator $R_\la$ will be dealt with differently after we established the desired bounds for $T_\la$.

As we noted earlier the local operator $T^0_\la$ always
satisfies the desired bounds in the uniform Sobolev
estimates regardless of the choice of $\e(\la)$:
$$\|T^0_\la\|_{L^{p(q_c)}(\Tn)\to L^{q_c}(\Tn)}
=O(1) \quad \text{if } \, \,
\tfrac1{p(q_c)}-\tfrac1{q_c}=\tfrac2n,
\, \, \, \text{i.e.,} \quad  p(q_c)=\tfrac{2n(n+1)}
{n^2+3n+4}.$$

For the operator $T_\la^1$, just as in the proof of \eqref{4.2}, we
shall need to use the dyadic decomposition
$$T^1_\la =T^{1,0}_\la + \sum_{j=1}^\infty T^{1,j}_\la$$
exactly as before where for $j=1,2,3,\dots$ $T^{1,j}$
is given by \eqref{4.16} and for $j=0$ the analog of this identity with $\beta(2^{-j}t)$ replaced by
$\beta_0(t)\in C^\infty_0(\R)$.  Since the factor
$(1-\eta(t))$ in each of these integrals vanishes
near the origin, the quasimode estimates in \cite{sogge88} imply that
$\|T_\la^{1,0}\|_{L^{p(q_c)}(\Tn)\to L^{q_c}(\Tn)}=O(1)$,
or, alternately one can use the fact that $T_\la^{1,0}$ behaves like $T_\la^0$ and deduce this from arguments in
\cite{DKS}, \cite{BSSY} or \cite{sogge88}.  Based
on the desired bounds for $j=0$, we conclude that
if we could show that for some $\delta>0$
\begin{equation}\label{4.25}
\|T^{1,j}_\la \|_{L^{p(q_c)}(\Tn)\to L^{q_c}(\Tn)}
=O(2^{-j\delta}) \quad \text{if } \, \,
\tfrac1{p(q_c)}-\tfrac1{q_c}=\tfrac2n, \, \, \, j=1,2,3,\dots,
\end{equation}
then we would obtain $\|T_\la\|_{L^{p(q_c)}(\Tn)\to L^{q_c}(\Tn)}=O(1)$.  As before, $\delta$ here depends on the various parameters in \eqref{4.20}, and, in
order to get the bounds in \eqref{4.25} we are
lead to assume that $\e(\la)$ as
in \eqref{4.20}.

In order to prove \eqref{4.25} we claim that, by
interpolation, it suffices to prove the following
three inequalities
\begin{equation}\label{4.26}
\|T^{1,j}_\la\|_{L^{p}(\Tn)\to L^q(\Tn)}\lesssim 
\la^{\e_0}\la^{-\frac{1}{n}} 2^{\frac{3j}n}, \quad \forall \, \e_0>0,
\quad \text{if } \, \,  q= \tfrac{2n}
{n-2},
\end{equation}
\begin{multline}\label{4.26'}
\|T^{1,j}_\la\|_{L^{p}(\Tn)\to L^q(\Tn)}\lesssim \la^{\e_0}
2^j,\\
\quad \forall \, \e_0>0, \,\,\text{if } \, \,  q= \tfrac{2n}
{n-3},\,\,\text{for}\,\,n\ge 4, \,\,\text{or}\,\,\, q=\infty\,\,\text{for}\,\,n=3,
\end{multline}
and
\begin{equation}\label{4.27}
\|T^{1,j}_\la\|_{L^p(\Tn)\to L^{q}(\Tn)}
\lesssim \la^{\e_0} \la^{-\frac{1}{n}}
2^{\frac{n^2+2n-2}{n^2}j}, \quad \forall \, \e_0>0,\,\,
 \text{if } \, \, 
q= \tfrac{2n^2}
{(n-1)(n-2)}.
\end{equation}
where $\frac1p-\frac1q=\frac2n$.

To verify this claim we note that if $p=q_c^\prime$ and
$q=p(q_c)^\prime=\tfrac{2n(n+1)}{n^2-n-4}$, when $n\geq 4$, we have
$$\tfrac1{q} =\theta \cdot \tfrac {n-2}{2n}
+(1-\theta) \cdot \tfrac{(n-1)(n-2)}{2n^2}, \quad \text{if } \, \, \theta = \tfrac{n^2-3n-2}{(n+1)(n-2)}.$$
Consequently, by interpolation, \eqref{4.26} and
\eqref{4.27} yield for any $\e_0>0$
\begin{equation}\label{4.28}
\begin{aligned}
\|T^{1,j}_\la \|_{L^p(\Tn)\to L^q(\Tn)}
&\lesssim \la^{\e_0} 
\bigl(\la^{-\frac{1}{n}} 2^{\frac{3j}n}\bigr)^{\frac{n^2-3n-2}{(n+1)(n-2)}}\cdot
\bigl(\la^{-\frac{1}{n}}
2^{\frac{n^2+2n-2}{n^2}j}\bigr)^{\frac{2n}{(n+1)(n-2)}}
\\
&=\la^{\e_0} \la^{-\frac1{n}} \cdot
2^{\frac{5}{n}j}.
\end{aligned}
\end{equation}

When $n=3$, the above argument does not work since $\tfrac{n^2-3n-2}{(n+1)(n-2)}<0$ if $n=3$. Instead, we shall use interpolation between \eqref{4.26'} and \eqref{4.27}. More precisely, note that if $p=q_c^\prime$ and
$q=p(q_c)^\prime=12$, we have
$$\tfrac1q=\tfrac1{12} =\theta \cdot \tfrac 19
+(1-\theta) \cdot \tfrac1\infty, \quad \text{if } \, \, \theta = \tfrac34.$$

By interpolation, \eqref{4.26'} and
\eqref{4.27} yield for any $\e_0>0$
\begin{equation}\label{4.29}
\begin{aligned}
\|T^{1,j}_\la \|_{L^p(\Tn)\to L^q(\Tn)}
&\lesssim \la^{\e_0} 
\bigl(\la^{-\frac{1}{3}} 2^{\frac{13}{9}j}\bigr)^{\frac34}\cdot
\bigl(
2^{j}\bigr)^{\frac14}
\\
&=\la^{\e_0} \la^{-\frac1{4}} \cdot
2^{\frac{4}{3}j}.
\end{aligned}
\end{equation}

By duality, \eqref{4.28} and \eqref{4.29} leads to \eqref{4.25} if we fix $\delta_0>0$ in
\eqref{4.20} and choose $\e_0$ here to be sufficiently small since $T^{1,j}_\la=0$ if $2^j$ is larger than
a fixed constant times $(\e(\la))^{-1}$, which, satisfies \eqref{4.20}.

Now we shall give the proof of \eqref{4.26}-\eqref{4.27}.
The first inequality, \eqref{4.26}, follows from \eqref{4.13''}, since, as noted before, the operator $T_\la^{1,j}$ behaves like $\la^{-1}T_j$.

To prove the second inequality, first note that, if $n\ge 4$, by Theorem 2.7 in \cite{BourgainDemeterDecouple} and a simple orthogonality argument, we have
\begin{equation}\label{4.30}
\|\chi_{[\la,\la+\rho]}\|_{L^2(\Tn)\to L^q(\Tn)}
\lesssim \rho^{\frac12}\la^{\e_0}\la^{\sigma(q)}, \quad \forall \, \e_0>0,\,\,\rho\in [\la^{-1},1]
\,\,\,\text{if}\,\,\,q=\tfrac{2(n-1)}{n-3}
\end{equation}


As a consequence of \eqref{4.30}, we have 
\begin{equation}\label{4.30'}
\|T^{1,j}_\la \|_{L^2(\Tn)\to L^q(\Tn)}
\lesssim \la^{\e_0}\la^{\sigma(q)-1}2^{j/2},\quad \forall \, \e_0>0\,\,\,\text{if}\,\,\,q=\tfrac{2(n-1)}{n-3}
\end{equation}
by an orthogonality argument, since, as noted before
$T^{1,j}_\la =\la^{-1} m_{\la,j}(\sqrt{H_0})$ where
$m_{\la,j}(\tau) = O(2^j(1+2^j|\tau-\la|)^{-N})$ for any
$N$ if $\tau\ge 0$ and $\la\ge1$.

Inequality \eqref{4.26'} now just follows
from \eqref{4.30'} and \eqref{4.19} via interpolation.
Indeed, since
$$\tfrac{n-3}{2n} =\theta \cdot \tfrac{n-3}{2(n-1)}
+(1-\theta) \cdot \tfrac1\infty, \quad 
$$ 
$$\tfrac{n+1}{2n} =\theta \cdot \tfrac{1}{2}
+(1-\theta) \cdot 1,
$$
with $\theta=\tfrac{n-1}{n}$,
we deduce that for all $\e_0>0$  and $\theta$
as above we have
\begin{align*}
\|T^{1,j}_\la\|_{L^{\frac{2n}{n+1}}(\Tn)\to L^{\frac{2n}{n-3}}(\Tn)}
&\lesssim \la^{\e_0}
\bigl(\la^{-\frac{n-3}{2(n-1)}} 2^{j/2}\bigr)^{\theta}\cdot 
\bigl(
\la^{\frac{n-3}2} 2^{\frac{n+1}2j}
\bigr)^{1-\theta}
\\
&=\la^{\e_0} \,2^{j},
\end{align*}
as desired.

The $n=3$ case in \eqref{4.26'} follows from exactly the same argument by using the fact that
\begin{equation}\label{4.31}
\|\chi_{[\la,\la+\rho]}\|_{L^2(\mathbb{T}^3)\to L^\infty(\mathbb{T}^3)}
\lesssim \rho^{\frac12}\la^{\e_0+1}, \quad \forall \, \e_0>0,\,\,\rho\in [\la^{-1},1].
\end{equation}
If we take $\rho=\la^{-1}$ in the above inequality, \eqref{4.31} is equivalent to counting the lattice points on a sphere, which has a general upper bound in any dimensions, i.e., 
\begin{equation}\label{4.31'}
\|\chi_{[\la,\la+\la^{-1}]}\|_{L^2(\mathbb{T}^n)\to L^\infty(\mathbb{T}^n)}
\lesssim \la^{\frac{n-2}{2}+\e_0}, \quad \forall \, \e_0>0,\,\,n\ge 2.
\end{equation}
See e.g., \cite{bourgain2012restriction} for a more detailed discussion about inequality \eqref{4.31'}. \eqref{4.31} now follows from \eqref{4.31'} by a simple orthogonality argument.

The third inequality, \eqref{4.27}, involves the pair of exponents $(p,q)$ which is the intersection of Stein-Tomas restriction line where $q=\frac{n+1}{n-1}p^\prime$ and the uniform Sobolev line where $\frac1p-\frac1q=\frac2n$. More precisely, note that by \eqref{4.4}, after using the same argument as in the proof of \eqref{4.30'}, we have
\begin{equation}\label{4.32}
\|T^{1,j}_\la \|_{L^2(\Tn)\to L^{q_c}(\Tn)}
\lesssim \la^{\e_0}\la^{-1}2^j(\la2^{-j})^{1/q_c},\quad \forall \, \e_0>0\,\,\,\text{if}\,\,\,q_c=\tfrac{2(n+1)}{n-1}.
\end{equation}
Now \eqref{4.27} follows
from \eqref{4.32} and \eqref{4.19} via interpolation.
Indeed, since
$$\tfrac{(n-1)(n-2)}{2n^2} =\theta \cdot \tfrac1{q_c}
+(1-\theta) \cdot \tfrac1\infty,
$$
$$\tfrac{n^2+n+2}{2n^2} =\theta \cdot \tfrac12
+(1-\theta) \cdot 1,
$$
with $\theta=\tfrac{(n+1)(n-2)}{n^2}$, and $\tfrac{n^2+n+2}{2n^2}=\tfrac{(n-1)(n-2)}{2n^2}+\tfrac2n$, 
we deduce that for all $\e_0>0$  and $\theta$
as above we have
\begin{align*}
\|T^{1,j}_\la\|_{L^{p(q_c)}(\Tn)\to L^{(p(q_c))'}(\Tn)}
&\lesssim \la^{\e_0}
\bigl(\la^{-\frac{n+3}{2(n+1)}} 2^{\frac{n+3}{2(n+1)}j}\bigr)^{\theta}\cdot 
\bigl(
\la^{\frac{n-3}2} 2^{\frac{n+1}2j}
\bigr)^{1-\theta}
\\
&=\la^{\e_0} \, \la^{-\frac1{n}}2^{\frac{n^2+2n-2}{n^2}j},
\end{align*}
as desired.

For the remaining operator $R_\la$, we claim that it has the same mapping properties as the operator $T_\la^{1,j}$ where $2^j\approx \e(\la)^{-1}$. Recall that in proving \eqref{4.26}, \eqref{4.26'} and \eqref{4.27}, the only properties we required for the operator $T_\la^{1,j}$ are
\begin{equation}\label{4.33}
|T_\la^{1,j}(\tau)|=O\big(2^j(1+2^j|\tau-\la|)^{-N}\big),
\end{equation}
and
\begin{equation}\label{4.34}
|T_\la^{1,j}(x,y)|=O(\la^{\frac{n-3}{2}}2^{\frac{n+1}{2}j})
\end{equation}
Similarly for the operator $R_\la$, if $2^j\approx \e(\la)^{-1}$, by \eqref{blah} we have 
\begin{equation}\label{4.35}
|R_\la(\tau)|=O\big(2^j(1+2^j|\tau-\la|)^{-N}\big).
\end{equation}
And for the other kernel bounds \eqref{4.34}, if we use the dyadic decomposition
$R_\la=\sum_{k=0}^{\infty} R_\la^k$,
where 
\begin{equation}\label{4.36}
R_\la^k = \frac{i}{\la+i\e(\la)}\int_0^\infty \beta\big({2^{-k+1}\e(\la)t\big)}\big(1- \eta(\e(\la))t\big) \, e^{i\la t}e^{-\e(\la)t}\, \cos tP \, dt,
\end{equation}
and argue as in \eqref{4.14}-\eqref{4.15} using stationary phase, we have 
$$|R_\la^k(x,y)|\lesssim \la^{\frac{n-3}{2}}2^{\frac{n+1}{2}k}(\e(\la))^{\frac{n+1}{2}} e^{-2^k}.
$$
After summing over $k$, we conclude that
\begin{equation}\label{4.37}
|R_\la(x,y)|=O(\la^{\frac{n-3}{2}}(\e(\la))^{\frac{n+1}{2}}).
\end{equation}
As a consequence of \eqref{4.35} and \eqref{4.37}, by using the same argument as for the operator $T_\la^{1,j}$, we obtain from this that 
\begin{equation}\label{4.38}
\|R_\la \|_{L^{p(q_c)}(\Tn)\to L^{q_c}(\Tn)}
=O(1),
\end{equation}
which completes the proof of \eqref{4.24}.

\subsection*{Quasimode and uniform Sobolev estimates for general exponents }
Now we will see how we can modify the above argument to show that \eqref{1.15} and \eqref{1.15'} hold for general exponents $q$. 
We shall first give the proof of \eqref{1.15}, since essentially it does not require sharp spectral projection bounds. To see this, by Corollary~\ref{corr2.2},
we would have \eqref{1.15} if we knew that for $(p,q)$ exponents satisfying \eqref{1.17}, 
we had the quasimode estimates
\begin{equation}\label{4.39}
\|u\|_r\lesssim \la^{\sigma(r)-1}\, \bigl(\e(\la)\bigr)^{-1}
\|(-\Delta_g-(\la+i\e(\la))^2u\|_2, \, \,
\text{for } \, r=q,p', \, \, \text{and } \, \la\ge 1,
\end{equation}
as well as
\begin{equation}\label{4.40}
\|u\|_q\lesssim \|(-\Delta_g-(\la+i\e(\la))^2)u\|_p,
\quad \la\ge1,
\end{equation}
where
\begin{equation}\label{4.41}
\e(\la)=\begin{cases}\la^{-\beta_1(n, p(q)^\prime)+\delta_0},\,\,\forall\, \delta_0>0 \,\,\,\text{if}\,\,\, \frac{2n}{n-1}< q<\frac{2n}{n-2} \\
\la^{-\beta_1(n,q)+\delta_0},\,\,\forall\, \delta_0>0  \,\,\,\text{if}\,\,\, \frac{2n}{n-2}\le q<\frac{2n}{n-3}
\end{cases}
\end{equation}

We shall give the explicit form of $\beta_1(n,q)$ later in \eqref{4.48}. Roughly speaking, it is a number that decreases from $\tfrac13$ to $0$ as $q$ increases from $\frac{2n}{n-2}$ to $\frac{2n}{n-3}$.

Here \eqref{4.39} follows easily from the spectral projection bounds of \cite{sogge88} and a simple orthogonality argument. We shall obtain an improvement over \eqref{4.35} at the end of this section by modifying the previous argument that was used to prove \eqref{4.1}.   Right now the bounds in \eqref{4.39} is sufficient since for
 $\e(\la)$ satisfying \eqref{4.41},  \eqref{2.5'} is valid for all for $(p,q)$ exponents satisfying \eqref{1.17} by \eqref{4.39}.

To prove \eqref{4.40}, by duality, it suffices to handle the case where $q\in [\frac{2n}{n-2},\frac{2n}{n-3})$. As before we shall split the resolvent operator as 
$$(-\Delta_g-(\la+i\e(\la))^2)^{-1}=T_\la^0+T_\la^1+R_\la.
$$
As noted earlier the local operator $T^0_\la$ always satisfies the desired bounds 
regardless of the choice of $\e(\la)$. That is 
\begin{equation}\label{4.42}
\|T^0_\la\|_{L^{p}(\Tn)\to L^{q}(\Tn)}
=O(1), \quad \text{if } \, \,
\tfrac1{p}-\tfrac1{q}=\tfrac2n,
\, \, \, \text{and} \,\,\, \tfrac{2n}{n-1}<q<\tfrac{2n}{n-3},
\end{equation}
see e.g. \cite{ShYa} and \cite{SHSo} for a proof of the above inequality. 

For the operator $T_\la$, we
shall need to use the dyadic decomposition
$$T^1_\la =T^{1,0}_\la + \sum_{j=1}^\infty T^{1,j}_\la$$
exactly as before where for $j=1,2,3,\dots$ $T^{1,j}$
is given by \eqref{4.16} and for $j=0$ the analog of this identity with $\beta(2^{-j}t)$ replaced by
$\beta_0(t)\in C^\infty_0(\R)$. 
The operator $T_\la^{1,0}$ behaves like $T_\la^0$ and it is not hard to see that it satisfies \eqref{4.42}. Based
on the desired bounds for $j=0$, we conclude that
if we could show that for some $\delta>0$
\begin{equation}\label{4.43}
\|T^{1,j}_\la \|_{L^{p(q)}(\Tn)\to L^{q}(\Tn)}
=O(2^{-j\delta}) \quad \text{if } \, \,
\tfrac1{p}-\tfrac1{q}=\tfrac2n, \, \, \, j=1,2,3,\dots,
\end{equation}
then we would obtain 
$$\|T_\la\|_{L^{p}(\Tn)\to L^{q}(\Tn)}=O(1),$$
as well as 
$$\|R_\la\|_{L^{p}(\Tn)\to L^{q}(\Tn)}=O(1),$$
since, as mentioned before, the operator $R_\la$ behaves like $T_\la^{1,j}$.

Given \eqref{4.26}, \eqref{4.26'} and \eqref{4.27}, the above inequality now follows easily from an interpolation argument. First, for $\frac{2n}{n-2}\le q\le\frac{2n^2}{(n-1)(n-2)}$, write 
\begin{equation}\label{4.44}
\tfrac1{q} =\theta_1 \cdot \tfrac {n-2}{2n}
+(1-\theta_1) \cdot \tfrac{(n-1)(n-2)}{2n^2}, \quad \text{if } \, \, \theta_1 = \tfrac{2n^2}{n-2}(\tfrac1q-\tfrac{(n-1)(n-2)}{2n^2}).
\end{equation}
Consequently, by interpolation, \eqref{4.26} and
\eqref{4.27} yield for any $\e_0>0$
\begin{equation}\label{4.45}
\begin{aligned}
\|T^{1,j}_\la \|_{L^p(\Tn)\to L^q(\Tn)}
&\lesssim \la^{\e_0} 
\bigl(\la^{-\frac{1}{n}} 2^{\frac{3j}n}\bigr)^{\theta_1}\cdot
\bigl(\la^{-\frac{1}{n}}
2^{\frac{n^2+2n-2}{n^2}j}\bigr)^{1-\theta_1}
\\
&=\la^{\e_0} \la^{-\frac1{n}} \cdot
2^{\frac{n^2+2n-2}{n^2}j}2^{-\frac{n^2-n-2}{n^2}\theta_1 j}.
\end{aligned}
\end{equation}
Similarly, for $\frac{2n^2}{(n-1)(n-2)} \le q< \frac{2n}{n-3}$, write 
\begin{equation}\label{4.46}
\tfrac1{q} =\theta_2\cdot \tfrac{(n-1)(n-2)}{2n^2} +(1-\theta_2) \cdot \tfrac {n-3}{2n}
, \quad \text{if } \, \, \theta_2 = n^2(\tfrac1q-\tfrac{n-3}{2n}).
\end{equation}
By interpolation, \eqref{4.26'} and
\eqref{4.27} yield for any $\e_0>0$
\begin{equation}\label{4.47}
\begin{aligned}
\|T^{1,j}_\la \|_{L^p(\Tn)\to L^q(\Tn)}
&\lesssim \la^{\e_0} 
\bigl(\la^{-\frac{1}{n}}2^{\frac{n^2+2n-2}{n^2}j}\bigr)^{\theta_2}\cdot
\bigl(2^{j}
\bigr)^{1-\theta_2}
\\
&=\la^{\e_0} \la^{-\frac{\theta_2}{n}} \cdot
2^{j}\cdot 2^{\frac{2n-2}{n^2}\theta_2 j}.
\end{aligned}
\end{equation}

As a result, given $\theta_1$ and $\theta_2$ as in \eqref{4.44} and \eqref{4.46}, if we define
\begin{equation}\label{4.48}
\beta_1(n,q)=\begin{cases}
\tfrac{n}{n^2+2n-2-(n^2-n-2)\theta_1},\,\,\,\text{if}\,\,\, \tfrac{2n}{n-2}\le q\le \tfrac{2n^2}{(n-1)(n-2)}\\
\tfrac{n\theta_2}{n^2+(2n-2)\theta_2},\,\,\,\text{if}\,\,\, \frac{2n^2}{(n-1)(n-2)} \le q< \frac{2n}{n-3},
\end{cases}
\end{equation}
by \eqref{4.45} and \eqref{4.47}, we obtain \eqref{4.43} if we fix $\delta_0>0$ in
\eqref{4.41} and choose $\e_0$ above to be sufficiently small since $T^{1,j}_\la=0$ if $2^j$ is larger than
a fixed constant times $(\e(\la))^{-1}$. Thus, the proof of \eqref{4.40} is complete.

To conclude, we shall give the proof of \eqref{1.15'}. We shall focus on the case $\frac{2(n+1)}{n-1}<q\le \frac{2n}{n-2}$, since, 
 the estimates for $q>\frac{2n}{n-2}$ follows as a corollary of Theorem~\ref{thm2.3}.

To proceed, note that by Corollary~\ref{corr2.2} as well as \eqref{4.40}, we would have \eqref{1.15'} if we knew that for $(p,q)$ exponents satisfying 
\begin{equation}\label{4.49}
\tfrac{2(n+1)}{n-1}<q\le \tfrac{2n}{n-2}
\quad \text{and } \, \, \tfrac1p-\tfrac1q=\tfrac2n,
\end{equation}
we had the quasimode estimates
\begin{equation}\label{4.50}
\|u\|_r\lesssim \la^{\sigma(r)-1}\, \bigl(\e(\la)\bigr)^{-1/2}
\|(-\Delta_g-(\la+i\e(\la))^2u\|_2, \, \,
\text{for } \, r=q,p', \, \, \text{and } \, \la\ge 1,
\end{equation}
where we shall take
\begin{equation}\label{4.52}
\e(\la)=\la^{-\beta_2(n, q)+\delta_0}, \, \, \,
\forall \, \delta_0>0,
\end{equation}
with
\begin{equation}\label{4.52'}
\beta_2(n, q)=\tfrac{(n-1)^2q-2(n-1)(n+1)}{(n+1)(n-1)q-2(n+1)^2+8}.
\end{equation}
Actually, given \eqref{4.40} and \eqref{4.50}, in order to apply Corollary~\ref{corr2.2}, it suffices to check \eqref{2.5'} is valid, which is equivalent to $\e(\la)\ge \la^{-\frac12}$ when $\frac{2(n+1)}{n-1}<q\le \frac{2n}{n-2}$. However, for such exponents $q$ we have
$$\min(\beta_1(n, q), \beta_2(n, q))\le 1/3,
$$
which implies \eqref{2.5'}. Also as before the inequality for $r=p^\prime$ \eqref{4.53} is not crucial for our proof.  Indeed,   simple quasimode estimates as in \eqref{4.39} are  sufficient for our use.

Note that compared with \eqref{4.39}, the power on $\e(\la)$ in \eqref{4.50} is sharp, which, as before, is equivalent to the spectral projection estimates
\begin{equation}\label{4.53}
\|\chi_{[\la,\la+\rho]}f\|_{L^{q}(\Tn)}\lesssim
\rho^{1/2} \la^{\sigma(q)} \, \|f\|_{L^2(\Tn)},
\, \, \forall \rho\ge \e(\la).
\end{equation}

To prove \eqref{4.53}, if we repeat the argument in \eqref{4.5}-\eqref{4.10}, by using a $TT^*$ argument, it suffices to prove that for $q>\tfrac{2(n+1)}{n-1}$, and $\e(\la)$ satisfying \eqref{4.52}
\begin{equation}\label{4.54}
\| Tf\|_{L^q(\Tn)}
\lesssim \la^{2\sigma(q)} \, \|f\|_{L^{q^\prime}(\Tn)},
\end{equation}
where
\begin{equation}\label{4.55}
Tf=\int \Hat a\bigl(\e(\la)t\bigr) \, e^{i\la t}
(\cos tP)f \, dt,
\end{equation}
with $a\in {\mathcal S}(\R)$ defined as in \eqref{4.1''}.

As before we shall split the operator in \eqref{4.55}
as
\begin{equation}
\nonumber
Tf=\sum_{j=0}^\infty T_jf,
\end{equation}
where the operator $T_j$ is defined as in \eqref{4.10}.

Clearly then, \eqref{4.55} would be a consequence of the
following
\begin{equation}\label{4.56}
\|T_jf\|_{L^q(\Tn)}\lesssim 
2^{-\delta j}\la^{2\sigma(q)} \|f\|_{L^{q{^\prime}}(\Tn)}, \, \,
\, j=0,1,2,\dots,
\end{equation}
for some $\delta>0$ which depends on $n$ and the
$\delta_0>0$ in \eqref{4.3}.

The bound for $j=0$ is a simple consequence of the
spectral projection estimates of one of us
\cite{sogge88}, while the remaining bounds follow by interpolation from \eqref{4.12} and \eqref{4.13}. Indeed, since
for any $q>\tfrac{2(n+1)}{n-1}$,
$\tfrac1q=\theta\cdot\tfrac{n-1}{2(n+1)}+(1-\theta)\cdot\tfrac1\infty
$, with $\theta=\frac{2(n+1)}{(n-1)q}$,  \eqref{4.12} and \eqref{4.13} yield for all $\e_0>0$
\begin{equation}\label{4.57}
\|T_j\|_{L^{q}(\Tn)\to L^{q^\prime}
(\Tn)}\lesssim \la^{2\sigma(q)+\e_0-\frac{n-1}2\cdot\frac{(n-1)q-2(n+1)}{(n-1)q}} 
\,
2^{j\frac{n+1}2\cdot\frac{(n-1)q-2(n+1)}{(n-1)q}+j\frac{2}{n+1}\cdot\frac{2(n+1)}{(n-1)q}}.
\end{equation}
As a result, given any fixed
$\delta_0$ as in \eqref{4.52}, we obtain \eqref{4.56} if
the loss 
$\e_0>0$ here is small enough, since by \eqref{4.3} and \eqref{4.5},
$T_j=0$ for $2^j$ larger than a fixed constant times
$\e(\la)^{-1}$ defined as in \eqref{4.52}. 

For later use, note that the above argument works for any $n\ge2$. When $n=2$, it gives the following analog of \eqref{4.53}
\begin{equation}\label{4.58}
\|\chi_{[\la,\la+\rho]}f\|_{L^{q}(\Tn)}\lesssim
\rho^{1/2} \la^{\sigma(r)} \, \|f\|_{L^2(\Tn)},
\, \, \forall\, \rho\ge \la^{-\frac{q-6}{3q-10}+\delta_0}, \,\,
 \, \, \forall \, \delta_0>0,\,\,\,\text{if}\,\,\, q>6.
\end{equation}
In particular, at the point $q=\infty$,  we have
\begin{equation}\label{4.59}
\|\chi_{[\la,\la+\rho]}f\|_{L^{\infty}(\Tn)}\lesssim
\rho^{1/2} \la^{\sigma(r)} \, \|f\|_{L^2(\Tn)},
\, \, \forall\, \rho\ge \la^{-\frac{1}{3}},
\end{equation}
by using \eqref{4.13} directly without interpolation with \eqref{4.12}.

\medskip

\noindent {\bf Remark:}
We shall briefly mention that improvements over the inequality \eqref{4.53} can be made in several ways. First, if we take $\rho=\la^{-1}$ in \eqref{4.53}, it 
is conjectured by Bourgain in \cite{bourgain1993eigenfunction} that for $n\ge 3$
\begin{equation}\label{4.61}
\|\chi_{[\la,\la+\la^{-1}]}f\|_{L^{q}(\Tn)}\lesssim
 \la^{\frac{n-2}{2}-\frac nq+\delta_0} \, \|f\|_{L^2(\Tn)},
\, \, \forall \, \delta_0>0, \,\,\la\ge 1 \,\,\,\text{and} \,\,\,q \ge \tfrac{2n}{n-2}.
\end{equation}

As in \eqref{4.32}, by Theorem 2.7 in \cite{BourgainDemeterDecouple}, \eqref{4.61} holds for all $q\ge \tfrac{2(n-1)}{n-3}$, which is currently the best partial results for this problem. It is interesting and not known to the authors that whether one can use \eqref{4.61} for $q\ge \tfrac{2(n-1)}{n-3}$ to improve the range of $\rho$ in the inequality \eqref{4.53} when $\frac{2(n+1)}{n-1}<\rho\le \frac{2n}{n-2}$.  

On the other hand, as in \cite{BSSY} and \cite{Hickman}, we can slightly improve the kernel bound \eqref{4.15'}, and thus obtain an improvement on the range of $\e(\la)$ in inequalities such as \eqref{4.40}, \eqref{4.50} and \eqref{4.53}, by exploiting the cancellation between different terms in \eqref{4.14} using exponential sum estimates. We omit the details here for simplicity.

\newsection{Improved quasimode estimates when $n=2$}\label{2d}

The purpose of this section is to derive improved quasimode estimates under certain geometric assumptions for $n=2$. Throughout this 
section we shall assume that $V\in \mathcal{K}(M)$ satisfying \eqref{kato}, since in two dimensions  $V\in L^1(M)$ can not ensure that the associated Schr\"odinger operator is self-adjoint. For a proof of self-adjointness of Schr\"odinger operators with Kato potentials, see e.g., \cite{BSS}.

Unlike what was the case for higher dimensions in Theorem~\ref{unifSob}, we cannot improve the universal
quasimode bounds in \cite{BSS} when $n=2$.  We can, however, improve the bounds in Theorem~\ref{nonpossob} in two-dimensions
by removing the smallness assumption on $V$ that was made in \cite{BSS}, and we can also obtain new bounds for two-dimensional 
tori.

First, let us see that we have the following analog of Theorem \ref{nonpossob}.
\begin{theorem}\label{thm5.1}
Assume that $(M,g)$ is a Riemannian surface of nonpositive curvature and that $V\in \mathcal{K}(M)$.
Then for $q\geq 6$, if
\begin{equation}\label{5.1}
\delta(q)=\begin{cases}
\frac{1}{72}, \,\,\,\text{if}  \,\,q=6 \\
\frac{1}{2},\,\,\,\,\,\text{if}\,\, q>6,
\end{cases}
\end{equation}
we have for $u\in \Dom(H_V)$ and $\la\ge1$,
\begin{equation}\label{5.2}
\|u\|_{q}\lesssim \la^{\sigma(q)-1}\, \bigl(\e(\la)\bigr)^{-1+\delta(q)} \, 
\bigl\| (H_V-(\la+i\e(\la))^2)u\bigr\|_2,
\end{equation}
where $\e(\la)=(\log(2+\la))^{-1}$. Consequently
\begin{equation}\label{5.3}
\bigl\|\chi^V_{[\la,\la+\e(\la)]} f \bigr\|_{q} \lesssim \la^{\sigma(q)} \, (\log(2+\la))^{-\delta(q)} \, \|f\|_2.
\end{equation}
\end{theorem}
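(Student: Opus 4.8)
\emph{The plan.} The proof runs parallel to Section~\ref{nonpossec}, and in particular to the argument for the critical exponent that produced \eqref{3.20}: one transfers the logarithmically improved bounds for the unperturbed operator $H_0=-\Delta_g$ on a nonpositively curved surface to $H_V$ by means of the second resolvent formula \eqref{1.4} with $\zeta=(\la+i\e(\la))^2$, $\e(\la)=(\log(2+\la))^{-1}$, and then deduces \eqref{5.3} from \eqref{5.2} by inserting $u=\chi^V_{[\la,\la+\e(\la)]}f$ and invoking the spectral theorem exactly as in the passage around \eqref{2.36}. The reason this is not a direct consequence of Corollary~\ref{corr2.2} or Theorem~\ref{thm2.3} is that in two dimensions there are no uniform Sobolev estimates for $-\Delta_g$ at all — the gap relation $\tfrac1p-\tfrac1q=\tfrac2n=1$ forces $p<1$ — so the device used in Section~\ref{abstractsec} to absorb the potential is unavailable and must be replaced. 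This is the sole place where the hypothesis $V\in\mathcal K(M)$ (rather than merely $V\in L^1(M)$) is used in an essential way, through the heat-kernel consequences of the Kato condition \eqref{kato}.

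\emph{Unperturbed input.} First I would record the $n=2$ analogues of \eqref{3.2}, \eqref{3.22} and \eqref{3.23}. For $q>6$ these are the spectral projection estimates of B\'erard~\cite{Berard} and Hassell--Tacy~\cite{HassellTacy}, $\|\chi_{[\la,\la+\e(\la)]}\|_{L^2\to L^q}\lesssim\sqrt{\e(\la)}\,\la^{\sigma(q)}$, equivalently $\|(-\Delta_g-(\la+i\e(\la))^2)^{-1}\|_{L^2\to L^q}\lesssim(\e(\la))^{-1/2}\la^{\sigma(q)-1}$; for the critical exponent $q=6=q_c$ they are the logarithmic improvement of two of the authors~\cite{SBLog}, which yields the gain $(\e(\la))^{1/72}$ over the unit band and hence $\|(-\Delta_g-(\la+i\e(\la))^2)^{-1}\|_{L^2\to L^6}\lesssim(\e(\la))^{-1+1/72}\la^{\sigma(6)-1}$ together with $\|\chi_{[\la,\la+\e(\la)]}\|_{L^2\to L^6}\lesssim(\e(\la))^{1/72}\la^{\sigma(6)}$. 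Exactly as in Section~\ref{nonpossec} these reduce, via Fourier inversion, Huygens' principle and a $TT^*$ argument, to $L^1\to L^\infty$ bounds of size $\la^{(n-1)/2}\la^{c_0\delta_0}$ and $L^2\to L^2$ bounds of size $\log(2+\la)$ for the truncated half-wave propagator, which one proves by lifting to the universal cover and using the Hadamard parametrix; I would simply quote them.

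\emph{The bootstrap.} Next I would split the unperturbed resolvent as in \eqref{3.27.1}, $(-\Delta_g-(\la+i\e(\la))^2)^{-1}=T_\la+R_\la$ with $T_\la=T^0_\la+T^1_\la$ the short- and medium-time pieces (times $\lesssim(\e(\la))^{-1}$) and $R_\la$ the long-time tail, noting that $\|T_\la\|_{L^2\to L^q}$, $\|R_\la\|_{L^2\to L^q}$ and $\|R_\la\circ(-\Delta_g-(\la+i\e(\la))^2)\|_{L^2\to L^q}$ all obey the target bounds — the first from $T_\la=(-\Delta_g-(\la+i\e(\la))^2)^{-1}-R_\la$ and the unperturbed estimate, the others from the spectral projection estimate and the rapid off-diagonal decay \eqref{blah} by an orthogonality argument, just as for \eqref{3.24}--\eqref{3.26}. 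Writing $u$ as in \eqref{3.28},
\[
u=T_\la\bigl(H_V-(\la+i\e(\la))^2\bigr)u+T_\la\bigl(V_{\le N}u\bigr)+T_\la\bigl(V_{>N}u\bigr)+R_\la\bigl(-\Delta_g-(\la+i\e(\la))^2\bigr)u,
\]
the first and last terms are immediately of the form on the right of \eqref{5.2} by the bounds above and the spectral theorem. For the two potential terms I cannot argue as in \eqref{3.31}--\eqref{3.32}, since that step used the $L^{p(q)}\to L^q$ uniform Sobolev bound \eqref{3.27}; instead I would exploit $V\in\mathcal K(M)$. For $T_\la(V_{>N}u)$ I would write $V^{1/2}=(\sgn V)|V|^{1/2}$, estimate $\|\,|V_{>N}|^{1/2}u\,\|_{L^2}$ by a Schur test applied to $|V_{>N}|^{1/2}\,(\text{resolvent kernel})\,|V_{>N}|^{1/2}$: the local part $T^0_\la$ contributes the Kato modulus of $V_{>N}$ at the wavelength scale $\la^{-1}$, which is $\le\omega_V(\epsilon)+h_2(\epsilon)\|V\mathbf 1_{|V|>N}\|_{L^1}$ and hence $\to0$ as $N\to\infty$ uniformly for $\la$ large, while the bounded-kernel pieces $T^1_\la$, $R_\la$ contribute $O(\e(\la)^{-1}\cdot\la^{-1}\cdot\|V_{>N}\|_{L^1})$, again small for $N$ large; this lets one absorb $\|T_\la(V_{>N}u)\|_q\le\tfrac12\|u\|_q$ on the left. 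The bounded part $T_\la(V_{\le N}u)$ then costs only a harmless factor $N^{1/2}$, which is dominated for $\la$ large because $\sigma(q)+\sigma(p')-2<0$ gives the $n=2$ analogue of \eqref{2.5'}; the $L^\infty$ heat bound \eqref{2.37} for $H_V$ (valid since $V$, hence $V_-$, is Kato) is what replaces the Sobolev embedding used at the corresponding point in Section~\ref{abstractsec}. The a priori finiteness $\|u\|_q<\infty$ needed to run the bootstrap follows from the Kato-class Sobolev inequality for $H_V$, i.e.\ the $n=2$ version of Lemma~\ref{Sobv}. Combining the four estimates yields \eqref{5.2} for $\la$ large, and the spectral theorem gives $1\le\la<\Lambda$ as in \eqref{spectral}; \eqref{5.3} then follows by the $\chi^V$ insertion described above.

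\emph{Main obstacle.} The delicate point is precisely the replacement of the uniform Sobolev estimates — the engine of the $n\ge3$ argument — by the Kato-class heat-kernel and Schur-test bounds, carried out while keeping exact track of the powers of $\e(\la)$, so that at the critical exponent $q=6$ the loss is no worse than the $(\e(\la))^{1/72}$ already present in \cite{SBLog} rather than degrading to the trivial $(\e(\la))^{-1}$; for $q>6$ the same scheme goes through with the sharp rate $\delta(q)=\tfrac12$, since there the unperturbed estimate is itself at the sharp rate $(\e(\la))^{-1/2}$.
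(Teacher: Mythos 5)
Your plan has the right skeleton — the resolvent decomposition $T_\la+R_\la$ from \eqref{3.27.1}, the four-term expansion of $u$ as in \eqref{3.28}/\eqref{5.16}, the realization that uniform Sobolev estimates are unavailable when $n=2$ (so the Section~\ref{abstractsec} machinery is off the table), the use of the unperturbed $L^2\to L^q$ estimates of B\'erard/Hassell--Tacy/\cite{SBLog}, and the crucial role of the Kato condition in absorbing the singular part $V_{>N}$. However, the proposal diverges from the paper's argument in a way that leaves a real gap and also contains several confused steps.

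The paper's proof has an essential two-stage structure that is missing from your plan: it first proves \eqref{5.2} at $q=\infty$, and only then derives the case $6\le q<\infty$ by writing the kernel bound \eqref{k1}/\eqref{5.34} for $T^0_\la$ in $L^1\to L^q$ via Minkowski's inequality, giving $\|T^0_\la\|_{L^1\to L^q}\lesssim\la^{-2/q}$, and then estimating $\|T_\la(V_{>N}u)\|_q\lesssim\la^{-2/q}\|V\|_{L^1}\|u\|_\infty$ — which is useful precisely because the $L^\infty$ quasimode bound is already in hand. You never set up this induction on exponents, and your attempt to close the bootstrap directly at exponent $q$ does not go through. In particular, your proposed ``Schur test applied to $|V_{>N}|^{1/2}\,(\text{resolvent kernel})\,|V_{>N}|^{1/2}$'' would give an $L^2\to L^2$ bound for the sandwiched operator, but what is needed is a bound on $\|T_\la(V_{>N}u)\|_q$ in terms of $\|u\|_q$; estimating $\||V_{>N}|^{1/2}u\|_{L^2}$ does not by itself accomplish this without a uniform Sobolev inequality on the outside, which is exactly what you have conceded we lack. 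The paper instead works at $q=\infty$ where the absorption is immediate: $\|T^0_\la(V_{>N}u)\|_\infty\le\|u\|_\infty\cdot\sup_x\int|T^0_\la(x,y)|\,|V_{>N}(y)|\,dy$, and the logarithmic diagonal singularity in \eqref{k1} is dominated by the Kato kernel $h_2$, so the integral is small by \eqref{kato} and \eqref{2.15}; this is what \eqref{5.15} and \eqref{5.19} encode.

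Two further inaccuracies. First, your bound $O(\e(\la)^{-1}\la^{-1}\|V_{>N}\|_{L^1})$ for the contribution of $T^1_\la$ is wrong; the paper uses \eqref{3.19}/\eqref{5T1} to show $\|T^1_\la\|_{L^1\to L^\infty}=O(\la^{-1/2}\la^{c_0\delta_0})\ll 1$ for $\delta_0$ small, which is a different and better bound obtained from the Hadamard parametrix. Second, the claim that the $N^{1/2}$ loss from $T_\la(V_{\le N}u)$ ``is dominated for $\la$ large because $\sigma(q)+\sigma(p')-2<0$ gives the $n=2$ analogue of \eqref{2.5'}'' is spurious — there is no conjugate exponent $p'$ in play here. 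The paper's term $II$ is instead controlled by \eqref{5.30} and \eqref{2.14}, followed by the spectral theorem and $\e(\la)\la\ge1$, as in \eqref{5.20}/\eqref{5.39}, with no smallness-in-$\la$ argument required for that term. Similarly, the heat-kernel bound \eqref{2.37} for $H_V$ is not used in the paper's Section~\ref{2d}; the relevant heat-kernel input is the much softer fact that $\Dom(H_V)\subset L^\infty(M)$, which justifies the bootstrap, and after that the argument proceeds through the explicit kernel estimates \eqref{k1}/\eqref{5.34} rather than through any functional calculus for $H_V$. You would need to repair both the endpoint-$\infty$-first architecture and the Kato-absorption mechanism before this proof closes.
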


To prove \eqref{5.2}, as before we shall use the fact that by \cite{SBLog} and \cite{HassellTacy}, we have \eqref{5.2} when $V\equiv0$, which
is equivalent to the following
\begin{equation}\label{5.4}
\bigl\| \, (-\Delta_g-(\la+i\e(\la))^2)^{-1}\, \bigr\|_{L^2(M)\to L^{q}(M)} \lesssim
\la^{\sigma(q)-1}\, \bigl(\e(\la)\bigr)^{-1+\delta(q)},
\end{equation}
as well as the following bounds for the spectral projection operators associated to $H_0=-\Delta_g$:
\begin{equation}\label{5.5}
\bigl\| \chi_{[\la,\la+\e(\la)]}\bigr\|_{L^2(M)\to L^{q}(M)} \lesssim \la^{\sigma(q)} \, \bigl(\e(\la)\bigr)^{\delta(q)}.
\end{equation}

The proof of \eqref{5.2} is based on the same idea as in the critical exponent case for higher dimensions.
 And unlike in higher dimensions, where we are able to prove uniform sobolev estimates for certain range of exponents, 
 the fact that $\delta(q)=1/2$ for $q>6$ is not crucial in our proof.

\begin{proof}[Proof of Theorem \ref{thm5.1}]

As in \cite{BSS}, we shall first prove \eqref{5.2} for the exponent $q=\infty$, and then use it to obtain \eqref{5.2} for $6\leq q<\infty$.

To proceed, just as before we shall write
\begin{equation}\label{5.6}
\bigl(-\Delta_g-(\la+i\e(\la))^2\bigr)^{-1}=T_\la+R_\la, \quad
\text{where } \, T_\la=T_0+T^1_\la,\end{equation}
with $T_\la^0$, $T^1_\la$ and $R_\la$ as in \eqref{3.14}, \eqref{3.15} and \eqref{3.16}, respectively.

Since $R_\la=m_\la(\sqrt{H_0})$ with $m_\la(\tau)$ as in \eqref{blah} one can use
\eqref{5.5} and a simple orthogonality argument to see that for all $q\geq 6$
\begin{equation}\label{5.7}
\|R_\la\|_{L^2(M)\to L^{q}(M)} \lesssim \la^{\sigma(q)-1}\, \bigl(\e(\la)\bigr)^{-1+\delta(q)},
\end{equation}
and also
\begin{equation}\label{5.8}
\bigl\| R_\la \circ (-\Delta_g-(\la+i\e(\la))^2)\bigr\|_{L^2(M)\to L^{q}(M)}
\lesssim  \la^{\sigma(q)-1}\, \bigl(\e(\la)\bigr)^{-1+\delta(q)} \cdot (\la \,  \e(\la)).
\end{equation}

If we set $T_\la=T^0_\la+T^1_\la$ as above, then since 
$T_\la=(-\Delta_g-(\la+i\e(\la))^2)^{-1}-R_\la$, we trivially obtain from \eqref{5.4} and \eqref{5.7} the bound
\begin{equation}\label{5.9}
\|T_\la\|_{L^2(M)\to L^{q}(M)}\lesssim \la^{\sigma(q)-1}\, \bigl(\e(\la)\bigr)^{-1+\delta(q)}.
\end{equation}

Note that by \eqref{3.19}, if the $\delta_0>0$ used to define $T^1_\la$ is small enough we have
\begin{equation}\label{5T1}\|T^1_\la\|_{L^{1}(M)\to L^{\infty}(M)}=O(\la^{-1/2}\la^{c_0\delta_0})\ll 1.
\end{equation}

Also for the local operator $T^0_\la$,  we have the following kernel estimates
\begin{equation}\label{k1}
|T^0_\la(x,y)|\leq
\begin{cases}
C_0|\log(\la d_g(x,y)/2)|,\hspace{2mm} \text{if} \hspace{2mm} d_g(x,y)\leq \la^{-1}\\
C_0\la^{-1/2}\big(d_g(x,y)\big)^{-1/2},\hspace{2mm}\text{if} \hspace{2mm} \la^{-1}\leq d_g(x,y)\leq 1\\
\end{cases}
\end{equation}
which comes from using stationary phase and the formulas
\begin{equation}\label{5.12}
S^0_\la = \frac{i}{\la+i\e(\la)}\int_0^\infty \eta(\la t) \, \eta(t/T) \, e^{i\la t}e^{-\e(\la)t}\, \cos tP \, dt
\end{equation}
and 
\begin{equation}\label{5.13}
S^1_\la = \frac{i}{\la+i\e(\la)}\int_0^\infty (\eta(t)-\eta(\la t)) \, \eta(t/T) \, e^{i\la t}e^{-\e(\la)t}\, \cos tP \, dt
\end{equation}
separately.

To see this, note that the multiplier associated to the operator $S^0_\la$ is
\begin{equation}
 S^0_\la(\tau) = \frac{i}{\la+i\e(\la)}\int_0^\infty \eta(\la t) \, \eta(t/T) \, e^{i\la t}e^{-\e(\la)t}\, \cos t\tau \, dt,
    \nonumber
\end{equation}
Using integration by parts, it is not hard to see that  for $j=0,1,2,...$,
\begin{equation}\label{5.14}
|\tfrac{d^j}{d\tau^j}S_0(\tau)|\le \begin{cases}
  C_j\lambda^{-2-j}\ \ \ \ \ \ {\rm if}\ \ |\tau|\le\lambda,\\
  C_j|\tau|^{-2-j}\ \ \ \ \ {\rm if}\ \ |\tau|>\lambda.
\end{cases}
\end{equation}
Given \eqref{5.14}, if  we argue as in the proof of \cite{SFIO2} Theorem 4.3.1, along with a change of variables, we have $|S_0(x,y)|\le C_0|\log(\la d_g(x,y)/2)| \textbf{1}_{d_g(x,y)<\la^{-1}}(x,y) $. The kernel for the operator $S^1_\la$ is a consequence of stationary phase argument after using Hadamard parametrix, see \cite{BSSY} and \cite{ShYa} for more details.

Since by heat kernel methods we have $\text{Dom}(H_V)\subset L^\infty(M)$ when $n=2$, by the very definition of the Kato space, $S^0_\la(Vu)$ is given by an absolutely convergent integral. Thus, if $\Lambda=\Lambda(M,V)\ge 1$ is sufficiently large, we have
since $V\in {\mathcal K}$
\begin{equation}\label{5.15}
\|S^0_\la(Vu)\|_{L^{\infty}(M)}\leq 1/4 \|u\|_{L^{\infty}(M)},\,\,\,\, \text{if} \,\,\,\, \la\geq \Lambda.
\end{equation}

To use these bounds write
\begin{align}\label{5.16}
u&= \bigl(-\Delta_g-(\la+i\e(\la))^2\bigr)^{-1} \circ (-\Delta_g-(\la+i\e(\la))^2)u
\\
&=T_\la\bigl(-\Delta_g+V-(\la+i\e(\la))^2\bigr)u \, + \, T_\la\bigl( V_{\le N}\cdot u\bigr) \, + T_\la\bigl(V_{>N}\cdot u\bigr) \notag
\\
&\qquad\qquad+R_\la\bigl(-\Delta_g-(\la +i\e(\la))^2\bigr)u \
\notag
\\
&= I +II +III + IV, \notag
\end{align}
with $V_{\le N}$ and $V_{>N}$ as in \eqref{2.13}.

By \eqref{5.9}
\begin{equation}\label{5.17}
\|I\|_{\infty}\lesssim \bigl(\e(\la)\bigr)^{-1/2} \la^{-1/2} \|(H_V-(\la+i\e(\la))^2)u\|_2,
\end{equation}
and by \eqref{5.8} we similarly obtain
\begin{multline}\label{5.18}
\|IV\|_{\infty}
\lesssim  \bigl(\e(\la)\bigr)^{-1/2} \,  \la^{-1/2} \cdot (\la \,  \e(\la)) \|u\|_2
\\
\lesssim  \bigl(\e(\la)\bigr)^{-1/2} \,  \la^{-1/2} \|(H_V-(\la+i\e(\la))^2)u\|_2,
\end{multline}
using the spectral theorem in the last inequality.

If we use \eqref{5T1}, \eqref{k1} and  \eqref{5.15}, along with H\"older's inequality, we conclude that we can fix
$N$ large enough so that
\begin{equation}\label{5.19}
\|III\|_{\infty}\le \tfrac12 \, \|u\|_{\infty}, \,\,\,\, \text{if}\,\,\, \la\geq \Lambda.
\end{equation}
Also, \eqref{5.9}  and \eqref{2.14} yield for this fixed $N$
\begin{multline}\label{5.20}
\|II\|_{\infty} \le C_N  \bigl(\e(\la)\bigr)^{-1/2} \,  \la^{-1/2} \|u\|_2
\\
\lesssim  \bigl(\e(\la)\bigr)^{-1/2} \,  \la^{-1/2} \|(H_V-(\la+i\e(\la))^2)u\|_2,
\end{multline}
using the spectral theorem and the fact that $\e(\la)\cdot \la\ge1$ if $\la\ge 1$.

Combining \eqref{5.17}, \eqref{5.18}, \eqref{5.19} and \eqref{5.20} yields
\begin{equation}\label{5.21}
\|u\|_{\infty}\lesssim \bigl(\e(\la)\bigr)^{-1/2} \,  \la^{-1/2} \|(H_V-(\la+i\e(\la))^2)u\|_2,\,\,\,\, \text{if}\,\,\, \la\geq \Lambda.
\end{equation}

To obtain the quasimode estimate \eqref{5.2} for $q=\infty$, we need to see that the bounds
in \eqref{5.21} are also valid when $1\le \la<\Lambda$, As before this just follows from the fact that
$$\bigl\|(H_V-\la^2+i\e(\la)\la)^{-1}f\bigr\|_{L^2(M)}
\le C
\bigl\|(H_V-\la^2+i\e(\Lambda)\Lambda)^{-1}f\bigr\|_{L^2(M)}, 
\quad \text{if } \, \, 1\le \la\le \Lambda,
$$
where $C$ is a constant that depends on $\Lambda$.

Now we shall prove  \eqref{5.2} for $6\leq q<\infty$.   We shall focus on the term $III$, since by \eqref{5.7}, \eqref{5.8}, and \eqref{5.9}, the other three terms are easily bounded by the right side of \eqref{5.2}.
Note that by \eqref{k1}, we have
$$
\sup_y\big(\int_M|T^0_\la(x,y)|^q dx\big)^{1/q}\leq C\la^{-2/q}, \,\,\,\,\text{if}\,\,\,  6\leq q<\infty.
$$
Whence by Minkowski's integral inequality,
\begin{equation}\label{5.22}\|T^0_\la\|_{L^{1}(M)\to L^{q}(M)}\leq C\la^{-2/q}.
\end{equation}
If we combine \eqref{5T1} and \eqref{5.22}, by H\"older's inequality,
$$
\|T_\la(V_{>N}u)\|_q\leq C\la^{-2/q}\|V_{>N}u\|_1\leq C\la^{-2/q}\|V\|_1\|u\|_\infty.
$$
Since we have just proved that
\begin{equation}
\|u\|_{\infty}\lesssim \bigl(\e(\la)\bigr)^{-1/2} \,  \la^{-1/2} \|(H_V-(\la+i\e(\la))^2)u\|_2,
\nonumber
\end{equation}
we conclude that the term $III$ is dominated by the right side of \eqref{5.2}.
\end{proof}

We can also obtain the following improved quasimode estimates for the 2-dimensional torus:
\begin{theorem}\label{thm5.2}
Let $\mathbb{T}^2$ denotes the 2 dimensional torus with flat metric, assume that $V\in \mathcal{K}(\mathbb{T}^2)$.
Then for $q> 6$, if
\begin{equation}\label{5.23}
\e(\la)=\e(\la,q)=\begin{cases} \la^{-\frac{q-6}{3q-10}+\delta_0} \quad \forall \,\delta_0>0 \,\,\, \text{if}\,\,\, 6<q<\infty \\
 \la^{-\frac13} \,\,\, \text{if}\,\,\, q=\infty
\end{cases}
\end{equation}
we have for $u\in \Dom(H_V)$ and $\la\ge1$,
\begin{equation}\label{5.24}
\|u\|_{L^q({\mathbb T}^2)} \lesssim \la^{\sigma(q)-1}\, \bigl(\e(\la)\bigr)^{-1/2} \, 
\bigl\| (H_V-(\la+i\e(\la))^2)u\bigr\|_{L^2({\mathbb T}^2)}.
\end{equation} 
Similarly, if $\e(\la)\geq \la^{-1/5}$, we have
\begin{equation}\label{5.25}
\|u\|_{{L^6({\mathbb T}^2)}}\lesssim \la^{\e_0}\, \bigl(\la\cdot\e(\la)\bigr)^{-5/6} \, 
\bigl\| (H_V-(\la+i\e(\la))^2)u\bigr\|_{L^2({\mathbb T}^2)}.
\end{equation} 
\end{theorem}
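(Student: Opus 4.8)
The plan is to run the argument of Theorem~\ref{thm5.1} essentially verbatim, replacing the log–improved spectral cluster and resolvent estimates for $-\Delta_g$ on a nonpositively curved surface by the corresponding bounds on the flat torus $\mathbb T^2$ that were established in \S\ref{torussec}. Since $n=2$, the critical exponent is $q_c=\tfrac{2(n+1)}{n-1}=6$, so \eqref{5.25} is the two–dimensional analog of the critical estimate \eqref{4.21}, while \eqref{5.24} is the analog of the subcritical estimates for $q>6$. By the second resolvent formula \eqref{1.4} it suffices to have, for $H_0=-\Delta_{\mathbb T^2}$,
$$\bigl\|(-\Delta_g-(\la+i\e(\la))^2)^{-1}\bigr\|_{L^2(\mathbb T^2)\to L^q(\mathbb T^2)}\lesssim \la^{\sigma(q)-1}(\e(\la))^{-1/2},\qquad q>6,$$
with $\e(\la)$ as in \eqref{5.23}, the sharper bound $\lesssim \la^{\e_0}(\la\e(\la))^{-5/6}$ when $q=6$ and $\e(\la)\ge\la^{-1/5}$, and the companion spectral projection estimates obtained from these via the spectral theorem. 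The subcritical resolvent bounds follow from the torus cluster estimates \eqref{4.58} and \eqref{4.59} by writing the resolvent multiplier $(\tau^2-(\la+i\e(\la))^2)^{-1}$ as a sum over dyadic shells $|\tau-\la|\sim 2^k\e(\la)$ and summing a convergent geometric series; the critical bound at $q=6$ comes from the discrete Stein--Tomas inequality \eqref{4.4} specialized to $n=2$, $q_c=6$.

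First I would set up the torus version of the resolvent splitting used in \S\ref{nonpossec}--\S\ref{torussec}: write $(-\Delta_{\mathbb T^2}-(\la+i\e(\la))^2)^{-1}=T_\la+R_\la$ with $T_\la=T^0_\la+T^1_\la$ as in \eqref{3.14}--\eqref{3.16}, now taking $T=2\delta_0(\e(\la))^{-1}$ and $\e(\la)$ as in \eqref{5.23}. The operator $R_\la=m_\la(\sqrt{H_0})$ has multiplier rapidly decaying away from $[\la,\la+\e(\la)]$, so the spectral projection estimates above yield the needed $L^2\to L^q$ bounds for $R_\la$ and for $R_\la\circ(-\Delta_{\mathbb T^2}-(\la+i\e(\la))^2)$ (the latter with an extra factor $\la\e(\la)$). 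The local operator $T^0_\la$ satisfies the pointwise kernel bounds \eqref{k1} — the stationary phase computation underlying them in \S\ref{2d} is purely local and hence unaffected by the change of metric — whence $\|T^0_\la\|_{L^1(\mathbb T^2)\to L^q(\mathbb T^2)}\lesssim \la^{-2/q}$ by Minkowski's inequality for $6\le q<\infty$, as well as $\|T^0_\la\|_{L^2\to L^6}\lesssim\la^{\sigma(6)-1}$ by the classical quasimode bounds. For $T^1_\la$ one uses the dyadic decomposition $T^1_\la=T^{1,0}_\la+\sum_{j\ge1}T^{1,j}_\la$, with $T^{1,j}_\la$ as in \eqref{4.16}: $T^{1,0}_\la$ behaves like $T^0_\la$, while for $j\ge1$ one has $T^{1,j}_\la=\la^{-1}m_{\la,j}(\sqrt{H_0})$ with $m_{\la,j}(\tau)=O\bigl(2^j(1+2^j|\tau-\la|)^{-N}\bigr)$ and, by stationary phase and lattice–point counting on $\mathbb T^2$ as in \eqref{4.14}--\eqref{4.15'}, kernel $O(\la^{-1/2}2^{3j/2})$. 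Interpolating the Stein--Tomas bound (from \eqref{4.4} with $\rho=2^{-j}$) against this kernel bound, exactly as in the derivation of \eqref{4.25} and \eqref{4.17}, gives geometric decay $O(2^{-\delta j})$ for the relevant mapping norms, and since $T^{1,j}_\la=0$ when $2^j$ exceeds a fixed multiple of $(\e(\la))^{-1}$, the sum over $j$ converges; it is this step that fixes the admissible range of $\e(\la)$, in particular the requirement $\e(\la)\ge\la^{-1/5}$ in \eqref{5.25}.

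With these unperturbed estimates in hand, the perturbation step is identical to the proof of Theorem~\ref{thm5.1}. Using \eqref{1.4} and the splitting $V=V_{\le N}+V_{>N}$ from \eqref{2.13}, write $u=I+II+III+IV$ as in \eqref{5.16}. The terms $I$ and $IV$ are controlled by the $L^2\to L^q$ bounds on $T_\la$ and on $R_\la\circ(-\Delta_{\mathbb T^2}-(\la+i\e(\la))^2)$ together with the spectral theorem; the term $II$, carrying the bounded potential $V_{\le N}$, by the $L^2\to L^q$ bound on $T_\la$, the spectral theorem and the inequality $\la\e(\la)\ge1$; and the term $III$, carrying $V_{>N}$, by combining the kernel bounds for $T^0_\la$ and $T^1_\la$ with H\"older's inequality, the definition \eqref{kato} of the Kato class, and the a priori bound $\Dom(H_V)\subset L^\infty(\mathbb T^2)$ (valid for $n=2$ and $V\in{\mathcal K}$ by heat–kernel methods, as recalled in \S\ref{abstractsec}). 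As in Theorem~\ref{thm5.1} one proves first the $q=\infty$ case of \eqref{5.24} and the $q=6$ case of \eqref{5.25}, and then deduces \eqref{5.24} for $6<q<\infty$ from the $q=\infty$ estimate together with $\|T^0_\la\|_{L^1\to L^q}\lesssim\la^{-2/q}$ and $\|T^1_\la\|_{L^1\to L^q}\ll\la^{-2/q}$; here one uses that, for $\e(\la)$ as in \eqref{5.23}, one always has $\e(\la)\ge\la^{-1/3}$, so that the $q=\infty$ bound indeed applies to supply the needed control of $\|u\|_\infty$. The range $1\le\la<\Lambda$ is disposed of by the spectral theorem, as in the proof of Theorem~\ref{thm5.1}.

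The main obstacle is Step~2, namely assembling the torus estimates for the unperturbed operator — especially the dyadic kernel analysis of $T^1_\la$ at the critical exponent $q_c=6$, and pinning down the exact range (here $\e(\la)\ge\la^{-1/5}$) for which the geometric sums over dyadic time scales converge and the bootstrap for term $III$ closes; this is the two–dimensional counterpart of the delicate numerology in \eqref{4.20}--\eqref{4.25}. Everything after that is a routine transcription of \S\ref{2d} and \S\ref{torussec}: no new perturbation–theoretic ingredient is needed beyond the $L^\infty$ mapping property of $\Dom(H_V)$, which for Kato potentials in two dimensions is already available.
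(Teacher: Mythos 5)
Your proposal follows the paper's proof essentially verbatim: the same resolvent splitting $(-\Delta_{\mathbb T^2}-(\la+i\e(\la))^2)^{-1}=T^0_\la+T^1_\la+R_\la$, the same torus spectral projection inputs \eqref{4.4}, \eqref{4.58}, \eqref{4.59} for $R_\la$, the same dyadic kernel analysis of $T^1_\la$ (with the $j$-th piece bounded by $O(\la^{-1/2}2^{3j/2})$ on $L^1\to L^\infty$), and the same four-term bootstrap $u=I+II+III+IV$ with the $q=\infty$ case proved first so its a priori $L^\infty$ bound on $u$ can be fed into the $V_{>N}$ term. The numerology you defer to the ``main obstacle'' — summing over $j$ as in \eqref{5.44'}--\eqref{5.45} to pin down the thresholds in \eqref{5.23}, and the comparison in \eqref{5.46} producing the requirement $\e(\la)\ge\la^{-1/5}$ in \eqref{5.25} — is precisely what the paper supplies, and you have correctly identified where that calculation sits and which inputs it requires.
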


To prove \eqref{5.24}, we shall of course use the fact that by the spectral projection bounds in \eqref{4.58} and \eqref{4.59},  if $\e(\la)$ satisfies \eqref{5.23}, we have \eqref{5.24} when $V\equiv0$ which
is equivalent to the following
\begin{equation}\label{5.26}
\bigl\| \, (-\Delta_g-(\la+i\e(\la))^2)^{-1}\, \bigr\|_{L^2(M)\to L^{q}(M)} \lesssim
\la^{\sigma(q)-1}\, \bigl(\e(\la)\bigr)^{-1/2}.
\end{equation}
Also for the critical point $q=6$ we shall use 
\begin{multline}\label{5.27}
\bigl\| \, (-\Delta_g-(\la+i\e(\la))^2)^{-1}\, \bigr\|_{L^2(M)\to L^{6}(M)} \lesssim
\la^{\e_0}\bigl(\la\e(\la)\bigr)^{-5/6}, \\
\forall\, \e_0>0, \,\,\,\text{if}\,\,\, \la^{-1}\le \e(\la)\le 1,
\end{multline}
which is a consequence of the spectral projection estimates in \eqref{4.4}.

Now let us see how we can modify the proof of \eqref{5.2} to obtain \eqref{5.24} and \eqref{5.25}. As before, we shall first prove \eqref{5.2} for the exponent $q=\infty$, and then use it to obtain similar inequalities for $6\leq q<\infty$.

To proceed, write
\begin{equation}
\nonumber
\bigl(-\Delta_g-(\la+i\e(\la))^2\bigr)^{-1}=T_\la+R_\la, \quad
\text{where } \, T_\la=T_0+T^1_\la,\end{equation}
with $T_\la^0$, $T^1_\la$ and $R_\la$ as in \eqref{3.14}, \eqref{3.15} and \eqref{3.16}, respectively.

Since $R_\la=m_\la(\sqrt{H_0})$ with $m_\la(\tau)$ as in \eqref{blah} one can use
\eqref{4.58}, \eqref{4.59} and a simple orthogonality argument to see that for all $q> 6$
\begin{equation}\label{5.28}
\|R_\la\|_{L^2(\mathbb{T}^2)\to L^{q}(\mathbb{T}^2)} \lesssim \la^{\sigma(q)-1}\, \bigl(\e(\la)\bigr)^{-1/2},
\end{equation}
and also
\begin{equation}\label{5.29}
\bigl\| R_\la \circ (-\Delta_g-(\la+i\e(\la))^2)\bigr\|_{L^2(\mathbb{T}^2)\to L^{q}(\mathbb{T}^2)}
\lesssim  \la^{\sigma(q)-1}\, \bigl(\e(\la)\bigr)^{-1/2} \cdot (\la \,  \e(\la)).
\end{equation}

If we set $T_\la=T^0_\la+T^1_\la$ as above, then since 
$T_\la=(-\Delta_g-(\la+i\e(\la))^2)^{-1}-R_\la$, we trivially obtain from \eqref{5.26} and \eqref{5.28} the bound
\begin{equation}\label{5.30}
\|T_\la\|_{L^2(\mathbb{T}^2)\to L^{q}(\mathbb{T}^2)}\lesssim \la^{\sigma(q)-1}\, \bigl(\e(\la)\bigr)^{-1/2}, \,\,\,\text{if}\,\,\, q>6.
\end{equation}

For the operator $T^1_\la$, we claim that if $\e(\la)\ge \la^{-1/3}$ as in \eqref{5.23}, we have 
\begin{equation}\label{5.31}\|T^1_\la\|_{L^{1}(\mathbb{T}^2)\to L^{\infty}(\mathbb{T}^2)}=O(1).
\end{equation}
To see this, we shall split the integral dyadically as before by writing
$$T^{1}_\la = T^{1,0}_\la +\sum_{j=1}^\infty T^{1,j}_\la,
$$
where for $j=1,2,\dots$
\begin{equation}
\nonumber
T^{1,j}_\la = \frac{i}{\la+i\e(\la)}\int_0^\infty 
\beta(2^{-j}t) \,
(1-\eta(t)) \, \eta(t/T) \, e^{i\la t}e^{-\e(\la)t}\, \cos tP \, dt,
\end{equation}
and $T^{1,0}_\la$ is given by an analogous
formula with $\beta(2^{-j}t)$ replaced by
$\beta_0(t)\in C^\infty_0(\Rn)$.

If $j=0$, by using the spectral projection estimates of one of us \cite{sogge88} and the fact that 
\begin{equation}
\nonumber
T^{1,0}_\la(\tau) \lesssim \la^{-1}(1+|\la-\tau|)^{-N}\,\,\,\, \forall \, N \, \,  \text{if }  \, \la\ge1 \, 
\, \text{ and } \, \,  \tau\ge 0,
\end{equation}
 it is not hard to obtain
\begin{equation}\label{5.32}
\|T^{1,0}_\la\|_{L^{1}(\mathbb{T}^2)\to L^{\infty}(\mathbb{T}^2)}=O(1).
\end{equation}
On the other hand, if $j>0$, by using \eqref{4.19} for $n=2$, we have
\begin{equation}\label{5.33}
\|T_\la^{1,j}f\|_{L^{\infty}(\mathbb{T}^2)} \lesssim 
2^{\frac32 j}\la^{-\frac12} \|f\|_{L^{1}(\mathbb{T}^2)}, \, \,
\, j=1,2,\dots .
\end{equation}
Since $T^{1,j}_\la=0$ if $2^j$ is larger than
a fixed constant times $(\e(\la))^{-1}$, after summing over $j$, if $\e(\la)\ge \la^{-1/3}$, we obtain \eqref{5.31}.

As for the local operator $T_\la^0$, by repeating the argument in \eqref{5.12}-\eqref{5.15}, we have the following kernel estimates
\begin{equation}\label{5.34}
|T^0_\la(x,y)|\leq
\begin{cases}
C_0|\log(\la d_g(x,y)/2)|,\hspace{2mm} \text{if} \hspace{2mm} d_g(x,y)\leq \la^{-1}\\
C_0\la^{-1/2}\big(d_g(x,y)\big)^{-1/2},\hspace{2mm}\text{if} \hspace{2mm} \la^{-1}\leq d_g(x,y)\leq 1, \\
\end{cases}
\end{equation}
which is independent of the choice of $\e(\la)$.

To use these bounds write
\begin{align}\label{5.35}
u&= \bigl(-\Delta_g-(\la+i\e(\la))^2\bigr)^{-1} \circ (-\Delta_g-(\la+i\e(\la))^2)u
\\
&=T_\la\bigl(-\Delta_g+V-(\la+i\e(\la))^2\bigr)u \, + \, T_\la\bigl( V_{\le N}\cdot u\bigr) \, + T_\la\bigl(V_{>N}\cdot u\bigr) \notag
\\
&\qquad\qquad+R_\la\bigl(-\Delta_g-(\la +i\e(\la))^2\bigr)u \
\notag
\\
&= I +II +III + IV, \notag
\end{align}
with $V_{\le N}$ and $V_{>N}$ as in \eqref{2.13}.

By \eqref{5.30}
\begin{equation}\label{5.36}
\|I\|_{\infty}\lesssim \bigl(\e(\la)\bigr)^{-1/2} \la^{-1/2} \|(H_V-(\la+i\e(\la))^2)u\|_2,
\end{equation}
and by \eqref{5.29} we similarly obtain
\begin{multline}\label{5.37}
\|IV\|_{\infty}
\lesssim  \bigl(\e(\la)\bigr)^{-1/2} \,  \la^{-1/2} \cdot (\la \,  \e(\la)) \|u\|_2
\\
\lesssim  \bigl(\e(\la)\bigr)^{-1/2} \,  \la^{-1/2} \|(H_V-(\la+i\e(\la))^2)u\|_2,
\end{multline}
using the spectral theorem in the last inequality.

If we use \eqref{5.31}, \eqref{5.34}, and the definition of Kato class, we conclude as before that we can fix
$N$ large enough so that
\begin{equation}\label{5.38}
\|III\|_{\infty}\le \tfrac12 \, \|u\|_{\infty}, \,\,\,\, \text{if}\,\,\, \la\geq \Lambda.
\end{equation}
Also, \eqref{5.30}  and \eqref{2.14} yield for this fixed $N$
\begin{multline}\label{5.39}
\|II\|_{\infty} \le C_N  \bigl(\e(\la)\bigr)^{-1/2} \,  \la^{-1/2} \|u\|_2
\\
\lesssim  \bigl(\e(\la)\bigr)^{-1/2} \,  \la^{-1/2} \|(H_V-(\la+i\e(\la))^2)u\|_2,
\end{multline}
using the spectral theorem and the fact that $\e(\la)\cdot \la\ge1$ if $\la\ge 1$.

Combining \eqref{5.36}, \eqref{5.37}, \eqref{5.38} and \eqref{5.39} yields
\begin{equation}\label{5.40}
\|u\|_{\infty}\lesssim \bigl(\e(\la)\bigr)^{-1/2} \,  \la^{-1/2} \|(H_V-(\la+i\e(\la))^2)u\|_2,\,\,\,\, \text{if}\,\,\, \la\geq \Lambda.
\end{equation}

To obtain the quasimode estimate \eqref{5.24} for $q=\infty$, we need to see that the bounds
in \eqref{5.21} are also valid when $1\le \la<\Lambda$, As before this just follows from the fact that
$$\bigl\|(H_V-\la^2+i\e(\la)\la)^{-1}f\bigr\|_{L^2(\mathbb{T}^2)}
\le C
\bigl\|(H_V-\la^2+i\e(\Lambda)\Lambda)^{-1}f\bigr\|_{L^2(\mathbb{T}^2)}, 
\quad \text{if } \, \, 1\le \la\le \Lambda,
$$
where $C$ is a constant that depend on $\Lambda$.

Now we shall prove quasimode estimates for $q<\infty$. First, if $6<q<\infty$, by using \eqref{5.28}, \eqref{5.29} and \eqref{5.30}, we see that the terms $I$, $II$, and $IV$ are bounded by the right side of \eqref{5.24}. Thus, we only need to focus on the third term $III$. 
Note that by \eqref{5.34}, we have
$$
\sup_y\big(\int_M|T^0_\la(x,y)|^q dx\big)^{1/q}\leq C\la^{-2/q}, \,\,\,\,\text{if}\,\,\,  6\leq q<\infty.
$$
Whence by Minkowski's integral inequality,
\begin{equation}\label{5.41}\|T^0_\la\|_{L^{1}(\mathbb{T}^2)\to L^{q}(\mathbb{T}^2)}\leq C\la^{-2/q}.
\end{equation}

The $T_\la^{1,0}$ operator behaves like the local operator and we can also use the spectral projection estimates in \cite{sogge88} to get
\begin{equation}\label{5.42}\|T^{1,0}_\la\|_{L^{1}(\mathbb{T}^2)\to L^{q}(\mathbb{T}^2)}\leq C\la^{-2/q}.
\end{equation}
To obtain the analog of \eqref{5.42} for the operator $T_\la^{1,j}$, we shall use interpolation between \eqref{5.33} and the following estimates:
\begin{equation}\label{5.43}
\|T_\la^{1,j}f\|_{L^{2}(\mathbb{T}^2)} \lesssim 
2^{\frac j2 }\la^{-\frac12} \|f\|_{L^{1}(\mathbb{T}^2)}, \, \,
\, j=1,2,\dots,
\end{equation}
which follows from applying a dual version of \eqref{4.58} with $\rho=2^{-j}$ as well as the fact that
$$T_\la^{1,j}(\tau) =O\bigl(2^j(1+2^j|\la-\tau|)^{-N}),
\quad
\forall \, N \, \,  \text{if }  \, \la\ge1 \, 
\, \text{ and } \, \,  \tau\ge 0.$$

Since $\frac1q=\frac12\cdot \theta+\frac1\infty\cdot(1-\theta)$, with $\theta=\frac2q$, by interpolation between \eqref{5.33} and \eqref{5.43}, we get
\begin{equation}\label{5.44}\|T^{1,j}_\la\|_{L^{1}(\mathbb{T}^2)\to L^{q}(\mathbb{T}^2)}\leq C\la^{-1/2}2^{j(\frac32-\frac2q)}, \,\,\,\text{if}\,\,\, 2<q<\infty.
\end{equation}
After summing over the
$j\in {\mathbb N}$ with
$2^j\lesssim \e(\la)^{-1}$, we conclude that
\begin{equation}\label{5.44'}
\|T^1_\la\|_{L^{1}(\mathbb{T}^2)\to L^{q}(\mathbb{T}^2)}\leq C \la^{-1/2}\e(\la)^{\frac2q-\frac32}+C\la^{-2/q}.
\end{equation}
Thus, we would have 
\begin{equation}\label{5.45}\|T^1_\la\|_{L^{1}(\mathbb{T}^2)\to L^{q}(\mathbb{T}^2)}\leq C\la^{-2/q},
\end{equation}
if we knew $\e(\la)\geq \la^{-\frac{q-4}{3q-4}}$. However, since $\frac{q-4}{3q-4}>\frac{q-6}{3q-10}$, this yields \eqref{5.45} for all $\e(\la)$ satisfying \eqref{5.23}, if $6<q<\infty$.

If we combine \eqref{5.41} and \eqref{5.45}, by H\"older's inequality,
$$
\|T_\la(V_{>N}u)\|_q\leq C\la^{-2/q}\|V_{>N}u\|_1\leq C\la^{-2/q}\|V\|_1\|u\|_\infty
$$
Since we have just proved that
\begin{equation}
\|u\|_{\infty}\lesssim \bigl(\e(\la)\bigr)^{-1/2} \,  \la^{-1/2} \|(H_V-(\la+i\e(\la))^2)u\|_2,
\nonumber
\end{equation}
we conclude that the term $III$ is dominated by the right side of \eqref{5.2}, this finishes the proof of \eqref{5.24}.

To conclude the section we shall prove \eqref{5.25}, by using \eqref{5.27}, \eqref{4.4}, and repeating the arguments above, we can easily see that the terms $I$, $II$, and $IV$ are bounded by the right side of \eqref{5.25}. For the third term $III$, if we combine \eqref{5.41} and \eqref{5.44'}, and use \eqref{5.24} for $q=\infty$ as above, we have
\begin{equation}\label{5.46}
\begin{aligned}
\|T_\la(V_{>N}u)\|_6&\leq C(\la^{-1/3}+\la^{-1/2}\e(\la)^{\frac13-\frac32})\|V_{>N}u\|_1\\
&\leq C(\la^{-1/3}+\la^{-1/2}\e(\la)^{-\frac76})\|V\|_1\|u\|_\infty \\
&\leq \bigl(\bigl(\e(\la)\bigr)^{-1/2} \,  \la^{-5/6}+\bigl(\e(\la)\bigr)^{-5/3} \,  \la^{-1} \bigr)\|(H_V-(\la+i\e(\la))^2)u\|_2,
\end{aligned}
\end{equation}
which is bounded by the right side of \eqref{5.25} if $\e(\la)\geq\la^{-1/5}$. Thus, the proof of \eqref{5.25} is complete.

\newsection{ Appendix: Self-adjointness and limited Sobolev estimates}\label{appendix}

As we stated before, for brevity, $dx$  denotes the Riemannian volume element for $(M,g)$.

\begin{proposition}\label{self-adjoint}
For $n\geq3$, if $V\in L^{n/2}(M)$ the quadratic form,
\begin{equation}\label{s0}
q_V(u,v)=-\int_M Vu \, \overline{v}\, dx +\int -\Delta_g u \, \overline{v}\, dx, \quad
u,v \in \Dom(\sqrt{-\Delta_g+1}),
\end{equation}
is bounded from below and defines a unique semi-bounded self-adjoint operator $H_V$ on $L^2$.  Moreover, $C^\infty(M)$ constitutes a form core\footnote{Recall that a \emph{form core} for $q_V$ is a subspace $S$ which approximates elements $u$ in the domain of the form in that there exists a sequence $u_m \in S$ satisfying $\lim_m \|u-u_m\|^2 + q_V(u-u_m,u-u_m) = 0$.} for $q_V$.
\end{proposition}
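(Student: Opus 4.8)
\emph{Proof proposal.} The plan is to realize $H_V$ through the KLMN theorem, so the heart of the matter is to verify that the potential term is infinitesimally form-bounded with respect to $-\Delta_g$. Concretely, I would establish that for every $\epsilon>0$ there is $C_\epsilon<\infty$ so that
\[
\Bigl|\int_M V\,|u|^2\,dx\Bigr|\le \epsilon\int_M|\nabla_g u|^2\,dx + C_\epsilon\int_M|u|^2\,dx,\qquad u\in \Dom(\sqrt{-\Delta_g+1})=H^1(M).
\]
To prove this I would split $V=V_{\le N}+V_{>N}$ as in \eqref{2.13}. The bounded part is harmless: $|\int_M V_{\le N}|u|^2\,dx|\le N\|u\|_2^2$ by \eqref{2.14}. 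For the singular part, H\"older's inequality with the conjugate exponents $\tfrac n2$ and $\tfrac n{n-2}$ gives
\[
\Bigl|\int_M V_{>N}\,|u|^2\,dx\Bigr|\le \|V_{>N}\|_{L^{n/2}(M)}\,\|u\|_{L^{\frac{2n}{n-2}}(M)}^2,
\]
and since $M$ is compact the critical Sobolev embedding $H^1(M)\hookrightarrow L^{\frac{2n}{n-2}}(M)$ (valid for $n\ge 3$) yields $\|u\|_{L^{\frac{2n}{n-2}}(M)}^2\le C_M\bigl(\int_M|\nabla_g u|^2\,dx+\|u\|_2^2\bigr)$. Because $\|V_{>N}\|_{L^{n/2}(M)}\to 0$ as $N\to\infty$ by \eqref{2.15}, taking $N$ large makes the coefficient of the gradient term arbitrarily small, which is precisely the claimed bound; in particular the same estimate, via polarization, shows that $q_V(u,v)$ is well-defined (and symmetric, since $V$ is real) for all $u,v\in H^1(M)$.

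Granted this infinitesimal form-boundedness, the KLMN theorem applies directly. Since $-\Delta_g$ is a nonnegative self-adjoint operator whose form domain is $H^1(M)$, the perturbed form $q_V$ is semibounded from below and closed on $H^1(M)$, and hence defines a unique semibounded self-adjoint operator $H_V$ on $L^2(M)$, with $\Dom(\sqrt{H_V+c})=H^1(M)$ for $c$ large enough.

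For the form-core statement I would fix the constant $C_0=C_{1/2}$ from the $\epsilon=\tfrac12$ case above and put $\|u\|_+^2:=q_V(u,u)+C\|u\|_2^2$ for a fixed $C>C_0$. Then
\[
\tfrac12\int_M|\nabla_g u|^2\,dx+(C-C_0)\|u\|_2^2\ \le\ \|u\|_+^2\ \le\ \tfrac32\int_M|\nabla_g u|^2\,dx+(C+C_0)\|u\|_2^2,
\]
so $\|\cdot\|_+$ is equivalent to the standard $H^1(M)$-norm. Since $C^\infty(M)$ is dense in $H^1(M)$ for the latter, it is dense for $\|\cdot\|_+$, i.e.\ it is a form core for $q_V$.

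The only non-bookkeeping ingredients are the critical Sobolev embedding on the compact manifold and the identification $\Dom(\sqrt{-\Delta_g+1})=H^1(M)$; both are standard, so I do not anticipate a serious obstacle. I would emphasize that no smallness of $\|V\|_{L^{n/2}(M)}$ is required here: it is the approximation \eqref{2.15} of $V$ by bounded potentials that drives everything, exactly as in the splitting used repeatedly in \S~\ref{abstractsec}.
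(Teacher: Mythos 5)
Your proof is correct and reaches the same conclusion via a genuinely different route for the central form-boundedness estimate. The paper establishes \eqref{s1} by \emph{spatial localization}: it fixes a maximal $\delta$-separated net $\{x_j\}$, covers $M$ by the balls $B_j=B(x_j,\delta)$ with bounded-overlap enlargements $B_j^*$ and smooth cutoffs $\phi_j$, applies H\"older and the Sobolev inequality on each patch, and uses that $\sup_{x}\|V\|_{L^{n/2}(B(x,2\delta))}\to 0$ as $\delta\to 0$ (uniform absolute continuity of $|V|^{n/2}\,dx$ on the compact $M$) to make the leading coefficient small. You instead \emph{truncate $V$ in its range}, writing $V=V_{\le N}+V_{>N}$ as in \eqref{2.13}--\eqref{2.15}, absorbing $V_{\le N}$ trivially into the $L^2$ term and controlling $V_{>N}$ by a single application of H\"older together with the global embedding $H^1(M)\hookrightarrow L^{2n/(n-2)}(M)$, using that $\|V_{>N}\|_{L^{n/2}}\to 0$ as $N\to\infty$. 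Both schemes exploit the same underlying fact that an $L^{n/2}$ potential on a finite-measure space can be approximated by bounded potentials in $L^{n/2}$; yours is a bit more elementary and meshes with the decomposition used repeatedly in \S~\ref{abstractsec}, while the paper's covering argument avoids invoking a global Sobolev constant and is the version that would carry over to non-compact settings. A small further difference: you spell out the form-core claim explicitly (equivalence of the form norm $\|\cdot\|_+$ with the $H^1(M)$ norm plus density of $C^\infty(M)$ in $H^1(M)$), whereas the paper leaves this step implicit in the KLMN theorem, so your write-up is actually the more complete of the two. Note also that because both proofs bound $\int|V|\,|u|^2$, neither is sensitive to the sign in front of $V$ in \eqref{s0}.
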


\begin{proof}
Since $(-\Delta_g+1)^{1/2}$ is self-adjoint, by perturbation theory (specifically the KLMN Theorem
(see \cite[Theorem X.17]{reed1975ii}) it suffices to prove that for any $0<\e<1$ there is a constant
$C_\e<\infty$ so that
\begin{equation}\label{s1}
\int |V| \, |u|^2 \, dx \le \e \, \bigl\|(-\Delta_g+1)^{1/2}u\bigr\|_2^2 +C_\e\|u\|_2^2, \quad
u\in \Dom(\sqrt{H_0}),
\end{equation}
where $H_0=-\Delta_g+1$.

To prove this, for each small $\delta>0$ choose a maximal $\delta$-separated collection of points
$x_j\in M$, $j=1,\dots,N_\delta$, $N_\delta\approx \delta^{-n}$.  Thus, $M=\cup B_j$ if $B_j$ is the $\delta$-ball about
$x_j$, and if $B_j^*$ is the $2\delta$-ball with the same center,
\begin{equation}\label{s2}
\sum_{j=1}^{N_\delta} {\bf 1}_{B^*_j}(x)\le C_M,
\end{equation}
where $C_M$ is independent of $\delta \ll 1$ if ${\bf 1}_{B_j^*}$ denotes the indicator function of $B^*_j$.  Since
$V\in L^{n/2}(M)$, for any fixed $\e$, we can choose $\delta>0$ small enough so that
\begin{equation}\label{s3}
C_M\Bigl(\, C_0\sup_{x\in M}\|V\|_{L^{n/2}(B(x,2\delta))}\, \Bigr) <\e,
\end{equation}
where $C_0$ is the constant in  \eqref{s4} below.

Now for each $B_j$, define a smooth bump function $\phi_j$ with $\phi_j\equiv 1$ on $B_j$, and $\phi_j\equiv 0$ outside on $B^*_j$.
Since $M=\cup B_j$, we have
\begin{equation}\label{s4}
\begin{aligned}
\int |V| \, |u|^2 \, dx &\le \sum_j \int |V| \, |\phi_ju|^2 \, dx \\
&\le (\sup_{x\in M}\|V\|_{L^{n/2}(B(x,2\delta))})\sum_j \|\phi_ju\|^2_{\frac{2n}{n-2}} \\
&\le  C_0(\sup_{x\in M}\|V\|_{L^{n/2}(B(x,2\delta))})\sum_j \|\nabla (\phi_ju)\|^2_{2} \\
&\le C_0(\sup_{x\in M}\|V\|_{L^{n/2}(B(x,2\delta))})\sum_j \Big( \|\nabla (u)\|^2_{L^2(B_j^*)}+ \|(\nabla\phi_j)u)\|^2_{2}\Big)\\
&\le \e \, \bigl\|(-\Delta_g+1)^{1/2}u\bigr\|_2^2 +C_\e\|u\|_2^2, \quad 
u\in \Dom(\sqrt{H_0}),
\end{aligned}
\end{equation}
where $H_0=-\Delta_g+1$.   Here we have used Sobolev estimates as well as \eqref{s3}.
\end{proof}

If $u\in \Dom(\sqrt{-\Delta_g+1})$ then $-\Delta_gu$ and $Vu$ are both distributions.  If $H_V$ is the self-adjoint
operator given by the Proposition, then $\Dom(H_V)$ is all such $u$ for which $-\Delta_gu+Vu\in L^2$. 

If we take $\e=1/2$ in \eqref{s1}
we indeed get for large enough $N$
\begin{multline}\label{s5}
\|\sqrt{-\Delta_g+1}u\|_2^2 =\int (-\Delta_g+1)u \, \overline{u} \, dy
\le 2\int(-\Delta_g+V+N)u\, \overline{u}\, dy
\\
=2\bigl\|\sqrt{H_V+N}u\bigr\|_2^2, \quad \text{if } H_V=-\Delta_g+V.
\end{multline}
Thus, $(-\Delta_g+1)^{1/2}(H_V+N)^{-1/2}$ and $(H_V+N)^{-1/2}(-\Delta_g+1)^{1/2}$ are bounded on $L^2$.
Since $(-\Delta_g+1)^{-1/2}$ is a compact operator on $L^2$, so must be $(H_V+N)^{-1/2}$.
From this we conclude that the self-adjoint operator $H_V$ has {\em discrete spectrum}. 

A combination of Sobolev estimtes for the unperturbed operator and \eqref{s5} also gives us 
\begin{equation}\label{s51}
\|u\|_{\frac{2n}{n-2}}\leq C\bigl\|\sqrt{H_V+N}u\bigr\|_2,\,\,\,\,\text{if} \,\,\, u\in \text{Dom}(H_V).
\end{equation}


Note that in the above inequality \eqref{s4} and thus \eqref{s5}, we need the condition that $n\geq 3$, because we do not have 
a suitable Sobolev 
inequality at $\frac{2n}{n -2}$ when $n=2$. Additionally, if $n\geq 5$,  by an analogous argument as in \eqref{s4}, we have for any $0<\e<1$ there is a constant
$C_\e<\infty$ so that
\begin{equation}\label{s6}
\int |Vu|^2 \, dx \le \e \, \bigl\|(-\Delta_g+1)u\bigr\|_2^2 +C_\e\|u\|_2^2, \quad
u\in \Dom(H_0),
\end{equation}
where $H_0=-\Delta_g+1$.

Inequality \eqref{s6} also appears in \cite[Theorem X.21]{reed1975ii} under a weaker assumption on $V$.  The reason it does not hold when $n=3, 4$ is that
we do not have an appropriate Sobolev 
inequality at $\frac{2n}{n -4}$ when $n=3, 4$. As a consequence of \eqref{s6},  we have for large enough N
\begin{equation}\label{s7}
C_1\|(-\Delta_g+1)u\|_2 \le \bigl\|(H_V+N)u\bigr\|_2\le C_2\|(-\Delta_g+1)u\|_2 , \quad \text{if } H_V=-\Delta_g+V.
\end{equation}

After replacing $V$ by $V+N$ to simplify the notation, we may assume, as we have throughout starting with \eqref{1.3}, that
\eqref{s4} holds with $N=0$.   This just shifts the spectrum and does not change the eigenfunctions.  In this case the spectrum of $H_V$ is positive and its eigenfunctions therefore
are distributional solutions of
$$H_V e_\la =\la^2 e_\la, \quad \text{some }\, \la>0,$$
which means here that $\la$ is the eigenvalue of the ``first order'' operator $\sqrt{H_V}$, i.e.,
\begin{equation}\label{ef}
P_V e_\la =\la e_\la, \quad \text{if } \, \, P_V=\sqrt{H_V}.
\end{equation}


%
%

\bibliography{refs9}
\bibliographystyle{abbrv}
\end{document}